\newtheorem{theorem}{Theorem}[section]
\newtheorem{lemma}[theorem]{Lemma}
\newtheorem{proposition}[theorem]{Proposition}
\newtheorem{example}[theorem]{Example}
\theoremstyle{definition}
\newtheorem{definition}[theorem]{Definition}
\newtheorem{remark}[theorem]{Remark}
\numberwithin{equation}{section}
\newcommand{\X}{{\mathfrak X}}
\renewcommand{\H}{{\mathcal H}}
\def\C{\mathbb C}
\def\R{\mathbb R}
\def\S{{\mathcal S} }
\def\C{\mathbb C}
\def\R{\mathbb R}
\def\I{\mathbb I}
\def\N{\mathbb N}
\def\al{\alpha}
\def\be{\beta}
\def\de{\delta}
\def\rh{\rho}
\def\et{\eta}
\def\GA{\Gamma}
\def\ve{\varepsilon}
\def\la{\lambda}
\def\OM{\Omega}
\def\va{\varphi}
\def\ta{\tau}
\def\va{\varphi}
\def\g{\mathfrak{g}}
\def\h{\mathfrak{h}}
\def\p{\mathfrak{p}}
\def\f{\mathfrak{f}}
\def\z{\mathfrak{z}}
\def\g{\mathfrak g}
\def\h{\mathfrak h}
\def\la{\lambda}
\def\ve{\varepsilon}
\def\si{\sigma}
\def\ga{\gamma}
\def\ph{\phi}
\def\ch{\chi}
\def\ta{\tau}
\def\ps{\psi}
\def\N{\mathbb{N}}
\def\Z{\mathbb{Z}}
\def\R{\mathbb{R}}
\def\C{\mathbb{C}}
\def\ol#1{\overline{#1}}
\def\nn{\nonumber}
\def\noop#1{\Vert #1\Vert_{\rm op}}
\def\R{{\mathbb R}}
\def\C{{\mathbb C}}
\def\N{{\mathbb N}}
\def\Z{{\mathbb Z}}
\def\T{{\mathbb T}}
\def\I{{\mathbb I}}
  \def\Id{{\mathbb I}}
\def\B{{\mathcal B}}
\def\F{{\mathcal F}}
\def\H{{\mathcal H}}
\def\P{{\mathcal P}}
\def\K{{\mathcal K}}
\def\X{{\mathcal X}}
\def\O{{\mathcal O}}
\def\Ad{{\text Ad}}
\def\tr{{\text tr}}
\def\iy{\infty}
\def\ol#1{\overline{#1}}
\def\hb#1{\hbox{#1}}
\def\val#1{\vert #1\vert}
\def\no#1{\Vert #1\Vert }
\def\wh#1{\widehat{#1}}
\def\exp#1{{\rm exp} #1}
\def\ind#1#2{\hb{ind}_{#1}^{#2}}
\def\ker#1{\hb{ker}(#1)}
\def\res#1{_{\vert #1}}
\def\inv{^{-1}}
\def\es{\emptyset}
\def\hb#1{\hbox{#1}}
\def\val#1{\vert #1\vert}
\def\ker#1{\hb{ker}(#1)}
\def\dim#1{\hb{dim}(#1)}
\def\L1#1{L^1(#1)}
\def\L#1#2{L^{#1}(#2)}
\def\l#1#2{L^{#1}(#2)}
\def\ti{\times }
\def\lef({\left(}
\def\rig){\right)}
\author{ Hedi REGEIBA\and Jean LUDWIG  }
\address{
Universit\'e de Lorraine\\
Institut Elie Cartan de Lorraine\\
UMR 7502, Metz, F-57045, France.}
\address{
CNRS\\
Institut Elie Cartan de Lorraine\\
UMR 7502, Metz, F-57045, France.}
\email{rejaibahedi@gmail.com.}
\email{jean.ludwig@univ-lorraine.fr.}
\keywords{5 dimensional  nilpotent Lie groups, $ C^* $-algebras of Lie groups, algebras of operator fields, Fourier transform}
\subjclass{22D25, 22D10, 46L45}
\begin{document}
\title[$C^\ast$-alg with no con du lim and nil Lie gro.] {  $C^\ast$-algebras with norm controlled dual limits and   nilpotent Lie groups.}

\date{}    

\maketitle  

\begin{abstract}
Motivated by the description of the $ C^* $-algebras of $5$ dimensional nilpotent Lie groups as algebras 
of operator fields defined over their spectra, we introduce the family 
of    $C^*$-algebras with norm controlled dual limits  and we  show that  the $C^*$-algebras of the 
$5$ dimensional nilpotents Lie  groups  belong to this class. 
\end{abstract}

\section{Introduction.}\label{intro}
\subsection{}

In recent papers,  the $ C^*$-algebra of the Heisenberg groups,
of  threadlike groups and $ ``ax+b'' $-like groups
 have been described as algebras of operator fields (see  
\cite{Lud-Tur} and  \cite{Lin-Lud}). For this description a precise understanding of the topology of the spectrum  
of these groups was essential (see for instance \cite{Arc-Lud-Sch} for the case of threadlike groups).
In this paper we study  the group $ C^* $-algebra of all
connected nilpotent Lie groups of dimension $ \leq 5 $ as algebra
of operator fields. This family  of Lie
groups has been classified by several authors, a list can be found for instance   in \cite{Nie}. It contains the Heisenberg groups
of dimensions 3 and 5 and also the threadlike groups $ F_4$ and  $F_5$ . There are $6$ simply connected   nilpotent un-decomposable 
Lie  groups of dimension $5$. 
Thanks to Kirillov's orbit picture of the spectrum of a connected simply
connected nilpotent Lie group,  we have an  description of the spectrum of these groups in terms of the structure of the space of its co-adjoint orbits. But the orbit theory is only an algorithm, it does not 
 give us  any details about the  result of computations. The topology of the orbit
space  or the behaviour of the operators $ \pi(F),\  F\in C^*(G)
$ as $ \pi  $ varies in the spectrum is different for each of these groups and must be studied case by case.\\ 
The  paper begins  with  section $ 2$, where some definitions, methods and results are presented which are needed in the sequel.
In section 3,  a  family of $C^*-$algebras, which we call $ C^* $-algebras with norm controlled dual limits (see Definition \ref{norcontspec}) is introduced. This is a family of separable $ CCR $-algebras $ A $, for which   there exists a 
finite increasing family $ S_0\subset S_1\subset\ldots\subset S_d=\wh A  $ of closed 
subsets of the spectrum $ \wh A $ of $ A $, such that for $ i=1,\cdots, d, $ the subsets $\GA_0=S_0$ and  $ \GA_i:=S_i\setminus S_{i-1} $ 
have separated   relative topologies and which have the property that for every converging sequence 
$ \ol\ga=((\ga_k,\H_k))_k \subset S_i$ with limit set $ L(\ol\ga)\subset S_{i-1} $ there exists a sequence $ (\tilde \si_{\ol\ga,k})_k:CB(S_{i-1})\mapsto B(\H_{k}) $ (here $ CB(S_{i-1}) $ denotes the $ C^* $-algebra of continuous bounded operator fields defined over $S_{i-1} $) of linear mappings, which is uniformly bounded in $ k $, such that for every   $ a\in A $  we have that $ \lim_k \noop{\ga_k(a)-\tilde\si_k(a)}=0 $. These $ C^* $-algebras  are then completely determined by the topology of their spectra (in particular by the limit sets $ L(\ol\ga) $ of properly converging sequences in $ \wh A $)  and these mappings $ (\tilde\si_{\ol\ga,k})_k $ 
(see Theorem \ref{aisdsta}).

We then study the 6 groups of dimension $\leq  $ 5 case by case and we show that all of them have $C^* $-algebras with norm controlled dual limits.
For the Heisenberg  and the threadlike groups this  has already be shown in the paper \cite{Lud-Tur}.
There remains then only the 4 groups $G_{5,2}, G_{5,3},G_{5,4} \text{ and }G_{5,6}$, which are treated separately in 
the sections 6,7,8 and 9. Since the structure of the dual space of these groups are different for each of them, we must 
determine the topology of $\wh G $ group by group and  construct  by hand for every limit set $\ol\O $ of a properly converging 
sequence in $\g^*/G $ these essential mappings $\si_{\ol\O,k} $.

To understand these mappings $\si_{\ol\O,k} $, one has to recall a theorem of Fell, (see  \cite{Fell-1}), 
where he shows that in the case of a properly converging net $\ol\O=(\pi_k)_k\subset \widehat A $ of a $C^* $ -algebra $A $, with limit set $L $, one has that 

\begin{eqnarray*}
 \lim_k\noop{\pi_k(a)}=\sup_{\pi\in L}\noop{\pi(a)}, a\in A.
 \end{eqnarray*}
To implement that theorem we need the mappings $\si_{\ol\O,k} $. We shall construct for our limit sets $L $ for every $k\in\N,  $ an increasing sequence of  countable subsets $L_k $ of $L $ and a sequence of positive numbers $\ve_k $ such that $\lim_k \ve_k =0 $ and such that $\bigcup_k L_k $ is dense in $L $. Furthermore for every $\pi\in L_k $ and $k\in\N $, we shall find an orthogonal projection  $P_{k,\pi} $ on the Hilbert space $ \H_k$ of $\pi_k $, such that $\sum_{\pi\in L_k} P_{k,\pi}=\Id_{\H_k} $ and a linear  mapping $U_{k,\pi}:\H_{\pi}\to \H_{\pi_k} $ such that 
\begin{eqnarray*}
 \sum_{\pi\in L_k}\noop{P_{k,\pi}\circ \pi_k(a)\circ P_{k,\pi}-U_{k,\pi}\circ\pi(a)\circ U_{k,\pi}^*}\leq \ve_k \no a, a\in C^*(G). 
 \end{eqnarray*}
 This results for the 5 dimensional groups  lead to the following question.
 
 Do  the $C^* $-algebras of connected nilpotent Lie groups have all this property of norm controlled dual limits?
 
 In several  forthcoming papers it will be shown that the answer to this question is yes for all groups of dimension 6.
\section{Preliminaries.}

\subsection{Orbit picture.}\label{kirrth}
\rm   Kirillov's orbit theory for a connected simply connected nilpotent Lie  group $ G $ tells
us that for every irreducible unitary representation $\pi$ of $G$ there   exists
an $\ell\in\g^*$ and a  polarization $\p\subset\g$ at $\ell$ (i.e. a subalgebra $ \p $ of $ \g $ of dimension 
$ d=\frac{\dim\g+\dim{\g(\ell)}}{2} $ with the property $ \langle{\ell },{[\p,\p] }\rangle=\{0\} $) such that
$\pi$ is equivalent to the induced representation
$\pi_{\ell,\p}=ind_P^{G}\chi_{\ell}$ of the unitary character
$\chi_{\ell}=e^{-2\pi i\ell\circ\log|_P}$  from  $P=\exp(\p)$ to $ G $. Furthermore for two
linear functionals $\ell,\ell'$ on $\g$  and the Pukanszky polarizations $\p$ at
$\ell$ $($ resp $\p'$ at $\ell')$, the representations $\pi_{\ell,\p}$ and
$\pi_{\ell',\p'}$ are equivalent if and only if $\ell$ and $\ell'$   are
contained in the same $G$-orbit (see \cite{Cor-Gre}). Let $[\pi]$ denote the unitary equivalence class of a unitary 
representation $\pi$ of $G$. The Kirillov  map $$K:\g^*/G\to\widehat G;
\Ad^*(G)\ell\to[\pi_{\ell,\p}]$$ is a  homeomorphism of the orbit space $ \g^*/G $ onto
$\widehat{G}$ (see \cite{Lep-Lud}).
\subsection{Some definitions and results.}\label{tools}
We indicate here some definitions, methods and results, which will be needed in the sequel.

\begin{enumerate}\label{}
\item  
\begin{definition}
\rm 
Let $ H=\exp(\h) $ be a closed connected subgroup of a connected nilpotent Lie  group $ G=\exp (\g) $ and let $
\ch_\ell:H\to \T;\ \ch_\ell(h)=e^{-2\pi i \langle\ell,\log(h)\rangle}, h\in H(\ell\in \g^*)$, be a unitary character of $ H $. The quotient space $ G/H
$ 
has a unique left invariant measure $ d\dot g $. With this measure we can
define 
the Hilbert space $ \H=\H_{H,\ell}=\l2{G/H,\ell} $  by
\begin{eqnarray}\label{}
\nn \l2{G/H,\ell}&:=&\{\xi:G\to\C, \xi \textrm{ mesurable, }
\xi(gh)=\ch_\ell(h\inv)\xi(g), h\in H,\ g \in G,\\
\nn &&\no \xi_2^{2}:=\int_{G/H}\val{\xi(g)}^{2}d\dot g<\iy\}. 
\end{eqnarray}
The group $ G $ acts by left translation on this space and defines  a unitary 
representation 
\begin{eqnarray}\label{}
\nn \si_{\ell,\h}(g)\xi(u)&:=&\xi(g\inv u), \xi\in\l2{G/H,\ell},g, u\in G 
\end{eqnarray}
called the induced representation of $ \ch_\ell $ (from $ H $ to $ G $).

If $\h$ is  polarization at $\ell$, then the representation $\si_{\ell,\h}$ is irreducible and we denote it sometimes by $\pi_{\ell,\h}$.
If we take a Malcev basis $\B=\{X_1,\cdots, X_d\}$ of $\g$ relative to $\h$, which means that the 
subspaces $\g_j:=\text{span}\{X_j,\cdots, X_d,\h\}$ is  a subalgebra of $\g$ and that $\g=\oplus _{j=1}^d \R X_j\oplus \h$ is a direct sum, 
then the mapping $E_\B:\R^d\ti H\mapsto G, E_\B(t_1,\cdots, t_d):=\exp(t_1 X_1)\cdots \exp (t_d X_d)h$ is  a diffeomorphism and it allows us 
to identify the Hilbert space  $L^2(G/H,\ell)$ with the space $L^2(\R^d)$. We shall consider in the following pages always the 
representations $\si_{\ell,\h}$ as representations on the space $L^2(\R^d)$.
\end{definition}
\rm  It is well known that  for $ F\in L^{1}(G) $ the operator $ \pi_{\ell,\h} $ 
is a kernel operator with kernel function
\begin{eqnarray}\label{}
\nn F_{\ell,\h}(s,t)&=&\int_H F(sht\inv)\ch_\ell(h)dh, s,t\in G. 
\end{eqnarray}
If $ H $ is  a normal subgroup of $ G $, then the function $ F_{\ell,\h} $ can be written as
\begin{eqnarray*}
F_{\ell,\h}(s,t)=\wh F^{\h}(st\inv, \Ad^*(t) \ell\res\h), s,t\in G,
\end{eqnarray*}
where
\begin{eqnarray*}
\wh F^\h(s,q):=\int_H  F(sh)e^{-2\pi i \langle{q},{\log (h)}\rangle}dh, s\in G, q\in \h^*.
\end{eqnarray*}

 We denote for a normal subgroup $H=\exp(\h)$ by $L_c^1=L^1_{c,\h}$ the subspace of $L^1(G)$ consisting of all $F$'s in
$L^1(G)$ for which $\wh F^\h\in C_c^{\iy}(G/H\times \h^*)$. This space $L_c^1$ is dense in $L^1(G)$ and
hence it is also dense in $C^*(G)$.
The functions $\wh F^\h$ satisfy the  covariance condition
\begin{eqnarray}\label{covacon}
 \nn\wh F^\h(sh,q) &= & \ch_q(h\inv)\wh F^\h(s,q), s\in G, h\in H, q\in \h^*.
\end{eqnarray}

The vector space $L^1_c$ is of course dense in $L^1(G)$ and hence also in $C^*(G)$.

 Since for every $F\in L^1_c$ the  function $\wh F^\h $ is smooth with compact support on $G/H\ti \h^*$,  there exists  a function $\va\in C_c(G/H)$ such that 
\begin{eqnarray*}
 \val{\wh F^\h(s,q)}\leq \no{q}\val{\va(s)}, s\in G, q\in\h^*.
 \end{eqnarray*} 
\item  We shall often use in the sequel  the following fact.

\rm Let $ F:\R^d\ti\R^d\to \C $ be a smooth function with compact support for which there exists some continuous function $ \va:\R^d\to \R_+ $ with compact support, such that 
\begin{eqnarray*}
  \val {F(x,y)}\leq \va(x-y), x,y\in\R^d .
 \end{eqnarray*}
Then by Young's inequality, for  the kernel operator $ T_F $ defined on $ L^2(\R^d) $ by
\begin{eqnarray*}
T_F(\xi)(x):=\int_{\R^d}F(x,y)\xi(y)dy \text{ for } \xi\in L^2(\R^d), x\in\R^d,
\end{eqnarray*}
its operator norm $ \noop{T_F} $ is bounded by the $ L^1 $-norm $ \no \va_1 $ of $ \va $.

\item  Let $G$ be  a second countable locally compact group with  a continuous action by homeomorphisms $G\ti S\to S, (g,s)\to g\cdot s$  on a second countable locally compact space $S$. Denote for $s\in S$ the stabilizer  of $s$  in $G$ by $G_s$.

Let $\ol\O=(\O_k)$ be  a sequence of $G$-orbits in $S$.  We say that $\ol\O$ \textit{converges with multiplicity 1} to the $G$-orbit $\O$, if there exists for every $k\in\N$ an element $s_k\in \O_k$ and an $s\in \O$, such that $\lim_k s_k=s$ and such that for any compact subset $K$ of $ S$, for which  the intersection of the interior $\dot K$ of $K$ with $G\cdot s$ is not empty,   there exists a compact subset $C\subset G$, such that for $k$ large enough, for any  $g\not\in C G_{s_k}$ we have that $g\cdot s_k\not \in K $.
 
\item Let $\g $ be a nilpotent Lie algebra. We fix an euclidean  scalar product on $\g $.  
Let $(\ell_k)_{k\in\N} $ be a converging sequence in $\g^* $ with limit $\ell $. 
Let $(\h_k)_k $ be a sequence of subalgebras of $\g $, such that $\dim{\h_k}=n-d, k\in\N,   $ for some $d\in\N^* $, and such that $\langle{\ell_k},{[\h_k, \h_k]}\rangle=\{0\}, k\in\N $. 
We pick for every $k\in\N $ an orthonormal  Malcev basis of $\B_k=\{Z_1^{k},\cdots, Z_n^k $ $ \} $ such that $\h_k=\text{span}\{Z^k_{n-d},\cdots, Z_n^k\} $ and we can assume (passing if necessary to a subsequence), that the vectors $Z^k_j $ converge for every $j=1,\cdots, n $ to a vector $Z_j $. 
Then $\B:=\{Z_1,\cdots, Z_n\} $ is an orthonormal Malcev basis of $\g $ which passes through $\h:=\text{span}\{Z_{n-d},\cdots, Z_n\} $. Then $\h $ is  subalgebra of $\g $ and  $\langle{\ell},{[\h,\h]}\rangle=\{0\} $.
Let $H_k:=\exp({\h_k}), k\in  \N $ and $H=\exp(\h) $. Let $\si_k:=\si_{\ell_k,\h_k} $  and 
$\si=\si_{\ell,\h} $. This gives us the following 
\begin{proposition}\label{weakcon}
The representations $\si_k $ converge weakly to the representation $\si $. This means that for every $\xi,\et \in L^2(\R^d) $, for every $a\in C^*(G) $:
\begin{eqnarray*}
 \lim_{k\to\iy}\langle{\si_k(a)\xi},{\et}\rangle=\langle{\si(a)\xi},{\et}\rangle.
 \end{eqnarray*}
 \end{proposition}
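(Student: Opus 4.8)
The plan is to reduce, by density together with the uniform contractivity of the representations, to the weak convergence of the $\si_k$ evaluated at a single group element, and then to compute that limit explicitly in the Malcev coordinates attached to the bases $\B_k$.

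First I would record that, since $\si_k$ and $\si$ are representations of $C^*(G)$, we have $\noop{\si_k(a)}\le\no a$ and $\noop{\si(a)}\le\no a$ for all $a\in C^*(G)$ and all $k$. Because $C_c(G)$ is dense in $C^*(G)$ and $C_c(\R^d)$ is dense in $L^2(\R^d)$, a standard $\ve$-argument using these uniform bounds reduces the statement to proving $\lim_k\sp{\si_k(F)\xi}{\et}=\sp{\si(F)\xi}{\et}$ for $F\in C_c(G)$ and $\xi,\et\in C_c(\R^d)$: approximating a general $a$ by such an $F$ with $\no{a-F}<\ve$, and $\xi,\et$ by $C_c$-functions, all the error terms are bounded uniformly in $k$ by a constant multiple of $\ve$.

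Next, for $F\in C_c(G)$ I would write $\sp{\si_k(F)\xi}{\et}=\int_G F(g)\,\sp{\si_k(g)\xi}{\et}\,dg$. As $|\sp{\si_k(g)\xi}{\et}|\le\no\xi\no\et$ and $F\in L^1(G)$, the integrand is dominated by the $k$-independent integrable function $|F(g)|\no\xi\no\et$, so dominated convergence reduces the matter to the pointwise statement $\lim_k\sp{\si_k(g)\xi}{\et}=\sp{\si(g)\xi}{\et}$ for each fixed $g\in G$; this group-element route is preferable to a direct kernel computation, which would run into domination problems near the (noncompact) diagonal. To make it explicit, set $\ga_k(x):=\exp(x_1Z_1^k)\cdots\exp(x_dZ_d^k)$ for $x\in\R^d$, the cross-section attached to $\B_k$; under the identification of $\l2{G/H_k,\ell_k}$ with $L^2(\R^d)$ the induced action reads $(\si_k(g)\xi)(x)=\ch_{\ell_k}(h_k(g,x)\inv)\,\xi(A_k(g,x))$, where $g\inv\ga_k(x)=\ga_k(A_k(g,x))\,h_k(g,x)$ is the decomposition furnished by the diffeomorphism $E_{\B_k}$, with $A_k(g,x)\in\R^d$ and $h_k(g,x)\in H_k$. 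Hence
\begin{eqnarray*}
\sp{\si_k(g)\xi}{\et}=\int_{\R^d}\ch_{\ell_k}(h_k(g,x)\inv)\,\xi(A_k(g,x))\,\ol{\et(x)}\,dx,
\end{eqnarray*}
and the analogous formula holds for $\si$ with $\ga,A,h$ in place of $\ga_k,A_k,h_k$.

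Finally I would pass to the limit in this last integral, and this is where the hypotheses on the bases enter. Since $G$ is nilpotent, $E_{\B_k}$ and its inverse are polynomial maps whose coefficients depend continuously on the basis $\B_k$; as $Z_j^k\to Z_j$ for every $j$, the section $\ga_k$ converges to $\ga$ and the decomposition data converge, $A_k(g,x)\to A(g,x)$ and $\log h_k(g,x)\to\log h(g,x)$, locally uniformly in $x$. Combined with $\ell_k\to\ell$ and with the fact that $\ch_{\ell_k}$, $h\mapsto e^{-2\pi i\sp{\ell_k}{\log h}}$, is a genuine unitary character on $H_k$ (using $\sp{\ell_k}{[\h_k,\h_k]}=\{0\}$), the integrand converges for every $x$ to $\ch_\ell(h(g,x)\inv)\,\xi(A(g,x))\,\ol{\et(x)}$. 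For $\xi,\et\in C_c(\R^d)$ it is bounded in modulus by the fixed integrable function $\no\xi_\infty\,|\et(x)|$, so a last application of dominated convergence yields the pointwise convergence of the matrix coefficients, and, retracing the reductions, the proposition. The one genuinely technical point is the locally uniform convergence of the decomposition data $A_k,h_k$ to $A,h$, which rests on the polynomial nature of the coordinate maps of a nilpotent group and the continuous dependence of their coefficients on the converging bases; granting it, the three nested dominated-convergence and density steps assemble into the claim.
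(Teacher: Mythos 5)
Your argument is correct, but it takes a different middle route from the paper's. The paper works directly with the operator $\sigma_k(F)$ for $F\in C_c(G)$, realized as a kernel operator: the matrix coefficient $\langle\sigma_k(F)\xi,\eta\rangle$ becomes a single triple integral over $\R^d\times\R^d\times\R^{n-d}$ of the functions $f_k(u,v,h)=\xi(u)F(E_k(u)E_{k,d}(h)E_k(v)^{-1})\overline{\eta(v)}$, and one application of dominated convergence finishes the job once one observes that the supports of all the $f_k$ lie in a \emph{common} compact set (this is the paper's version of your ``technical point''; it again rests on the locally uniform convergence $E_{\B_k}\to E_\B$ of the polynomial coordinate maps). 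You instead disintegrate $\sigma_k(F)=\int_G F(g)\sigma_k(g)\,dg$, use one dominated convergence over $G$ to reduce to a fixed group element, and then a second one over $\R^d$ for the matrix coefficient of $\sigma_k(g)$, written via the decomposition $g^{-1}\gamma_k(x)=\gamma_k(A_k(g,x))h_k(g,x)$. Both proofs ultimately hinge on the same fact about convergence of the Malcev coordinate data, and both are valid; yours trades the paper's single compact-support observation for two nested limit interchanges plus the convergence of $A_k,h_k$. One remark: your stated reason for avoiding the kernel computation --- ``domination problems near the noncompact diagonal'' --- does not actually arise, because $\xi$ and $\eta$ have compact support in $\R^d$ and $F$ has compact support in $G$, which together confine the variable $h$ to a common compact subset of $\R^{n-d}$ for large $k$; so the paper's single dominated-convergence step is perfectly sound, and if anything slightly shorter than your two-step version. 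Finally, you should make explicit (as the paper does implicitly through its identification $L^2(G/H_k,\ell_k)\simeq L^2(\R^d)$) that the Malcev section carries the invariant measure on $G/H_k$ to Lebesgue measure, so that your formula for $\sigma_k(g)$ really is unitary on $L^2(\R^d)$; this is standard for nilpotent groups but is the one unspoken ingredient in your computation.
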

\begin{proof}Take first $F\in C_c(G) $ and $\xi,\et\in C_c(\R^d) $. 
We have then that:
\begin{eqnarray*}
 \langle{\si_k(F)\xi},{\et}\rangle&=&\int_{\R^d}\int_{\R^d} \xi(u)F_k(u,v)\ol{\et}(v)du dv,
\end{eqnarray*}
where
\begin{eqnarray*}
 F_k(u,v):=\int_{\R^{{n-d}}}F(E_{k}(u)E_{k,d}(h)E_k(v)\inv)\ch_k(E_{k,d}(h))dh, u,v\in\R^n
 \end{eqnarray*}
and where $E_k=E_{\B_k},E_{k,d}=E_k{\res {\R^{n-d}}},k\in\N $. 
We get  similar expression for $k=\iy $, i.e. for $E_\B $ etc. 
It is easy to see that the  supports of the functions $f_k:\R^{d}\ti\R^d\ti \R^{{n-d}}  $ defined by
\begin{eqnarray*}
 f_k(u,v,h):=\xi(u)F(E_{k}(u)E_{k,d}(h)E_k(v)\inv)\ol{\et(v)},
 \end{eqnarray*}
are contained in a common compact set and that the functions  converge point-wise to the  function
\begin{eqnarray*}
 f(u,v,h):=F(E(u)E_{d}(h)E(v)\inv)\ch(E_{d}(h)),
 \end{eqnarray*}
where $E_d=E_{\B}{\res{\R^{n-d}}}.$ Therefore by Lebegue's theorem of dominated convergence we have that
\begin{eqnarray*}
 \lim_{k\to\iy}\langle{\si_k(F)\xi},{\et}\rangle&=&\langle{\si(F)\xi},{\et}\rangle.
\end{eqnarray*}
The proposition now follows from the density of $C_c(G) $ in $C^*(G) $ and the density of $C_c(\R^d) $ in $L^2(\R^d) $. 
\end{proof}
\begin{theorem}\label{conmult1}
Let $(\pi_k)_k\subset\wh G $ be a sequence  which converges with multiplicity 1 to a single limit point $\pi $. Then there exists  
a subsequence (also indexed by $k $ for simplicity of notations), a Hilbert space $\H$,  for every $k\in\N $ a concrete realization $(\si_k,\H) $ of $\pi_k $ and a concrete realization $(\si,\H) $ of $\pi $, such that 
\begin{eqnarray*}
 \lim_k \noop{\si_k(a)-\si(a)}=0, a\in C^*(G).
 \end{eqnarray*}

 \end{theorem}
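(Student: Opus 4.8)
The plan is to upgrade the weak convergence of Proposition \ref{weakcon} to norm convergence by exploiting the multiplicity-1 hypothesis, which controls how much "mass" of the orbits $\O_k$ escapes to infinity. The guiding principle is this: for a net of representations converging with multiplicity 1, the operators $\pi_k(a)$ cannot spread out — the kernels $F_k(u,v)$ concentrate near the limiting kernel, and the multiplicity-1 condition prevents extra contributions from group elements $g\notin CG_{s_k}$ from polluting the limit. First I would set up the concrete realizations: using the convergent orthonormal Malcev bases $\B_k\to\B$ through the subalgebras $\h_k\to\h$ supplied in item (4) of the preliminaries, I identify every $L^2(G/H_k,\ell_k)$ and $L^2(G/H,\ell)$ with the single Hilbert space $\H=L^2(\R^d)$, so that $\si_k$ and $\si$ all act on the same space.

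**Next I would reduce to a dense subalgebra.** Since norm estimates survive uniform limits, it suffices to prove $\lim_k\noop{\si_k(a)-\si(a)}=0$ for $a=F$ ranging over a dense subset such as $C_c(G)$, provided I obtain a bound that is uniform over $F$ in terms of $\no{F}_{C^*(G)}$ — but in fact the cleanest route is to establish the estimate directly on $C_c(G)$ and then argue that the difference $\si_k(F)-\si(F)$ is a kernel operator whose kernel $F_k(u,v)-F(u,v)$ admits a domination of the form $\val{F_k(u,v)-F(u,v)}\le \va_k(u-v)$ with $\no{\va_k}_1\to 0$. Then Young's inequality, exactly as recorded in item (2) of the preliminaries, yields $\noop{\si_k(F)-\si(F)}\le\no{\va_k}_1\to 0$. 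Thus the whole difficulty is transferred to a pointwise-plus-$L^1$ estimate on the integral kernels
\begin{eqnarray*}
F_k(u,v)=\int_{\R^{n-d}}F(E_k(u)E_{k,d}(h)E_k(v)\inv)\ch_k(E_{k,d}(h))\,dh,
\end{eqnarray*}
and their limit $F(u,v)$ built from $E_\B$ and $\ch_\ell$.

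**The hard part will be** promoting the pointwise convergence of the integrands — already established in the proof of Proposition \ref{weakcon} via dominated convergence — to an $L^1$ bound on the kernel difference that is uniform in the difference variable $u-v$, and this is precisely where the multiplicity-1 hypothesis enters. Pointwise convergence of kernels alone does not control the operator norm, since the kernels live on an unbounded domain in the $h$-integration and the supports of the $f_k$ could a priori drift. The multiplicity-1 condition (item (3)) asserts that for any compact $K$ whose interior meets the limit orbit there is a fixed compact $C\subset G$ capturing all the relevant group elements for large $k$; translated to the present coordinates, this forces the $h$-integration in $F_k$ to concentrate, for large $k$, over a fixed compact range independent of $k$, so that no mass of the kernel escapes. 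Concretely, I would use this to show the supports of the $f_k$ lie in a common compact set (as already noted in Proposition \ref{weakcon}) and, crucially, that the convergence $F_k\to F$ is dominated by a single compactly supported $\va\in C_c(\R^d)$ of the difference variable, upgrading pointwise to $L^1$ convergence by dominated convergence on the kernels themselves.

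**Finally** I would pass to a subsequence if necessary — the statement already permits this — to guarantee that the Malcev bases converge and that the multiplicity-1 data $(s_k,s,C)$ behave coherently, and then conclude $\lim_k\noop{\si_k(F)-\si(F)}=0$ for all $F\in C_c(G)$, whence by density of $C_c(G)$ in $C^*(G)$ and the uniform bound $\noop{\si_k(a)}\le\no{a}$ the result extends to all $a\in C^*(G)$. I expect the multiplicity-1 step to be the genuine obstacle, because everything else is a routine combination of the kernel formula, Young's inequality, and dominated convergence already assembled in the preliminaries; the subtlety is verifying that the abstract geometric control over escaping orbit mass translates into the concrete $L^1$ domination of the kernels on $\R^d\times\R^d$.
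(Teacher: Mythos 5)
Your setup (convergent Malcev bases, all representations realized on $L^2(\R^d)$, reduction to a dense subspace) matches the paper's, but from there the two arguments diverge completely, and your route has a genuine gap at exactly the step you flag as the hard one. The paper never touches kernel estimates here: it passes to the Pedersen ideal $j(\es)$, invokes the result of \cite{L} that multiplicity-$1$ convergence implies $\si_k(a),\si(a)$ are finite rank and $\lim_k\tr(\pi_k(a))=\tr(\pi(a))$ for $a\in j(\es)$, and then for $a=a^*$ expands
$\Vert\si_k(a)-\si(a)\Vert_{H-S}^2=\tr(\si_k(a)^2)+\tr(\si(a)^2)-2\tr(\si_k(a)\circ\si(a))$,
which tends to $0$ by the trace convergence (applied to $a^2$) together with the weak convergence of Proposition \ref{weakcon} tested against the finite-rank operator $\si(a)$. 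Since the Hilbert--Schmidt norm dominates the operator norm, density of $j(\es)$ finishes the proof. All the analytic content of the multiplicity-$1$ hypothesis is thus outsourced to the trace theorem of \cite{L}, and no $L^1$ domination of kernels is ever needed.

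Your plan, by contrast, requires producing $\va_k\in L^1(\R^d)$ with $\val{F_k(u,v)-F(u,v)}\leq\va_k(u-v)$ and $\no{\va_k}_1\to0$, and you do not supply a mechanism for this. Two concrete problems. First, the "common compact support" you borrow from Proposition \ref{weakcon} is only available there because the integrand $f_k(u,v,h)=\xi(u)F(\cdot)\ol{\et(v)}$ carries the compactly supported vectors $\xi,\et$; for an operator-norm estimate those factors are gone, and for $F\in C_c(G)$ the kernel $F_k(u,v)$ is compactly supported in $u-v$ but in general only decays (is not compactly supported) in the transverse variable -- see the Heisenberg kernel $F_\la(s,t)=\wh F^{2,3}(s-t,-\tfrac\la2(s+t),\la)$. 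Second, you locate the role of multiplicity $1$ in a concentration of the $h$-integration, but that integration is automatically over a compact set once $F\in C_c(G)$, since $(u,h)\mapsto E_k(u)E_{k,d}(h)$ is a global diffeomorphism; multiplicity has nothing to do with it. Where multiplicity $1$ actually acts is in the transverse coset variable: it controls how often the orbit $G\cdot\ell_k$ re-enters a fixed compact subset of $\g^*$ (equivalently, the support of $t\mapsto\wh F^{\h_k}(\cdot,\Ad^*(t)\ell_k\res{\h_k})$), and converting that geometric statement into the required uniform $L^1$ bound is precisely the delicate work the paper carries out group by group in the later sections with the decompositions into the sets $U^k_{i,j}$. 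As written, your proposal identifies the obstacle, points at the wrong variable, and leaves the decisive estimate unproved.
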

\begin{proof} We write $\pi_k\simeq \si_k, k\in\N$, where $\si_k=\ind {P_k}G \ch_{\ell_k},k\in\N, $ where $\ell_k $ is an element in the Kirillov orbit of $\pi_k $ and where $P_k=\exp{(\p_k)} $ is a polarization at $\ell_k $. Since $\pi_k $ converges to $\pi $, We can assume that $\lim_k\ell_k =\ell$ for some $\ell $ in the orbit of $\pi $ and that (passing to a subsequence) $\lim_k\p_k =\p $ in the sense of the preceding Proposition \ref{weakcon}. Then $\p $ is  a polarization at $\ell $, since $\pi $ is the only limit of the sequence $(\pi_k)_k $.

Let $j(\es) $ be the minimal dense ideal of $C^*(G) $ i.e. the Pedersen ideal of $C^*(G)$.
Since $(\pi_k)_k $ converges with multiplicity 1 to $\pi $ it follows from  
\cite{L} that for very $a\in j(\es) $ the operators $\si_k(a), k\in\N, $ and 
$\si(a) $ are finite rank and that 
\begin{eqnarray*}
 \lim_k \tr(\pi_k(a))=\tr(\pi(a)).
 \end{eqnarray*}
Take now $a=a^*\in j(\es) $.
Then 
\begin{eqnarray*}
& &\Vert \si_k(a)-\si(a)\Vert_{H-S}^2\\&=&\tr(\si_k(a)^2)+\tr(\si(a^2))-\tr(\si_k(a)\circ \si(a))-\tr(\si(a)\circ \si_k(a) )\\
 &=&\tr(\si_k(a)^2)+\si(a^2)-2\tr(\si_k(a)\circ \si(a)))\\
 &\to &\tr(\si(a)^2)+\si(a^2)-2\tr(\si(a)\circ \si(a)))
 \\
 &=&0,
 \end{eqnarray*}
by Proposition \ref{weakcon}, the continuity of the trace  and the fact that $\si(a) $ has finite rank. Hence
\begin{eqnarray*}
 \lim_k \noop{\si_k(a)-\si(a)}=0.
 \end{eqnarray*}
The theorem now follows from the density of $j(\es) $ in $C^*(G) $.

\end{proof}

\item  
Denote for a measurable subset $ S\subset X $ of  a measure space $(X,\mu)$ 
 the multiplication operator with the indicator of a   measurable subset  $ S $ on $ L^2(X,\mu) $ by $ M_{S} $.
\begin{proposition}\label{sumsofop}
Let $(X,\mu)$ a measure space, let $(\si_i)_{i\in I}$ be a family of bounded linear operators on the Hilbert 
space $\H=L^2(X,\mu)$, such that $\noop{\si_i}\leq C,\ \forall i\in I,$ for some $C>0$. Suppose furthermore that there exists 
families $(T_{i,j})_{i\in I}(j=1,\cdots, N)$ and $(S_i)_{i\in I}$ 
of measurable subsets of $X$ such that $T_{i,j}\cap  T_{i',j}=\es,(j=1,\cdots, N),\  S_{i}\cap S_{i'}=\es$ whenever $i\ne i'$. Then the linear operator
\begin{eqnarray*}
 \si=\sum_{j=1}^N\sum_{i\in I}M_{T_{i,j}}\circ \si_i\circ M_{S_i}
 \end{eqnarray*}
is bounded by $NC$.

 \end{proposition}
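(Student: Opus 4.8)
The plan is to split $\si$ into $N$ summands, one for each value of the index $j$, to bound each summand separately by $C$, and then to apply the triangle inequality. Concretely, I would set
\begin{eqnarray*}
\si^{(j)}:=\sum_{i\in I}M_{T_{i,j}}\circ\si_i\circ M_{S_i},\qquad j=1,\cdots,N,
\end{eqnarray*}
so that $\si=\sum_{j=1}^N\si^{(j)}$. Once each $\si^{(j)}$ is shown to be a well-defined operator with $\noop{\si^{(j)}}\leq C$, the triangle inequality immediately gives $\noop{\si}\leq\sum_{j=1}^N\noop{\si^{(j)}}\leq NC$. Thus the whole problem reduces to the single-family case in which both the domain sets $S_i$ and the target sets (now $T_{i,j}$ for the fixed $j$) are pairwise disjoint.

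First I would fix $j$ and a vector $\xi\in\H=L^2(X,\mu)$, and observe that the essential point is the disjointness of the sets $T_{i,j}$ for this fixed $j$. Indeed, the vector $M_{T_{i,j}}\circ\si_i\circ M_{S_i}\xi$ is supported in $T_{i,j}$, so as $i$ ranges over $I$ these vectors have mutually disjoint supports and are therefore pairwise orthogonal in $L^2(X,\mu)$. Using in addition that $M_{T_{i,j}}$ and $M_{S_i}$ are orthogonal projections (hence of norm $\leq 1$), that $\noop{\si_i}\leq C$, and that the $S_i$ are pairwise disjoint, I would estimate
\begin{eqnarray*}
\sum_{i\in I}\no{M_{T_{i,j}}\circ\si_i\circ M_{S_i}\xi}^2
&\leq&\sum_{i\in I}\no{\si_i\circ M_{S_i}\xi}^2\\
&\leq& C^2\sum_{i\in I}\no{M_{S_i}\xi}^2\\
&=&C^2\int_{\bigcup_{i\in I}S_i}\val{\xi}^2\,d\mu\ \leq\ C^2\no{\xi}^2.
\end{eqnarray*}
Since the summands $M_{T_{i,j}}\circ\si_i\circ M_{S_i}\xi$ are mutually orthogonal and the sum of their squared norms is finite, the series $\si^{(j)}\xi=\sum_{i\in I}M_{T_{i,j}}\circ\si_i\circ M_{S_i}\xi$ converges in $\H$ (its partial sums are Cauchy by orthogonality), and by the Pythagorean identity $\no{\si^{(j)}\xi}^2$ equals the left-hand side above, so $\no{\si^{(j)}\xi}\leq C\no{\xi}$. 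Hence $\si^{(j)}$ is well defined and $\noop{\si^{(j)}}\leq C$, which is exactly what was needed.

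The only genuine subtlety, and the step I would treat with care, is that the index set $I$ may be infinite, so the displayed series defining $\si^{(j)}$ requires an argument for convergence and well-definedness; this is handled automatically by the orthogonality just used, which turns convergence of the vector series into mere summability of $\sum_{i\in I}\no{M_{S_i}\xi}^2$, itself a consequence of the disjointness of the $S_i$. It is worth emphasizing that the two disjointness hypotheses play complementary and independent roles: disjointness of the $T_{i,j}$ (for each fixed $j$) gives the orthogonality of the outputs and thus the Pythagorean bound on the range side, while disjointness of the $S_i$ controls the inputs via $\sum_i\no{M_{S_i}\xi}^2\leq\no{\xi}^2$. Combining the per-$j$ bound with the decomposition $\si=\sum_{j=1}^N\si^{(j)}$ completes the proof, giving $\noop{\si}\leq NC$.
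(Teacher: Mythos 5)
Your proof is correct and follows essentially the same route as the paper: the same decomposition $\si=\sum_{j=1}^N\si^{(j)}$, the same use of the disjointness of the $T_{i,j}$ (the paper phrases the Pythagorean step as passing from the integral of the squared sum to the sum of integrals over the disjoint sets $T_{i,j}$) and of the disjointness of the $S_i$, and the same triangle-inequality conclusion. Your explicit remark on convergence of the series over an infinite index set $I$ is a minor point the paper leaves implicit, but it is the same argument.
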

\begin{proof} Let us write 
\begin{eqnarray*}
 \si^j:=\sum_{i\in I}M_{T_{i,j}}\circ \si_i\circ M_{S_i},\  j=1,\cdots, N.
 \end{eqnarray*}
Then $\si=\sum_{j=1}^N\si^j$ and 
for $\xi\in L^2(X,\mu),j\in \{1,\cdots, N\}$ we then have:
\begin{eqnarray*}
 \no{\si^j(\xi)}_2^2&=&\int_X\left|\sum_{i\in I}M_{T_{i,j}}\circ \si_i\circ M_{S_i}(\xi)(x)\right|^2d\mu (x)\\
 &\leq&\sum_{i\in I}\int_{T_{i,j}}\val{\si_i(M_{S_i}(\xi))(x)}^2d\mu (x)\\
  &\leq &\sum_{i\in I}\int_X\val{\si_i(M_{S_i}(\xi))(x)}^2d\mu (x)\\
 &\leq&\sum_{i\in I}C^2\int_X\val{ M_{S_i}(\xi(x))}^2d\mu (x)\\
 &=&C^2\left(\sum_{i\in I}\int_{S_i}\val{\xi(x)}^2d\mu (x)\right)\\
 &\leq&C^2\int_X\val{\xi(x)}^2d\mu (x)\\
 &=&C^2\no\xi_2^2.
 \end{eqnarray*}
Hence $\noop\si\leq N C$.
\end{proof}
\item  
\begin{remark}\label{intmapp}
\rm   Let $(X,\mu), (Y,\nu)$ be two measure spaces. Let $Y\to L^2(X,\mu); y\to \xi(y))$ be  an integrable mapping. 
Then we have that:
\begin{eqnarray}\label{estfh2}
 \nn\left\Vert\int_Y \xi(y)d\nu(y)\right\Vert_2&\leq& \int_Y \no{\xi(y)}_2d\nu(y)
 \\ 
 \nn&\text{i.e.}\\
 \nn\left(\int_X\left\vert{\int_Y\xi(y)(x)d\nu(y)}\right\vert^2d\mu(x)\right)^{1/2}&\leq&\int_Y\left(\int_X\vert \xi(y)(x)\vert^2d\mu(x)\right)^{1/2}d\nu(y).
 \end{eqnarray}

 \end{remark}

\item  \begin{definition}\label{conin}
\rm   We say that a net$(\ga_i)_{i\in\I}$ in  a topological space $\GA$ goes to infinity, if the net contains no converging subnet.

In particular, a sequence of orbits $\ol \OM=(\OM_k)_{k\in\N}\subset \g^*$ goes to infinity, if for any compact subset $K\subset \g^*$ 
there exists an index $k_0$ such that $K\cap \OM_k=\es$ whenever $k\geq k_0$.
 \end{definition}
\begin{proposition}\label{infycon}(Riemann-Lebesgue Lemma)
Let $A$ be a $C^*$-algebra. If a net $(\pi_k)_k\subset \wh A$ goes to infinity, then $\lim_k \noop{\pi_k(a)}=0$ for all $a\in A$.  
 \end{proposition}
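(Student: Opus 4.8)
The plan is to prove the statement by contradiction, with essentially all the weight resting on one fact about the spectrum: the superlevel sets of the norm function are quasi-compact. Concretely, I would first isolate the ingredient that for every $a\in A$ and every $\ve>0$ the set
\[
K_{a,\ve}:=\{\pi\in\wh A:\ \noop{\pi(a)}\geq\ve\}
\]
is quasi-compact in $\wh A$ (this is Dixmier's 3.3.7). Granting this, the argument is immediate. Suppose $\lim_k\noop{\pi_k(a)}=0$ failed for some $a\in A$. Then there are an $\ve>0$ and a subnet $(\pi_{k_j})_j$ with $\noop{\pi_{k_j}(a)}\geq\ve$ for all $j$, so $(\pi_{k_j})_j$ is a net lying entirely in $K_{a,\ve}$. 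Since every net in a quasi-compact space has a cluster point, hence a converging subnet (no separation axiom is needed here), $(\pi_{k_j})_j$ has a subnet converging to some $\pi\in K_{a,\ve}\subset\wh A$. This subnet is also a subnet of $(\pi_k)_k$, so $(\pi_k)_k$ contains a converging subnet, contradicting the hypothesis that it goes to infinity. Hence $\lim_k\noop{\pi_k(a)}=0$ for every $a\in A$.

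It remains to justify the quasi-compactness of $K_{a,\ve}$, which is the one genuinely non-formal step. I would establish it as follows. Replacing $a$ by $a^*a$ and $\ve$ by $\ve^2$ and using $\noop{\pi(a)}^2=\noop{\pi(a^*a)}$, one may assume $a=a^*\geq0$. Given a net $(\pi_\alpha)$ in $K_{a,\ve}$, choose for each $\alpha$ a unit vector $\xi_\alpha$ in the space of $\pi_\alpha$ with $\langle\pi_\alpha(a)\xi_\alpha,\xi_\alpha\rangle\geq\ve/2$; this is possible because $\pi_\alpha(a)\geq0$, so $\sup_{\no{\xi}=1}\langle\pi_\alpha(a)\xi,\xi\rangle=\noop{\pi_\alpha(a)}\geq\ve$. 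Form the states $\varphi_\alpha(x):=\langle\pi_\alpha(x)\xi_\alpha,\xi_\alpha\rangle$ on $A$. The quasi-state space of $A$ is weak-$*$ compact, so $(\varphi_\alpha)$ has a weak-$*$ cluster point $\varphi$, a positive functional with $\varphi(a)\geq\ve/2>0$, in particular $\varphi\neq0$. Passing through the GNS construction and the description of the topology of $\wh A$ as a quotient of the pure-state space, a cluster point of the $\varphi_\alpha$ yields a cluster point of $(\pi_\alpha)$ in $\wh A$, which is the required converging subnet.

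The main obstacle is precisely this last transfer from functionals back to the spectrum. Note that $\pi\mapsto\noop{\pi(a)}$ is only lower semicontinuous on $\wh A$, so $K_{a,\ve}$ need not be closed, and one cannot obtain its quasi-compactness by realising it as a closed subset of a compact space. In particular, the naive attempt to present $K_{a,\ve}$ as the hull of the closed ideal generated by $(a-\ve/2)_+$ fails, since representations $\pi$ with $\ve/2<\noop{\pi(a)}<\ve$ do not annihilate that element. The clean way around this is the quasi-state argument above, where weak-$*$ compactness of the quasi-state space supplies the cluster point and the GNS analysis transports it to $\wh A$. For the present paper it is enough to invoke the quasi-compactness of $K_{a,\ve}$ as a known fact and to run the short subnet argument of the first paragraph; I would write it up that way, relegating the compactness statement to a citation.
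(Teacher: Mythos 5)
Your argument is correct and is essentially the paper's own proof: both rest entirely on the quasi-compactness of $\left\{\pi\in\wh A:\ \noop{\pi(a)}\geq\ve\right\}$ from Dixmier, Proposition 3.3.7, followed by the observation that a net avoiding all convergent subnets cannot have a subnet trapped in a quasi-compact set. The paper leaves the subnet step implicit and does not reprove the Dixmier fact, so your explicit contradiction argument (and your optional sketch of the quasi-state proof of quasi-compactness) simply fills in details the paper takes for granted.
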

\begin{proof} We know from \cite{Dix} Proposition 3.3.7,  that for every $ c>0$ and $a\in A$, the subset 
$\left\{\pi\in\wh A; \noop{\pi(a)}\geq c\right\}$ is quasi-compact. This shows that $\lim_k \noop{\pi_k(a)}=0$, if the net $(\pi_k)_k$ goes to infinity.
\end{proof}

 \item  Let $G$ be a locally compact group and $H$ a closed normal subgroup of $G$. Then the canonical projection $P_{G/H}:L^1(G)\mapsto L^1(G/H)$ defined by:
 \begin{eqnarray*} 
 P_{G/H}F(x):=\int_H F(xh)dh, F\in L^1(G), x\in G,
\end{eqnarray*}
is a surjective homomorphism (see \cite{Re}). 
Let 
\begin{eqnarray*}
 H^\perp=\left\{\pi\in \wh G, \pi(H)=\Id_{\H_\pi}\right\}.
 \end{eqnarray*}
Then
\begin{eqnarray*}
 \ker {P_{G/H}}=\left\{F\in L^1(G), \pi(F)=0,\pi\in H^\perp\right\}.
 \end{eqnarray*}
 The mapping $P_{G/H}$ extends then to a surjective $*$-homomorphism (also denoted by $P_{G/H}$) of $C^*(G)$ onto $C^*(G/H).$
 \end{enumerate}
\section{A special class of $ C^* $-algebras.}\label{spc}

\begin{definition}\label{fourtrans}
\rm Let $A$ be a $C^*-$algebra with spectrum $\wh A.$ We choose for every $\ga\in\wh A$ a representation 
$(\pi_\ga,\H_\ga)$ in the equivalence class $\ga$.
Let $l^{\iy}(\wh A)$ be the algebra of all bounded operator fields defined over $\wh A$ by
$$ l^\iy(\widehat A):=\left\{\ph=(\ph(\pi_\ga)\in\B(\H_{\ga}))_{\ga\in\widehat A},
\no \ph_\iy
:=\sup_{\ga}\noop {\ph(\pi_\ga)}<\iy\right\}.$$
We define for $a\in A$ its Fourier transform $ \F(a)=\wh a $ by:
\begin{eqnarray*}
\F(a)(\ga)=\wh a(\ga):=\pi_\ga(a),\ \ga\in\wh A.
\end{eqnarray*}
Then $\wh a$ is a bounded field of operators over $\wh A$, i.e. $\wh a\in l^\iy(\wh A)$. The mapping
\begin{eqnarray*}
 \F:A\to l^\iy(\wh A);\ a\mapsto \wh a
\end{eqnarray*}
is a isometric $*$-homomorphism. 
 \end{definition}
For a closed subset $ S\subset\wh A $, denote by $ CB(S) $ the $ * $-algebra of all uniformly 
bounded operator fields  $ (\ps(\ga)\in \B(\H_i))_{\ga\in S\cap \GA_i, i=1,\cdots, d}$, which are 
operator norm continuous on the subsets $ \GA_i \cap S$ for every $  i\in\{1,\cdots, d\} $ for which  $ \GA_i\cap S\ne\es $. 
We provide the algebra $ CB(S) $ with the infinity-norm:
\begin{eqnarray*}
\no{\va}_{S}:=\sup_{\ga\in S}\noop{\va(\ga)}.
\end{eqnarray*}
\begin{definition}\label{}
\rm  
Let $ A $ be a separable CCR $ C^* $-algebra, i.e. for every irreducible representation 
$ (\pi,\H)  $ of $ A $, the image of $ \pi $ is  the algebra of compact operators $ \K(\H) $.
 We suppose that there exists a 
finite increasing family $ S_0\subset S_1\subset\ldots\subset S_d=\wh A  $ of closed 
subsets of the spectrum $ \wh A $ of $ A $, such that for $ i=1,\cdots, d, $ the subsets $\GA_0=S_0$ and  
$ \GA_i:=S_i\setminus S_{i-1},\ i=1,\ldots,d,$  
are  Hausdorff in  their relative topologies. Furthermore we assume that for every $ i\in \{0,\cdots, d\} $ 
there  is a Hilbert space $ \H_i $ and for every $ \ga\in \GA_i $  a concrete realization $ (\pi_\ga,\H_i) $ of $ \ga $ on the Hilbert space $ \H_i $. 
The set $ S_0 $ is the collection $ \X $ of all 
characters of $ A $. 
 \end{definition}
\begin{definition}\label{norcontspec}
\rm   
We say that our $ C^* $-algebra $ A $ has   \textit{norm controlled dual limits } (\textit{NCDL} for short),  if for every $ a\in A $:
\begin{enumerate}\label{}
\item  The mappings $ \ga\to \F (a)(\ga) $ are norm continuous  on the different sets $ \GA_i $.

\item   For any  $ i=0,\cdots, d, $  and for any 
converging sequence contained in $ \GA_i $ with limit set  outside $ \GA_i $, there exists a properly converging sub-sequence $\ol\ga=(\ga_k)_{k\in\N} $,  a $ C>0 $ and for every $ k\in\N $  a linear mapping $ \tilde\si_{\ol\ga,k}: BC(S_{i-1})\to \B(\H_i)$, {which is  bounded by $ C\no{}_{S_{i-1}} $}, such that
\begin{eqnarray*}
&\lim_{k\to\iy }\noop{
\F (a)(\ga_k)-\tilde\si_{\ol\ga,k}(\F (a)\res{S_{i-1}}
}=0, \\ 
&\lim_{k\to\iy }
\noop{\tilde\si_{\ol\ga,k}(\F (a)\res{S_{i-1}})^*-\tilde\si_{\ol\ga,k}(\F (a)^*\res{S_{i-1}})}=0.
\end{eqnarray*}
(By the condition 1) the restriction of $ \F(a) $ to the limit set $ L(\ol\ga) $ is contained in $ CB(S_{i-1}) $. 
 \end{enumerate}
 \end{definition}
\begin{definition}\label{dnormconspec}
\rm   
 Let $ D^*(A) $ be the set of all operator fields $ \va $ defined over $ \wh A $ such that
\begin{enumerate}\label{}
\item $ \va(\ga)\in \K(\H_i) $ for every $ \ga\in\GA_i, i=1,\cdots, d $.
\item The field $ \ga $ is uniformly bounded, i.e. we have that $ \no\ga:=\sup_{\ga\in\wh A}\noop{\va(\ga)}<\iy $.
\item  The mappings $ \ga\to \va(\ga) $ are norm continuous on the different sets $ \GA_i $.
\item  We have for any sequence $ (\ga_k)_{k\in\N} \subset \wh A$ going to infinity, 
that  $ \lim_{k\to\iy}\noop{\va(\ga_k)}=0 $.
\item   Let $\ol\ga=(\ga_k)_{k\in\N} $ and $ \tilde\si_{k,\ol\ga},k\in\N, $ as in the point 2. of Definition \ref{norcontspec}. 
Then the restriction of $ \va $ to the subset $ L(\ol\ga) $ is contained in $ CB(S_{i-1}) $ by conditions 2. and 3.
and  we assume  that:
\begin{eqnarray*}
&\lim_{k\to\iy }
\noop{
\va(\ga_k)-\tilde\si_{\ol\ga,k}(\va\res{S_{i-1}})}=0, \\ 
&\lim_{k\to\iy }\noop{\tilde \si_{\ol\ga,k}(\va\res{S_{i-1}})^*-\tilde\si_{\ol\ga,k}(\va^*
\res{S_{i-1}})}=0.
\end{eqnarray*} 
 \end{enumerate}

 \end{definition}
We remark immediately   that for every $ a\in A $, the operator field 
$ \F(a) $ 
is contained in the set $ D^*(A) $. 
It turns out that $ D^*(A) $ is  a $ C^* $-sub-algebra 
of $ l^\iy(\wh A) $ and that 
$ A $   is isomorphic to $ D^*(A).$
\begin{theorem}\label{aisdsta}
Let $ A $ be a separable $ C^* $-algebra with  norm controlled dual limits. Then the subset 
$ D^*(A) $ of the $ C^* $-algebra $l^\iy(\wh A)  $ is a $ C^* $-sub-algebra of $l^\iy(\wh A)$ which is isomorphic with $ A $ 
under the Fourier transform.
 \end{theorem}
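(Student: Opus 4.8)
The plan is to reduce the whole statement to the single assertion that the Fourier transform maps $A$ \emph{onto} $D^*(A)$. Since $\F:A\to l^\iy(\wh A)$ is an isometric $*$-homomorphism (Definition \ref{fourtrans}), its image $\F(A)$ is automatically a closed $*$-subalgebra of $l^\iy(\wh A)$ and $\F$ is an isomorphism onto it; moreover we have already observed that $\F(A)\subseteq D^*(A)$. Hence both assertions of the theorem follow at once from the reverse inclusion $D^*(A)\subseteq\F(A)$. Before proving this I would record that $D^*(A)$ is a linear subspace of $l^\iy(\wh A)$ stable under the adjoint: conditions (1)--(4) of Definition \ref{dnormconspec} are visibly preserved under linear combinations and under $\va\mapsto\va^*$, condition (5) is preserved under linear combinations because each $\tilde\si_{\ol\ga,k}$ is linear, and it is preserved under the adjoint precisely because of the second limit in condition (5), which compares $\tilde\si_{\ol\ga,k}(\va\res{S_{i-1}})^*$ with $\tilde\si_{\ol\ga,k}(\va^*\res{S_{i-1}})$. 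I would then prove $D^*(A)\subseteq\F(A)$ by induction on the length $d$ of the filtration $S_0\subset\cdots\subset S_d=\wh A$.

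For the base case $d=0$ we have $\wh A=S_0=\X$, so every irreducible representation is a character, $A$ is commutative, $\F$ is the Gelfand transform, and $D^*(A)=C_0(\wh A)=\F(A)$. For the inductive step, set $I:=\bigcap_{\pi\in S_{d-1}}\ker{\pi}$. By the correspondence between closed ideals and open subsets of the spectrum, $I$ is a closed ideal with $\wh I\cong\GA_d=\wh A\setminus S_{d-1}$, the quotient $A_{d-1}:=A/I$ has spectrum $S_{d-1}$, and its representations are the $\pi_\ga$ ($\ga\in S_{d-1}$) factored through $q:A\to A_{d-1}$. The shorter filtration $S_0\subset\cdots\subset S_{d-1}$ equips $A_{d-1}$ with the NCDL structure using the \emph{same} maps $\tilde\si_{\ol\ga,k}$ for sequences internal to $S_{d-1}$. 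Now fix $\va\in D^*(A)$. I would first check $\va\res{S_{d-1}}\in D^*(A_{d-1})$: conditions (1)--(3) restrict directly; condition (4) holds because $S_{d-1}$ is closed, so a sequence in $S_{d-1}$ escapes every compact subset of $S_{d-1}$ iff it escapes every compact subset of $\wh A$; and condition (5) for $A_{d-1}$ involves only convergence internal to $S_{d-1}$. By the induction hypothesis $D^*(A_{d-1})=\F_{d-1}(A_{d-1})$, so there is $b\in A_{d-1}$ with $\F_{d-1}(b)=\va\res{S_{d-1}}$; lifting to $\tilde b\in A$ with $q(\tilde b)=b$ gives $\F(\tilde b)\res{S_{d-1}}=\va\res{S_{d-1}}$. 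Set $\psi:=\va-\F(\tilde b)$; then $\psi\in D^*(A)$ and $\psi\res{S_{d-1}}=0$.

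The crucial point is that $\psi\res{\GA_d}$ vanishes at infinity of the locally compact Hausdorff space $\GA_d$. Indeed, if some sequence $(\ga_k)\subset\GA_d$ satisfied $\noop{\psi(\ga_k)}\geq\varepsilon>0$ while leaving every compact subset of $\GA_d$, then after passing to a subsequence either $(\ga_k)$ goes to infinity in $\wh A$, contradicting condition (4) for $\psi$, or $(\ga_k)$ converges in $\wh A$ to a point of $S_{d-1}$; in the second case I extract a properly converging subsequence as in condition (5) and apply it to $\psi$, and since $\psi\res{S_{d-1}}=0$ and $\tilde\si_{\ol\ga,k}$ is linear we get $\tilde\si_{\ol\ga,k}(\psi\res{S_{d-1}})=0$, forcing $\noop{\psi(\ga_k)}\to 0$, a contradiction. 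Thus $\psi\res{\GA_d}\in C_0(\GA_d,\K(\H_d))$. Next I would identify $\F(I)$ with $C_0(\GA_d,\K(\H_d))$: the ideal $I$ is CCR with Hausdorff spectrum $\GA_d$, each $\ga\in\GA_d$ is realized irreducibly on the fixed space $\H_d$ with image $\K(\H_d)$, and the continuous common realization makes $\F(I)$ a $C^*$-subalgebra of $C_0(\GA_d,\K(\H_d))$ which acts irreducibly at every point and separates points; a Stone--Weierstrass/Fell-type argument for liminal algebras with Hausdorff spectrum then gives $\F(I)=C_0(\GA_d,\K(\H_d))$. Hence there is $c\in I$ with $\F(c)\res{\GA_d}=\psi\res{\GA_d}$, while $\F(c)\res{S_{d-1}}=0$ since $c\in I$. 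As $\wh A=S_{d-1}\sqcup\GA_d$ and $\psi$ vanishes on $S_{d-1}$, this yields $\F(c)=\psi$, so $\F(\tilde b+c)=\F(\tilde b)+\psi=\va$ and $\va\in\F(A)$, completing the induction.

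I expect the main obstacle to be the surjectivity $\F(I)=C_0(\GA_d,\K(\H_d))$ onto \emph{all} continuous sections vanishing at infinity: this is where liminality of $A$, Hausdorffness of $\GA_d$, and the continuous common realization on $\H_d$ must be combined, and it is the only step that uses the internal $C^*$-structure of $A$ rather than the bookkeeping of the filtration. By contrast the conceptual heart is the short boundary-vanishing argument for $\psi$, which is exactly what the hypothesis of norm controlled dual limits (condition (5), together with linearity of $\tilde\si_{\ol\ga,k}$) was designed to supply.
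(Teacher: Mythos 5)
Your argument is correct, and while it shares the paper's two essential ingredients, it organizes the inductive step in a genuinely different way. Both proofs induct on the filtration $S_0\subset\cdots\subset S_d$, and both rest on (a) the observation that a field in $D^*(A)$ vanishing on $S_{i-1}$ must vanish in norm along any properly converging sequence in $\GA_i$ with limit in $S_{i-1}$ (condition 5 plus linearity of $\tilde\si_{\ol\ga,k}$), hence lies in $C_0(\GA_i,\K(\H_i))$, and (b) the Fell--Stone--Weierstrass identification of the corresponding ideal with \emph{all} of $C_0(\GA_i,\K(\H_i))$; for (b) the paper invokes Proposition 2.11.2 of Dixmier to get the separation of points, and your sketch needs exactly the same input, so the two treatments leave the same amount unproved at the same spot. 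The difference is in how the induction closes. The paper first shows that $D^*_i$ is a $C^*$-subalgebra of $l^\iy(S_i)$ --- the stability of condition 5 under products requires a separate chain of estimates using the uniform bound on the $\tilde\si_{\ol\ga,k}$ and the inductive fact that $\va\res{S_{i-1}}$, $\ps\res{S_{i-1}}$ are already Fourier transforms --- and then compares the spectra of the pair $\F(A)\res{S_i}\subseteq D^*_i$, sorting the irreducible representations of $D^*_i$ according to whether they annihilate the kernel $K_{i-1}$ of the restriction map. You instead lift element by element through $0\to I\to A\to A/I\to 0$: restrict, lift by the inductive hypothesis applied to $A/I$, and place the discrepancy $\psi$ in $\F(I)=C_0(\GA_d,\K(\H_d))$. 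This buys you two simplifications: multiplicative closedness of $D^*(A)$ is never verified directly but falls out a posteriori from $D^*(A)=\F(A)$, and the noncommutative Stone--Weierstrass comparison of $\F(A)\res{S_i}$ with the a priori larger algebra $D^*_i$ is replaced by the more elementary exact-sequence bookkeeping. What the paper's route buys in exchange is the intermediate statement that each $D^*_i$ is itself a $C^*$-algebra, which is of some independent interest. Two small points worth making explicit in your write-up: the dichotomy in your boundary-vanishing argument (escape to infinity in $\wh A$ versus convergence to a point of $S_{d-1}$) uses that $\GA_d\cong\wh I$ is locally compact Hausdorff, so a cluster point inside $\GA_d$ is incompatible with leaving every compact subset of $\GA_d$; and the inclusion $\F(I)\subseteq C_0(\GA_d,\K(\H_d))$ itself already requires the same boundary-vanishing argument you run for $\psi$, applied to $\F(c)$ with $c\in I$.
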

\begin{proof}

We see that the conditions 1. to 4. imply that $ D^*(A) $ is a closed involution-invariant subspace 
of $ l^\iy(\wh A) $.
For $ i=0,\cdots, d,  $ let  $ D^*_i $ be the set of all operator fields defined over $ S_i $, satisfying conditions 1. to 5. on the sets  $ S_j,\ j=1,\cdots, i $. 
Then the sets $ D^*_i $ are closed sub-spaces of the $ C^* $-algebra $ l^\iy(S_i) $.
Let $ I_C $ be the closed two-sided ideal in $ A $ generated by the elements of the form $ ab-ba, a,b\in A $. 
Then the space of characters $ S_0=\X $ of $ A $ is the spectrum of $ A/I_C $ and  $ D^*_0$ equals the algebra  $ C_0(S_0) $ 
of continuous functions on $ S_0 $ vanishing at infinity by the conditions 1., 2., 3. and 4. 
Since $ \F(C^*(A))\res{S_0}=C_0(S_0) $  it follows that $D_0^*=\F(A)\res{S_0}$.\\
Let us assume now that for some $ 1\leq i <d$ we have that
$ D^*_j=\F(A)\res{S_j}, j=1,\cdots, i-1 $.  We shall prove then that $ D^*_i=\F(A)\res{S_i} $. We know already that 
$ \F(A)\res{S_i} $ is an algebra sitting in the closed sub-space  $ D^*_i \subset l^\iy(\S_i)$ and it follows from its definition that  the restriction of $ D_i^* $ to $ S_{i-1} $ is contained in $ D^*_{i-1} $. 
Let $ \va,\ps\in D^*_i $.  By our assumption, there exists $ a,b\in A $ such that $ \va\res{S_{i-1}}=\wh a\res{S_{i-1}} $ 
and $ \ps\res{S_{i-1}}=\wh b\res{S_{i-1}} $. 
The product $ \va\circ \ps$ satisfies then also the conditions 1.to 4. 
for $ i $. We shall show that it too satisfies  condition $ 5. $ for $ i. $
Indeed  we have for any properly converging sequence $ (\ga_k)_k\subset \GA_i $ with 
limit set outside $ \GA_i $ that:
\begin{eqnarray*}
&&\noop{\va(\ga_k)\circ \ps(\ga_k)-\tilde\si_{\ol\ga,k}(\va\circ\ps\res {S_{i-1}})}\\
&=&\noop{\va(\ga_k)\circ \ps(\ga_k)-\tilde\si_{\ol\ga,k}(\va\res {S_{i-1}}\circ\ps\res {S_{i-1}})}\\
\nn &=&\noop{\va(\ga_k)\circ \ps(\ga_k)-\tilde\si_{\ol\ga,k}(\wh a\res {S_{i-1}}\circ\wh b\res {S_{i-1}})}\\
&\leq&\noop{\va(\ga_k)\circ \ps(\ga_k)-\tilde\si_{\ol\ga,k}(\va\res {S_{i-1}})\circ\tilde\si_{\ol\ga,k}(\ps\res {S_{i-1}})}\\
&+&\noop{\tilde\si_{\ol\ga,k}(\va\res {S_{i-1}})\circ\tilde\si_{\ol\ga,k}(\ps\res {S_{i-1}})-\wh a (\ga_k)\circ\wh b(\ga_k)}\\
&+&\noop{\wh a (\ga_k)\circ\wh b(\ga_k)-\tilde\si_{\ol\ga,k}(\wh a\res {S_{i-1}}\circ\wh b\res {S_{i-1}})}\\
&=&\noop{\va(\ga_k)\circ \ps(\ga_k)-\tilde\si_{\ol\ga,k}(\va\res {S_{i-1}})\circ\tilde\si_{\ol\ga,k}(\ps\res {S_{i-1}})}\\
&+&\noop{\tilde\si_{\ol\ga,k}(\wh a\res {S_{i-1}})\circ\tilde\si_{\ol\ga,k}(\wh{b}\res {S_{i-1}})-\wh a (\ga_k)\circ\wh 
b(\ga_k)}\\
&+&\noop{\wh {(ab)} (\ga_k)-\tilde\si_{\ol\ga,k}(\wh{ab}\res {S_{i-1}})}.
\end{eqnarray*}
This shows that $ \lim_{k\to\iy }\noop{\va\circ \ps(\ga_k)-\tilde\si_{\ol\ga,k}(\va\circ\ps\res {S_{i-1}})}=0 $ 
and so $ \va\circ \ps\in D^*_i $. Hence the subspace $ D^*_i $ is a $ C^* $-sub-algebra 
of $ L^{\iy}(S_i) $ containing the algebra $ \F(A)\res{S_i} $.
In order to prove that $ D^*_i=\F(A)\res{S_i} $, by  Stone-Weierstrass it suffices to show that the spectrum 
of $ \wh{D^*_i} $ equals that of $ \F(A)\res{S_i} $, i.e. that every element in $ \wh{D^*_i} $ is 
an evaluation at some point in $ S_i $. 
Let $ \pi\in \wh{D^*_i} $. Consider the kernel $ K_{i-1} $ of $R_{i-1}$, the restriction mapping 
from $ D^*_i $ into $ l^\iy(S_{i-1}) $. If $ \pi(K_{i-1})=\{0\} $, then we can consider $ \pi $ 
as being a representation of the quotient algebra $ D^*_i/K_{i-1} $. 
But the image of $ R_{i-1} $ is contained in $ D^*(S_{i-1}) $ and contains $ \F(A)\res{S_{i-1}} $ 
and so by assumption $ R_{i-1}(D^*_i)=\F(A)\res{S_{i-1}} $.  Hence $D^*_i/K_{i-1}\simeq\F(A){\res{S_{i-1}}}$ 
and therefore $ \pi  $ is an evaluation at a point in $ S_{i-1} $. 
If $ \pi(K_{i-1})\ne\{0\} $, then we look at the restriction of $ \pi $ to this ideal. 
The elements in $ K_{i-1} $ are operator fields defined on $ S_i $ which are norm continuous, 
which go to 0 at infinity and by condition 5., for any properly converging sequence 
$ \ol\ga\subset \GA_i $ with limit  outside $ \GA_i $, for every $ \va\in D^*_i $, we have that
\begin{eqnarray*}
\underset{k\to\iy}{\lim}\noop{\va(\ga_k)}&=&
\lim_{k\to\iy }\noop{\va(\ga_k)-\tilde\si_{\ol\ga,k}(\va\res{S_{i-1}})}=0.
\end{eqnarray*} This shows that $ K_{i-1}\subset C_0(\GA_i,\K(\H_i)) $. 
On the other hand the elements  of $ \F(A)\res{S_i}\cap K_{i-1} $ separate the points of $ \GA_i $ (see Proposition $2.11.2$ in \cite{Dix} for details). 
Therefore by the theorem of Stone-Weierstrass the algebras $ C_0(\GA_i,\K(\H_i)) $ and $ K_{i-1} $ coincide. 
This tells us that $ \pi $ is an evaluation at a point in $ \GA_i $, since the spectrum of the algebra $ C_0(\GA_i,\K(\H_i)) $ is homeomorphic to $ \GA_i $. 
 We conclude that $ \F(A)\res{S_i}=D^*_i $.
 \end{proof}
 \section{The list of the nilpotent Lie algebras of dimension $\leq5$ according to \cite{Nie}. }
There are 8 un-decomposable nilpotent non-abelian Lie algebras of dimension $\leq 5$: the Heisenberg Lie algebras 
of dimension 3 and 5, $\h_1 \text{ and } \h_2$, the thread-like Lie algebras of dimension 4 and 5,  $\f_4\text{ and } \f_5$, the step 2 Lie algebra $\g_{5,2}$, two step 3 Lie algebras, $\g_{5,3}$ and $\g_{5,4}$ and the step 4 Lie algebra $\g_{5,6}$.
 \begin{enumerate}
  \item The Heisenberg Lie algebras $\h_n, n\geq 1:$
  
    Let  $\h_n$ be the nilpotent Lie algebra of dimension 2n+1 spanned by the basis $$\B=\text{span}\left\{X_1,\cdots, X_n, Y_1,\cdots, Y_n,Z\right\}$$ equipped with the Lie bracket
 $$[X_i,Y_j]=\de_{i,j}Z, i,j=1,\cdots, n.$$

  \item The thread-like Lie algebras  $\f_n,n\geq 4:$
  
     Let  $\f_{n}$ be the nilpotent Lie algebra spanned by the basis $\B=\text{span}\{X_n,\cdots, X_1\}$ equipped with 
the Lie brackets:
\begin{eqnarray*}
 [X_n, X_j]=X_{j-1}, j=n-1,\cdots, 2.
 \end{eqnarray*}

  \item  the step $2$  Lie algebra $\g_{5,2}:$ 
  
   Let $\g_{5,2}$ be the nilpotent Lie algebra spanned by the basis $\B=\text{span}\{A,B,C,U,V\}$ equipped 
with the Lie brackets $$[C,A]=-U,\ [C,B]=V.$$
This is a semi-direct product of $\R$ with $\R^4$.
 \item  The step 3 Lie algebras $\g_{5,3}$ and $\g_{5,4}$:
  \begin{itemize}
 
 \item  Let $\g_{5,3}$ be the nilpotent Lie algebra spanned by the basis $\B=\text{span}\{A,B,C,U,V\}$ equipped with the Lie brackets
 $$[A,B]=U,\ [A,U]=V,\ [B,C]=V.$$
 This is  a semi-direct product of $\R$ with $\h_1\ti \R$.
   \item Let $\g_{5,4}$ be the nilpotent Lie algebra spanned by the basis
$\B=\{A,B,C,U,V\}$ equipped with the Lie brackets
\begin{eqnarray}\label{defofbracg54}
 \nn [A,B]= C,\ [A,C]= U,\  [B,C]= V.
\end{eqnarray} 
  \end{itemize}
  This is a semi-direct product of $\R$ with $\f_4\ti \R$.
\item The Step $4$ Lie algebra $\g_{5,6}:$

  Let $\g_{5,6}$ be the nilpotent Lie algebra spanned by the basis $\B=\{A,B,C,U,V\}$ equipped 
with the Lie brackets $$[A,B]= C,\ [A,C]= U,\  [A,U]= V,\ [B,C]=V.$$
This is a semi-direct product of $\R$ with $\f_4$.

 \end{enumerate}

 \section{The $C^*-$algebras of $H_n,n\geq 1$ and $F_n, n\geq 4$.}

 The $ C^* $-algebras of $ H_n $ and of $F_n $
have been realized  as  algebras of operator fields in \cite{Lud-Tur}.
It is shown there that the corresponding $C^* $-algebras have the norm-controlled dual limit property.

We give some details on the spectrum of these groups.
 \subsection{The groups $H_n$.}

The orbit space $\h_n^*/H_n$ consists of two layers:
\begin{enumerate}\label{}
\item the set of characters $\X=\GA^n_0=\sum_{j=1}^n \R X_j^*+\R Y_j^*$,
\item  the set $\GA^n_1$ of flat orbits:
\begin{eqnarray}\label{defofganHeis}
 \GA^n_1=\left\{\OM_\la=\la Z^*+Z^{*\perp}=\la Z^*+\sum_{j=1}^n \R X_j^*+\R Y_j^*, \la\in\R^*\right\}.
 \end{eqnarray}
 It is easy to see the relative topology of $\GA^n_1$ is Hausdorff, that $\OM_\la$ goes to infinity if $\la$ goes to infinity and that
 \begin{eqnarray*}
 \lim_{\la\to 0}\OM_\la=\GA^n_0.
 \end{eqnarray*}

The irreducible representation $\pi_\la$ associated to $\la\in\R^*$ acts on $L^2(\R^n)$ and for $F\in L^1(H_n)$ the operator $\pi_\la(F)$ is a kernel operator with kernel function $\F_\la$ given by
\begin{eqnarray*}
 F_\la(s,t):=\wh F^{2,3}(s-t, -\frac{\la}{2}(s+t),\la), s,t\in\R^n
\end{eqnarray*}
where
\begin{eqnarray*}
 \wh F^{2,3}(s,t,\la):=\int_{R^n\ti \R}F(s,u,z)e^{-2 i \pi(t\cdot u+\la z)}dudz.
 \end{eqnarray*}

 \end{enumerate}
 \subsection{The groups $F_n,\ n\geq 4$.} 
 
 For $n\geq4,$ let $\f_n$ be the $n-$dimensional real nilpotent Lie algebra with basis $\{X_1,\ldots,X_n\}$ and non-trivial Lie brackets
 $$[X_n,X_{n-1}]=X_{n-2},\ldots,[X_n,X_2]=X_1.$$
 For all  $\ell=\xi_1X_1^*+\ldots+\xi_{n-1}X_{n-1}^*$ and $t\in\R$ let 
 \begin{eqnarray*}
  t\cdot\ell&=&\Ad^*(\exp(tX_n))\xi\\
  &=& \left(0,\xi_{n-1}-t\xi_{n-2}+\ldots+\frac{1}{(n-2)!}(-t)^{n-2}\xi_1,\ldots,\xi_2-t\xi_1,\xi_1\right).
 \end{eqnarray*}
We define the function $\wh\ell$ on $\R$ by 
$$\wh\ell(t):=(t\cdot\xi)_{n-1}=\xi_{n-1}-t\xi_{n-2}+\ldots+\frac{1}{(n-2)!}(-t)^{n-2}\xi_1.$$ 
The mapping $\ell\to\wh\ell$ intertwines the $\Ad^*-$action and translation in the following way:
$$\wh{t\cdot\xi}(s)=(s\cdot(t\cdot\xi))_{n-1}=((s+t)\cdot\xi)_{n-1}=\wh\xi(s+t),\ \text{for } s,t\in\R.$$
This identifies the dual space $\f_n^*$ with the space $\P_{n-2}\oplus\R$, where $\P_{n-2}$ denotes the space  $\R[X,\leq{ n-2}]$ of 
real polynomials of degree $\leq n-2$. 
A co-adjoint orbit $\OM_\ell$ then corresponds to the translates
$\{\ell(t)P,t\in\R\}$ of some polynomial $P\in \R[X,\leq n-2]$. 

We can  parameterize the orbit space $\f_n^*/F_n$  with the sets
$$\underset{j=1}{\overset{n-2}{\bigcup}}\GA^n_j,$$
where $\GA^n_j:=\left\{\ell\in\f_n^*,\ \ell(X_k)=0,\ k=1,\ldots,j-1,j+1,\ \ell(X_j)\ne0\right\}.$

\begin{example}
 \begin{enumerate}
  \item For $n=4:$ The $4-$dimensional thread-like  algebra let  $F_4$ is the $4-$dimensional thread-like group which the multiplication
is given by 
$$(a,b,c,u)(a',b',c',u')=(a+a',b+b',c+c'-a'b,u+u'-a'c+\frac{a'^2b}{2}).$$
Let $\{A,B,C,U\}$ be the canonical basis of $\f_4$. For all  $\ell=\al A^*+\be B^*+\rh C^*+\mu U^*$ and $t\in\R$, the co-adjoint action is given by:
\begin{eqnarray}\label{Ad*g4}
\nn t\cdot\ell
&=&(\al,\be-\rh t+\mu\frac{t^2}{2},\rho-\mu t,\mu)
\end{eqnarray}
and 
$$\wh\ell(t):=\be-\rho t+\mu\frac{t^2}{2}.$$
We can parameterize the orbit space $\f_4^*/F_4$ in the following way. First we have a decomposition 
$$\f_4^*/F_4=\GA_2^4\cup\GA_1^4\cup\GA_0^4,$$
where 
\begin{eqnarray}\label{parmoforbspoff4}
 \nn\GA_2^4&=&\{\ell\in\f_4^*,\ \ell(U)\ne0,\ \ell(C)=0\}\\
 \GA_1^4&=&\{\ell\in\f_4^*,\ \ell(U)=0,\ \ell(C)\ne0,\ell(B)=0\}\\
 \nn\GA_0^4&=&\{\ell\in\f_4^*,\ \ell(U)=0,\ \ell(C)=0\}.
 \end{eqnarray}

\item For $n=5:$ The $5-$dimensional thread-like  algebra let  $F_5$ is the $5-$dimensional thread-like group which the multiplication
is given by 
$$(a,b,c,u,v)(a',b',c',u',v')=(a+a',b+b',c+c'-a'b,u+u'-a'c+\frac{a'^2b}{2},v+v'-a'u+\frac{a'^2c}{2}-\frac{a'^3b}{6}).$$
Let $\{A,B,C,U,V\}$ be the canonical basis of $\f_5$. For all  $\ell= \al A^*+\be B^*+\rh C^*+\mu U^*+\nu V^*$ and  $a\in\R,$  the co-adjoint action is given by:
\begin{eqnarray}\label{Ad*g5}
\nn t\cdot\ell&=&(\al,\be-\rho t+\mu\frac{t^2}{2}-\nu\frac{t^3}{6},\rho-\mu t+\nu\frac{t^2}{2},\mu-\nu t,\nu).
\end{eqnarray}
and  
$$\wh\ell(t):=\be-\rho t+\mu\frac{t^2}{2}-\nu\frac{t^3}{6}.$$
We can parameterize the orbit space $\f_5^*/F_5$ in the following way. First we have a decomposition 
$$\f_5^*/F_5=\GA_3^5\cup\GA_2^5\cup\GA_1^5\cup\GA_0^5,$$
where 
\begin{eqnarray*}
 \GA_3^5&=&\{\ell\in\f_5^*,\ \ell (V)\ne0,\ \ell(U)=0\},\\
\GA_2^5&=&\{\ell\in\f_5^*,\ \ell (V)=0,\ \ell(U)\ne0,\ \ell(C)=0\},\\
 \GA_1^5&=&\{\ell\in\f_5^*,\ \ell (V)=0,\ \ell(U)=0,\ \ell(C)\ne0,\ \ell(B)=0\},\\
 \GA_0^5&=&\{\ell\in\f_5^*,\ \ell (V)=0,\ \ell(U)=0,\ \ell(C)=0\}.
 \end{eqnarray*}
 \end{enumerate}

\end{example}

\section{ The $ C^* -$algebra of the group $G_{5,2}$.}
\subsection{}\label{intro}
The  description of the groups $G_{5,2}$  and of the three other ones of dimension 5,  of their  co-adjoint orbits and irreducible representations can be found 
in \cite{Nie}. 

We shall describe in this and in the following sections for our 4 remaining groups the limit sets of properly converging sequences $\ol O=(\pi_k)_k $ in their  dual spaces and we are explicitly constructing the mappings $\si_{\ol O,k} $. In this way it will turn out that the $C^* $-algebra of every connected Lie group of dimension $\leq 5 $ has norm controlled dual limits.

\subsection{}\label{gft}
 Recall that the Lie algebra of  $\g_{5,2}$ is  spanned by the basis $\B=\text{span}\{A,B,C,U,V\}$ equipped 
with the Lie brackets $$[C,A]=-U,\ [C,B]=V.$$
The Lie algebra $\g_{5,2}$ has a  two-dimensional centre $ \z=\textrm{span}\{U,V\} $. 
The group $G_{5,2}=\exp(\g_{5,2})$ can be realized as $ \R^{5} $ with the  Campbell-Baker-Hausdorff multiplication
\begin{eqnarray}\label{aaac3} 
\nn & &(a,b,c,u,v)(a',b',c',u',v')\\
\nn&=&(a+a',b+b',c+c',u+u'+\frac{1}{2}(ac'-a'c),v+v'+\frac{1}{2}(b'c-bc')
).
\end{eqnarray}
For all $(a,b,c,u,v)\in G_{5,2}$, $ (\al,\be,\rh,\mu,\nu)\in \g^*_{5,2} $ we obtain the following expression for $\Ad^*(a,b,c,u,v):$
\begin{eqnarray}\label{orgft}
\Ad^*(a,b,c,u,v)(x,y,t,\mu,\nu)=(x-\mu c,y+\nu c,t+\mu a-\nu b,\mu,\nu)
\end{eqnarray}

We give now a description of the co-adjoint orbits:
\begin{enumerate}\label{orftw}
 \item The generic elements $ (\al,\be,\rh,\mu,\nu) $  in $ \g^*_{5, 2} $ are  those for which $r^{2}_{\mu,\nu}=r^2=\mu^2+\nu^2\ne0$. 
 It follows from equation (\ref{orgft}) that a generic orbit $ \O $ is  determined by  3 parameters $ (\be,\mu,\nu) \in \R^3,r_{\mu,\nu}\ne 0 $:
 \begin{eqnarray*}\label{genftwdet}
\O=\O_{\be,\mu,\nu}=\left\{xA^*+y B^*+c C^*+\mu U^*+\nu V^*, c\in\R, \nu x+\mu y=\be r_{\mu,\nu} \right\}. 
\end{eqnarray*}
In particular, if $ \mu,\nu\ne 0 $, then the element 
\begin{eqnarray}\label{nunen}
 \frac{\be r_{\mu,\nu}}{\nu}A^*+\mu U^*+\nu V^* & 
\end{eqnarray}
is contained in $ \O_{\be,\mu,\nu} $ and if $ \nu=0 $ then the functional 
\begin{eqnarray}\label{nuisn}
\be \textrm{sign}(\mu)B^*+\mu U^*   
\end{eqnarray}
belongs to $ \O_{\be,\mu,0} $.

We take a new basis $ \B_{\mu,\nu} $ of $ \g_{5,2} $. For that,  letting $ \tilde
\mu:=\frac{\mu}{r_{\mu,\nu}},\tilde\nu:=\frac{\nu}{r_{\mu,\nu}} $ we put:
\begin{eqnarray}\label{basisp}
 \B_{\mu,\nu}&:=&\{A_{\mu,\nu}:=\tilde \mu A-\tilde \nu B, B_{\mu,\nu}:=\tilde\nu A+\tilde\mu B, C, U,V
\}. 
\end{eqnarray}
Let also
\begin{eqnarray*}
Z_{\mu,\nu}=\tilde\mu U+\tilde\nu V, T_{\mu,\nu}=\tilde \nu U-\tilde \mu V.
\end{eqnarray*}

Then:
\begin{eqnarray*}
[A_{\mu,\nu}, C] =Z_{\mu,\nu}, [B_{\mu,\nu},C] =T_{\mu,\nu}\\
\langle{\ell_{\be,\mu,\nu}},{[A_{\mu,\nu}, C]}\rangle=r_{\mu,\nu}.
\end{eqnarray*}

In this basis for any 
$R= x A_{\mu,\nu}+b B_{\mu,\nu}+y C +z Z_{\mu,\nu}+t T_{\mu,\nu} $, resp
$ R' =x' A_{\mu,\nu}+b' B_{\mu,\nu}+y' C +z' Z_{\mu,\nu}+t' T_{\mu,\nu} $ in $ \g $ we obtain the multiplication:
\begin{eqnarray}\label{hausmulhei}
\nn R\cdot R'&=& (x+x')A_{\mu,\nu}+(b+b') B_{\mu,\nu}+(y+y') C+\\
 &+& \left(z+z'+\frac{1}{2}(xy'-x' y) \right)Z_{\mu,\nu}+\left(t+t'+\frac{1}{2}(by'-b'y)\right) T_{\mu,\nu}.
\end{eqnarray}
We see that the vectors $ A_{\mu,\nu}, B_{\mu,\nu},Z_{\mu,\nu} $ span the three dimensional Heisenberg Lie algebra, that $ B_{\mu,\nu} $ is contained in the stabilizer of the linear form $ \ell_{\be,\mu,\nu} $ and that $ T_{\mu,\nu} $ is contained in the kernel of the restriction of $ \ell_{\be,\mu,\nu} $ to the centre of $ \g_{\mu,\nu} $. 
 In the
dual basis $ \B^*_{\mu,\nu} $ of the basis $\B_{\mu,\nu}$ the orbit $ \O_{\be,\mu,\nu} $ (for some $\be\in\R$) of the
element 
\begin{eqnarray*}
\ell_{\be, \mu,\nu}=\be B_{\mu,\nu}^*+\mu U^*+\nu V^*, 
\end{eqnarray*}
is given by:
\begin{eqnarray}\label{seclayortwfi}
 \O_{\be,\mu,\nu}&=&\{a A_{\mu,\nu}^* +\be B_{\mu,\nu}^*+c C^*+\mu U^*+\nu V^*, a, c \in\R \}. 
\end{eqnarray}
 The stabilizer of $\ell_{\be,\mu,\nu}$ is the set\\ 
$$\g_{5,2}(\ell_{\be,\mu,\nu})=span\{
B_{\mu,\nu},U,V\}.$$
We can take as polarization at $ \ell_{\be,\mu,\nu} $ the sub-algebra
\begin{eqnarray*}
\p:=\textrm{span}\{A,B,U,V\},\ P:=\exp(\p).
\end{eqnarray*}
We denote by $ \GA^{5,2}_1 $ the orbit space of this layer and we parametrize it by
\begin{eqnarray}\label{ga1}
\GA^{5,2}_1:=\{\ell_{\be,\mu,\nu}\equiv(\be,\mu,\nu), \be\in\R, (\mu,\nu)\in \R^2, r^2=\mu^2+\nu^2\ne 0\}.
\end{eqnarray}
\item The second layer,  denoted  by 
\begin{eqnarray}\label{ga0}
\GA^{5,2}_0=(\g_{5,2}^*/G_{5,2})_{char}\simeq \R^3 
\end{eqnarray}
is the
collection of all characters $\ell_{\al,\be,\rho}=\al A^*+\be B^*+\rho C^*,\
\al,\be,\rho\in\R$. Their orbits are the one point sets
$\{\ell_{\al,\be,\rho}\}$ 
\end{enumerate}
\begin{theorem}\label{topdualft}
$  $
\begin{enumerate}\label{}
\item On the set $ \GA^{5,2}_1 $ the dual topology is Hausdorff.
\item  Let $ \ol O=(\O_{\be_k,\mu_k,\nu_k})_k $ be  a sequence such that $ \lim_k r_k=0 $. Then this sequence has  a converging sub-sequence if and only if $ \liminf_k \vert\be_k\vert $ is finite. 
If $ \ol O $ is properly converging, then, passing to a sub-sequence (also denoted by the same 
symbol for simplicity of notations) we can assume that $ \lim_k \be_k=\be $ exists, that the sequences 
of vectors $(A_k^*=A_{\mu_k,\nu_k}^*)_k, \textrm{ resp. } (B_k^*=B^*_{\mu_k,\nu_k})_k $  converges to a
$ A^*_\iy, \textrm{ resp. }B_\iy^*  $ in $ \g^*_{5,2} $ and then 
\begin{eqnarray*}
L(\ol O)=\R A^*_\iy+\be_\iy B^*_\iy+\R C^*\subset \GA^{5,2}_0.
\end{eqnarray*}

 \end{enumerate}

 \end{theorem}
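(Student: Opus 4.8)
The plan is to carry out everything on the orbit space $\g^*_{5,2}/G_{5,2}$, which by Kirillov's homeomorphism (Section \ref{kirrth}) carries the same topology as $\wh{G_{5,2}}$, and to fix the euclidean structure for which $\B=\{A,B,C,U,V\}$ and its dual basis are orthonormal. Since by (\ref{basisp}) the pair $(A_{\mu,\nu},B_{\mu,\nu})$ is obtained from $(A,B)$ by a rotation, the dual vectors $A_{\mu,\nu}^*,B_{\mu,\nu}^*$ form an orthonormal basis of $\textrm{span}\{A^*,B^*\}$, while $C^*$ stays orthogonal to that plane; I will use this transversality repeatedly.

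For part (1), I would exhibit three continuous functions on $\g^*_{5,2}$ that are constant on the generic orbits and separate them. From (\ref{orgft}) the coordinates $\mu=\sp{\ell}{U}$ and $\nu=\sp{\ell}{V}$ are $\Ad^*$-invariant, and a one-line computation with (\ref{orgft}) shows that $\nu\,\sp{\ell}{A}+\mu\,\sp{\ell}{B}$ is invariant as well; hence on the open set $\{r\ne0\}$ the function $\be=(\nu\,\sp{\ell}{A}+\mu\,\sp{\ell}{B})/r$ is a continuous invariant. These descend to continuous functions on $\GA^{5,2}_1$, and by the parametrization (\ref{ga1}) the resulting map $\O_{\be,\mu,\nu}\mapsto(\be,\mu,\nu)$ is a continuous injection of $\GA^{5,2}_1$ into $\R^3$. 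Since a continuous injection into a Hausdorff space has Hausdorff domain, (1) follows.

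For the first assertion of (2) I use the orbit description (\ref{seclayortwfi}): every $\ell\in\O_{\be_k,\mu_k,\nu_k}$ satisfies $\no{\ell}^2=a^2+c^2+\be_k^2+r_k^2\ge\be_k^2$, so the whole orbit stays at euclidean distance at least $\val{\be_k}$ from the origin. If $\liminf_k\val{\be_k}=\iy$, then $\val{\be_k}\to\iy$, every compact subset of $\g^*_{5,2}$ is eventually disjoint from $\O_{\be_k,\mu_k,\nu_k}$, and by Definition \ref{conin} the sequence of orbits goes to infinity; by Proposition \ref{infycon} it admits no converging subsequence. Conversely, if $\liminf_k\val{\be_k}<\iy$, I pass to a subsequence along which $\be_k\to\be_\iy$, and since $A_k^*,B_k^*$ are unit vectors in the fixed plane $\textrm{span}\{A^*,B^*\}$, a further subsequence gives $A_k^*\to A_\iy^*$ and $B_k^*\to B_\iy^*$; the limit set computed below is then nonempty, so a converging subsequence exists.

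Finally, for a properly converging subsequence I compute all subsequential limits of points $\ell_k\in\O_{\be_k,\mu_k,\nu_k}$. Writing $\ell_k=a_kA_k^*+\be_kB_k^*+c_kC^*+\mu_kU^*+\nu_kV^*$ and using $\mu_k,\nu_k\to0$ together with $\be_kB_k^*\to\be_\iy B_\iy^*$, the constant choices $a_k\equiv a$, $c_k\equiv c$ show that every $aA_\iy^*+\be_\iy B_\iy^*+cC^*$ is a limit point. For the reverse inclusion I project onto the orthogonal decomposition $\g^*_{5,2}=\textrm{span}\{A^*,B^*\}\oplus\R C^*\oplus\textrm{span}\{U^*,V^*\}$: the $C^*$-coordinate of $\ell_k$ is exactly $c_k$, so convergence forces $c_k$ to converge, while the $\textrm{span}\{A^*,B^*\}$-component is $a_kA_k^*+\be_kB_k^*$, whose convergence together with $\be_kB_k^*\to\be_\iy B_\iy^*$ and $\no{a_kA_k^*}=\val{a_k}$ forces $a_k$ to stay bounded and, along a further subsequence, to converge. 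Hence every limit point lies in $\R A_\iy^*+\be_\iy B_\iy^*+\R C^*$, and since $A_\iy^*,B_\iy^*\in\textrm{span}\{A^*,B^*\}$ these are all characters, giving $L(\ol O)=\R A_\iy^*+\be_\iy B_\iy^*+\R C^*\subset\GA^{5,2}_0$. The one genuinely delicate step, and the place I would argue most carefully, is this reverse inclusion: one must exclude runaway limit points produced by $a_k$ or $c_k$ tending to infinity, which is exactly where the transversality of $C^*$ to the $A^*B^*$-plane and the norm identity $\no{a_kA_k^*}=\val{a_k}$ are needed.
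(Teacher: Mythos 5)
Your proposal is correct and follows essentially the same route as the paper's (very terse) proof: it separates the generic orbits by the continuous invariants $\mu,\nu,\be$, uses the evaluation of the orbit at $B_{\mu_k,\nu_k}$ (equivalently the norm bound $\no{\ell}\geq\val{\be_k}$) to force escape to infinity when $\liminf_k\val{\be_k}=+\iy$, and reads the limit set off the explicit parametrization (\ref{seclayortwfi}) after passing to subsequences along which $\be_k$, $A_k^*$, $B_k^*$ converge. The only difference is that you carefully supply the details --- in particular the reverse inclusion for $L(\ol O)$ via the orthogonal decomposition --- which the paper dismisses as ``evident'' or ``follows from the description''.
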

\begin{proof}
\begin{enumerate}\label{}
\item The point 1) is evident. 
\item  If $ \liminf_k \val{\be_k} $ exists in $ \R $, then we take a sub-sequence (indexed  also by $ (\be_k)_k  $ 
for simplicity of notation) such that $ \lim_k  \be_k=\be $ exists in $ \R $ and such that the sequences of 
vectors $(A_k^*=A_{\mu_k,\nu_k}^*)_k, \textrm{ resp. } (B_k^*=B^*_{\mu_k,\nu_k})_k $  converges to a
$ A^*_\iy, \textrm{ resp. }B_\iy^*  $ in $ \g_{5,2}^* $. It follows then from the description (\ref{seclayortwfi}) of the coadjoint orbits that 
the limit set $ L(\ol O) $   of the sub-sequence  is the set described in the theorem. 
If $ \liminf_k \val{\be_k}=+\iy $, then $ \lim_k \{\val{\langle{O_k},{B_k}\rangle}\}=\{\val{\be_k}\}=+\iy $. Hence $ \ol O $ goes to infinity 

 \end{enumerate}

 \end{proof}

\subsection{ The unitary dual of ${G_{5,2}}$.}
The spectrum of the group $G_{5,2} $ can be identified by Kirillov's orbit theory with the orbit space $ \g^*_{5 , 2} /G_{5,2}$ = $\wh{G_{5,2}}:= \GA^{5,2}_1\cup\GA^{5,2}_0 $. 
For every $(\be,\mu,\nu)\in\R\times(\R\times\R)^*$, we take the irreducible  representation 
$ \pi_{\be,\mu,\nu} =\ind{P}{G_{5,2}}\ch_{\be,\mu,\nu} $ which is associated to $ O_{\be,\mu,\nu} $. This representation acts on the 
Hilbert space $L^2(\R)$ and  is given by the formula 
\begin{eqnarray*}
 & &\pi_{\be,\mu,\nu}(x,b,y,z,t)\xi(s)\\
 &=&e^{-2i\pi(\be b +r_{\mu,\nu}z+r_{\mu,\nu}x s-\frac{r_{\mu,\nu}}{2}xy)}\xi(s-y),\ s\in\R,\ \xi\in L^2(\R),\
(x,b,y,z,t)\in G_{5,2},
\end{eqnarray*}
using the coordinates coming from the basis 
$$ \B^{\mu,\nu}:=\{A_{\mu,\nu},B_{\mu,\nu},C,Z_{\mu,\nu},T_{\mu,\nu}\} $$ 
and formula (\ref{hausmulhei}).
Since $G_{5,2}$ is nilpotent, by Lie's theorem every irreducible finite
dimensional representation of $G_{5,2}$ is one-dimensional. Any
one-dimensional representation is a unitary character $\chi_{\al,\be,\rho},\
(\al,\be,\rho)\in\R^3$, of $G_{5,2}$ which is given by
$$\chi_{\al,\be,\rho}(a,b,c,u,v)=e^{-2i\pi(\al a+\be b+\rho c)},\
(a,b,c,u,v)\in G_{5,2}.$$ 
For $F\in L^1(G_{5,2})$, let 
$$\widehat
F(\al,\be,\rho):=\chi_{\al,\be,\rho}(F)=\int_{G_{5,2}}F(a,b,c,0,0)e^{
-2\pi i(\al a+\be b+\rho c)}dadbdc,\ \al,\be,\rho\in\R, $$
and $$\Vert
F\Vert_{\infty,0}:=\underset{\al,\be,\rho\in\R}{\sup}\vert\chi_{\al,\be,\rho}
(F)\vert=\Vert\hat F\Vert_{\infty}.$$
\begin{definition}
Define for $a\in C^*(G_{5,2})$ its  Fourier transform $\F(a)\in L^{\iy}(\wh{G_{5,2}})$
by
$$\F(a)(\be,\mu,\nu):=\pi_{\be,\mu,\nu}(a)\in
\B(L^2(\R)),\
(\be,\mu,\nu)\in\GA^{5,2}_1$$
and 
$$\F(a)(\al,\be,\rho):=\chi_{\al,\be,\rho}(a), \ \al,\be,\rho\in\R. $$
\end{definition}
\begin{definition}
 For all $F\in L^1(G_{5,2})$, and $ (\be,\mu,\nu)\in \GA^{5,2}_1 $ the   operator
$\pi_{\be,\mu,\nu}(F)$ is a kernel operator with kernel function $ F_{\be,\mu,\nu} $ given by:
\begin{eqnarray*}\label{kernfuncbemunu}
F_{\be,\mu,\nu}(s,x)&=&\wh F^{{\mu,\nu}}(\frac{r_{\mu,\nu}}{2}(s+x)A^*_{\mu,\nu}+\be B^*_{\mu,\nu}+(s-y)C^*+r_{\mu,\nu}Z_{\mu,\nu}^*) 
\end{eqnarray*}
in the coordinates coming from the basis $ \B^{\mu,\nu} $. Here the symbol $ \wh F^{\mu,\nu} $ denotes the function
\begin{eqnarray*}
\wh F^{\mu,\nu}(s,q):=\int_{P}F(\exp{(s C)p})e^{-2\pi i \langle{q},{\log p}\rangle}dp, s\in\R,q\in\p^*.
\end{eqnarray*}
Indeed, for 
 for $\xi\in L^2(\R)$  and $s\in\R$ we have 
\begin{eqnarray}\label{pialmuga}
 \nn
\pi_{\be,\mu,\nu}(F)\xi(s)&=&\int_{\R^{5}}F(x,b,y,z,t)\pi_{\be,\mu,\nu} (x,b,y,z,t)\xi(s)dxdbdydzdt\\
\nn&=& \int_{\R^5}F(x,b,y,z,t)e^{-2i\pi(\be b +r_{\mu,\nu}z+r_{\mu,\nu}x s-\frac{r_{\mu,\nu}}{2}xy)}\xi(s-y)dxdbdydzdt\\
\nn&=& \int_\R\left(\int_{\R^4}F(x,b,s-y,z,t)e^{-2i\pi(\be b +r_{\mu,\nu} z+\frac{r_{\mu,\nu}(s+y)x}{2})}\xi(y)dxdbdzdt\right)dy\\
&=&\int_\R\wh F^{{\mu,\nu}}(\frac{r_{\mu,\nu}}{2}(s+x)A^*_{\mu,\nu}+\be B^*_{\mu,\nu}+(s-y)C^*+r_{\mu,\nu}Z_{\mu,\nu}^*)\xi(y)dy.
\end{eqnarray}          
\end{definition}
The following proposition is a consequence of Formula (\ref{pialmuga}).
\begin{proposition}
 For any $a\in C^*(G_{5,2})$ and $(\be,\mu,\nu)\in\GA^{5,2}_1$,  the operator $\pi_{\be,\mu,\nu}(a)$ is compact, the
mapping $\GA^{5,2}_1\to
B(L^2(\R)):(\be,\mu,\nu)\mapsto\pi_{\be,\mu,\nu}(a)$ is norm continuous in $(\be,\mu,\nu)$ and tending to
$0$ for $r_{\mu,\nu}$ going to infinity. 
\end{proposition}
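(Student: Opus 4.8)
The plan is to establish all three assertions first for $F$ in the dense subalgebra $L^1_c=L^1_{c,\p}$ and then to pass to a general $a\in C^*(G_{5,2})$ by density, using that each $\pi_{\be,\mu,\nu}$ is contractive. Throughout I work from the kernel description supplied by Formula~(\ref{pialmuga}): for $F\in L^1_c$ the operator $\pi_{\be,\mu,\nu}(F)$ is the kernel operator on $L^2(\R)$ with kernel
\[
F_{\be,\mu,\nu}(s,x)=\wh F^{\mu,\nu}\Big(\frac{r_{\mu,\nu}}{2}(s+x)A^*_{\mu,\nu}+\be B^*_{\mu,\nu}+(s-x)C^*+r_{\mu,\nu}Z^*_{\mu,\nu}\Big).
\]
The structural fact driving everything is that, by the very definition of $L^1_c$, the function $\wh F^{\p}$ lies in $C_c^{\iy}(G/P\times\p^*)$, hence is compactly supported in $\p^*$; since the bases $\B^{\mu,\nu}$ arise from the fixed basis $\{A,B,U,V\}$ of $\p$ by an orthogonal change of coordinates depending continuously on $(\mu,\nu)$, the support of $\wh F^{\mu,\nu}$ in $\p^*$ stays inside one fixed ball of radius $R$, independently of $(\mu,\nu)$.

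For compactness I note that, for $F\in L^1_c$, the kernel $F_{\be,\mu,\nu}$ is continuous and compactly supported: the transverse argument forces $\val{s-x}\le R$, while the $A^*_{\mu,\nu}$-coordinate forces $\frac{r_{\mu,\nu}}{2}\val{s+x}\le R$, and on $\GA^{5,2}_1$ one has $r_{\mu,\nu}>0$, so $(s,x)$ is confined to a bounded set. Thus $F_{\be,\mu,\nu}\in L^2(\R^2)$ and $\pi_{\be,\mu,\nu}(F)$ is Hilbert--Schmidt, in particular compact. For arbitrary $a$ I choose $F_n\in L^1_c$ with $\no{a-F_n}\to 0$; then $\noop{\pi_{\be,\mu,\nu}(a)-\pi_{\be,\mu,\nu}(F_n)}\le\no{a-F_n}\to 0$, and since the compact operators form a norm-closed ideal, $\pi_{\be,\mu,\nu}(a)$ is compact.

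For norm continuity I fix a point of $\GA^{5,2}_1$ and a compact neighbourhood $N$ on which $r_{\mu,\nu}$ is bounded away from $0$ and from $\iy$. The two support bounds above then confine every kernel $F_{\be,\mu,\nu}$, $(\be,\mu,\nu)\in N$, to one common compact set $K\subset\R^2$. Moreover $(s,x,\be,\mu,\nu)\mapsto F_{\be,\mu,\nu}(s,x)$ is continuous, hence uniformly continuous on $K\times N$. Consequently, given $\ve>0$, for $(\be',\mu',\nu')$ close enough to $(\be,\mu,\nu)$ in $N$ we obtain $\val{F_{\be,\mu,\nu}(s,x)-F_{\be',\mu',\nu'}(s,x)}\le\va(s-x)$ with a continuous compactly supported $\va$ of arbitrarily small $L^1$-norm; the Young-type estimate recalled in Subsection~\ref{tools} yields $\noop{\pi_{\be,\mu,\nu}(F)-\pi_{\be',\mu',\nu'}(F)}\le\no{\va}_1$. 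A density and contractivity argument as above upgrades this to norm continuity of $(\be,\mu,\nu)\mapsto\pi_{\be,\mu,\nu}(a)$ for every $a$. The decay as $r_{\mu,\nu}\to\iy$ is even cleaner: the $Z^*_{\mu,\nu}$-coordinate of the point at which $\wh F^{\mu,\nu}$ is evaluated equals $r_{\mu,\nu}$, so that point has Euclidean norm at least $r_{\mu,\nu}$; as soon as $r_{\mu,\nu}>R$ it leaves the support ball and $F_{\be,\mu,\nu}\equiv 0$, whence $\pi_{\be,\mu,\nu}(F)=0$ for $F\in L^1_c$ and $r_{\mu,\nu}$ large. Approximating $a$ by such $F$ and using contractivity gives $\limsup_{r_{\mu,\nu}\to\iy}\noop{\pi_{\be,\mu,\nu}(a)}\le\no{a-F}$, which is arbitrarily small.

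The only genuinely delicate point is the uniform bookkeeping that makes these density arguments work across the spectral parameter: one must verify that the kernels share a common compact support as $(\be,\mu,\nu)$ ranges over a neighbourhood, which crucially forces $r_{\mu,\nu}$ to stay away from $0$ (the feature that separates the layer $\GA^{5,2}_1$ from the characters $\GA^{5,2}_0$), and that the orthogonality of the coordinate changes $\B^{\mu,\nu}$ keeps the support radius $R$ independent of $(\mu,\nu)$. Everything else is a routine combination of Hilbert--Schmidt estimates, Young's inequality, and the contractivity of the representations.
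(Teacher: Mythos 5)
Your proof is correct and follows exactly the route the paper intends: the paper states the proposition as "a consequence of Formula (\ref{pialmuga})" without further detail, and your argument supplies precisely those details (compact support of $\wh F^{\p}$ for $F\in L^1_c$, the resulting Hilbert--Schmidt/support bounds on the kernels, the Young-type estimate from the preliminaries, and density plus contractivity). Your observations that $r_{\mu,\nu}$ must stay away from $0$ on a compact neighbourhood in $\GA^{5,2}_1$ and that the orthogonality of the basis change $\B^{\mu,\nu}$ keeps the support radius uniform are the right points to make explicit.
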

\begin{definition}\label{}
\rm     Let as before for $ \mu^2+\nu^2\ne 0 $, $ A_{\mu,\nu}:=\tilde\mu A-\tilde\nu B $,  $
B_{\mu,\nu}:=\tilde \nu A+\tilde\mu B $.
 Choose a Schwartz-function
$\eta$ in $\S(\R)$ with $L^2-$norm equal to $1$. For $(\al,\rh)\in\R^2$ we
define the function $\eta_{\mu,\nu}(\al,\rh)$ by
\begin{eqnarray}\label{etade}
  & &\eta_{\mu,\nu}(\al,\rh)(s):= r_{\mu,\nu}^{\frac{1}{4}}
e^{2i\pi s\rh}\eta\left( r_{\mu,\nu}^{\frac{1}{2}}(s+\frac{\al}{r_{\mu,\nu}})\right),\ s\in\R.
\end{eqnarray}

 \end{definition}

\subsection{ A $C^*-$condition.}
The $ C^* $-conditions for the group $ G_{5,2} $ can be copied from the corresponding conditions for the Heisenberg groups (see \cite{Lud-Tur}).
\begin{lemma}
 Let $\xi\in\S(\R)$. Then,
$$\xi=\frac{1}{ r_{\mu,\nu}}\int_{\R^2}\langle\xi,\eta_{\mu,\nu}(\al,\rh)\rangle\eta_{\mu,\nu}(\al,\rh)d\al d\rh.$$ 
\end{lemma}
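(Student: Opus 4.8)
The plan is to recognize the asserted identity as the resolution-of-identity (reproducing) formula for the modulation–translation family generated by the single normalized window $\eta$; equivalently, it is the orthogonality relation for the Schrödinger representation of the Heisenberg group that underlies $\pi_{\be,\mu,\nu}$. This is exactly the phenomenon that makes the $C^*$-conditions copyable from the Heisenberg case. I would establish it by a direct computation in which the frequency variable $\rh$ is integrated out first, using ordinary Fourier inversion.

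Write $r:=r_{\mu,\nu}$ and split the modulation off from the window by setting $g_\al(s):=r^{\frac14}\eta\bigl(r^{\frac12}(s+\tfrac{\al}{r})\bigr)$, so that $\eta_{\mu,\nu}(\al,\rh)(s)=e^{2i\pi s\rh}g_\al(s)$. With the inner-product convention $\sp{\cdot}{\cdot}$ of the paper one then gets
$$\sp{\xi}{\eta_{\mu,\nu}(\al,\rh)}=\int_\R \xi(s)\,\ol{g_\al(s)}\,e^{-2i\pi s\rh}\,ds=\widehat{\xi\,\ol{g_\al}}(\rh),$$
namely the Fourier transform of the Schwartz function $\xi\,\ol{g_\al}$ evaluated at $\rh$. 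Hence, for fixed $\al$, the inner $\rh$-integral of $\sp{\xi}{\eta_{\mu,\nu}(\al,\rh)}\,\eta_{\mu,\nu}(\al,\rh)(x)=g_\al(x)\,e^{2i\pi x\rh}\,\widehat{\xi\,\ol{g_\al}}(\rh)$ returns, by Fourier inversion, the value
$$\int_\R \sp{\xi}{\eta_{\mu,\nu}(\al,\rh)}\,\eta_{\mu,\nu}(\al,\rh)(x)\,d\rh=g_\al(x)\,(\xi\,\ol{g_\al})(x)=\xi(x)\,\val{g_\al(x)}^2.$$
It then remains to integrate over $\al$ and divide by $r$: since $\val{g_\al(x)}^2=r^{\frac12}\val{\eta(r^{\frac12}(x+\tfrac{\al}{r}))}^2$, the substitution $u=r^{\frac12}(x+\tfrac{\al}{r})$ gives $\tfrac1r\int_\R\val{g_\al(x)}^2\,d\al=\int_\R\val{\eta(u)}^2\,du=\no{\eta}_2^2=1$, so the whole expression collapses to $\xi(x)$, as claimed.

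The only genuinely delicate point is the legitimacy of interchanging the $\al$- and $\rh$-integrations and of performing the $\rh$-integral first. I expect this convergence bookkeeping, rather than the algebra, to be the real obstacle, but it is handled as follows: the map $(\al,\rh)\mapsto \sp{\xi}{\eta_{\mu,\nu}(\al,\rh)}$ is, up to the fixed dilation by $r$, the short-time Fourier transform of the Schwartz function $\xi$ against the Schwartz window $\eta$, hence is itself a Schwartz function on $\R^2$ and in particular decays rapidly in both $\al$ and $\rh$; meanwhile $\val{\eta_{\mu,\nu}(\al,\rh)(x)}$ is bounded in $\rh$ and decays rapidly in $\al$ for each fixed $x$. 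Thus the double integrand is absolutely integrable on $\R^2$, Fubini applies, and the iterated computation above is justified.
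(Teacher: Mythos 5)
Your proof is correct. The paper itself offers no argument for this lemma --- it simply refers to Lemma $2.8$ of \cite{Lud-Tur} --- and your direct computation (Fourier inversion in the $\rho$-variable, followed by the substitution in $\alpha$ that uses $\Vert\eta\Vert_2=1$, with Fubini justified by the rapid decay of the short-time Fourier transform of a Schwartz function against a Schwartz window) is exactly the standard resolution-of-identity argument on which that cited lemma rests, so you have in effect supplied the details the paper omits rather than taken a different route.
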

\begin{proof}
 The proof is the same as the proof of  Lemma $2.8$ in [Lud,Tur].
\end{proof}
\begin{definition}
 $1.$\ For all $(\al,\rh)\in\R^2$ and $k\in\N$, let $P_{{\mu,\nu}(\al,\rh)}$ be the
orthogonal projection onto  the one dimensional subspace
$\C\eta_{\mu,\nu}(\al,\rh)$.\\
$2.$ Define for $h\in C_0(\R^3)$  the linear operator
\begin{eqnarray}\label{nuP}
 \sigma_{\be,\mu,\nu}(h):=\frac{1}{ r_{\mu,\nu}}\int_{\R^2}
h(\al A_{\mu,\nu}^*+\be B_{\mu,\nu}^*+ \rh C^*)P_{\mu,\nu(\al,\rh)}d\al d\rho
\end{eqnarray}
\end{definition}
\begin{proposition}(see Proposition 2.11 in \cite{Lud-Tur})
 \begin{enumerate}
  \item For every ${(\be,\mu,\nu)}\in\GA^{5,2}_1$ and $h\in\S(\R^2)$ the integral $(\ref{nuP})$ converges in operator norm.
\item   $ \si_{\be,\mu,\nu}(h) $ is compact and
$\no{\si_{\be,\mu,\nu}(h)}_{op}\leq\no{h}_{\iy}$.
\item  The mapping $ \si_{\be,\mu,\nu}: C_0(\R^{2})\to \F_1 $
is involutive, i.e. $ \si_{\be,\mu,\nu}(h^*)=\si_{\be,\mu,\nu}(h)^* , h\in C^*(\R^{2})$, where by
$\si_{\be,\mu,\nu}$ we denote also  the extension of $\si_{\be,\mu,\nu}$ to $C_0(\R^{2})$.
 \end{enumerate}

\end{proposition}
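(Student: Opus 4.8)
The plan is to prove the three assertions in the order stated, exploiting the fact that the operator $\sigma_{\be,\mu,\nu}$ is, up to the scaling factors $r_{\mu,\nu}$ built into $\eta_{\mu,\nu}(\al,\rh)$, the same averaging-over-rank-one-projections construction that appears for the Heisenberg groups in \cite{Lud-Tur}. The family $\{\eta_{\mu,\nu}(\al,\rh)\}_{(\al,\rh)\in\R^2}$ is a continuous tight frame (this is exactly the content of the preceding Lemma), so $\sigma_{\be,\mu,\nu}(h)$ should be read as a weighted superposition of the one-dimensional projections $P_{\mu,\nu(\al,\rh)}$ against the symbol $h$ evaluated along the affine plane $\al A_{\mu,\nu}^*+\be B_{\mu,\nu}^*+\rh C^*$.

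For part (1), convergence in operator norm, I would first treat $h\in\S(\R^2)$. I would bound the integrand in operator norm by $\noop{P_{\mu,\nu(\al,\rh)}}\,|h(\cdot)|=|h(\cdot)|$ and show that after the substitution absorbing the $\frac{1}{r_{\mu,\nu}}$ factor the integral $\frac{1}{r_{\mu,\nu}}\int_{\R^2}|h(\al A_{\mu,\nu}^*+\be B_{\mu,\nu}^*+\rh C^*)|\,d\al\,d\rho$ is finite because $h$ is Schwartz; Remark \ref{intmapp} then gives convergence of the vector-valued integral in the Bochner sense, hence in operator norm. For part (2), compactness follows because the integral is an operator-norm limit of finite linear combinations of the rank-one operators $P_{\mu,\nu(\al,\rh)}$, and $\K(L^2(\R))$ is norm-closed; the norm estimate $\no{\si_{\be,\mu,\nu}(h)}_{op}\leq\no h_\iy$ is the place where the frame identity of the Lemma does the real work, since testing $\si_{\be,\mu,\nu}(h)$ against a unit vector $\xi$ and using $\frac{1}{r_{\mu,\nu}}\int_{\R^2}\sp{\xi}{\eta_{\mu,\nu}(\al,\rh)}\eta_{\mu,\nu}(\al,\rh)\,d\al\,d\rho=\xi$ lets me write $\sp{\si_{\be,\mu,\nu}(h)\xi}{\xi}=\frac{1}{r_{\mu,\nu}}\int_{\R^2}h(\cdots)|\sp{\xi}{\eta_{\mu,\nu}(\al,\rh)}|^2\,d\al\,d\rho$, which is dominated in absolute value by $\no h_\iy\,\no\xi_2^2$.

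For part (3), involutivity, I would compute $\si_{\be,\mu,\nu}(h)^*$ by taking adjoints inside the integral: each $P_{\mu,\nu(\al,\rh)}$ is self-adjoint, so $\big(h(\cdots)P_{\mu,\nu(\al,\rh)}\big)^*=\overline{h(\cdots)}\,P_{\mu,\nu(\al,\rh)}$, and since the affine argument $\al A_{\mu,\nu}^*+\be B_{\mu,\nu}^*+\rh C^*$ is real the conjugate of $h$ evaluated there is precisely $h^*$ evaluated there; integrating back gives $\si_{\be,\mu,\nu}(h^*)$. The extension to all of $C_0(\R^2)$ is then a density argument: $\S(\R^2)$ is dense in $C_0(\R^2)$ and, by part (2), $\si_{\be,\mu,\nu}$ is $\no{}_\iy$-contractive, so it extends uniquely to a bounded map on $C_0(\R^2)$ for which (1), (2), (3) persist by continuity.

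The main obstacle I anticipate is the norm estimate in part (2): the naive triangle-inequality bound through Remark \ref{intmapp} only yields a constant depending on the $L^1$-mass of $h$ along the plane and not the clean $\no h_\iy$. The correct route is the quadratic-form computation above, which converts the operator-norm bound into a pointwise bound under the frame measure $\frac{1}{r_{\mu,\nu}}|\sp{\xi}{\eta_{\mu,\nu}(\al,\rh)}|^2\,d\al\,d\rho$ that integrates to $\no\xi_2^2$; verifying that this probability-measure interpretation is legitimate (positivity and total mass one from the Lemma) is the crux, and once it is in place the estimate $\noop{\si_{\be,\mu,\nu}(h)}\leq\no h_\iy$ is immediate. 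Since all of this mirrors Proposition 2.11 of \cite{Lud-Tur} with $r_{\mu,\nu}$ playing the role of the Heisenberg parameter $|\la|$, I would reference that argument for the routine estimates and only spell out where the parameters $(\be,\mu,\nu)$ enter through the basis $\B_{\mu,\nu}$.
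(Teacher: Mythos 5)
Your overall plan coincides with what the paper does: it gives no independent argument here and simply defers to Proposition 2.11 of \cite{Lud-Tur}, whose proof is exactly the Bochner-integral/tight-frame argument you reconstruct (with $r_{\mu,\nu}$ in place of $|\la|$). Parts (1) and (3) as you present them are fine: absolute integrability of $h$ along the plane gives norm convergence of the operator-valued integral, compactness follows since $\K(L^2(\R))$ is norm closed, and involutivity follows from self-adjointness of each $P_{\mu,\nu(\al,\rh)}$ plus density of $\S(\R^2)$ in $C_0(\R^2)$ once the contractivity in (2) is available.

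The one step that does not go through as written is the norm estimate in (2). The diagonal computation
$\sp{\si_{\be,\mu,\nu}(h)\xi}{\xi}=\frac{1}{r_{\mu,\nu}}\int_{\R^2}h(\cdots)\,\val{\sp{\xi}{\eta_{\mu,\nu}(\al,\rh)}}^2\,d\al\,d\rh$
only bounds the \emph{numerical radius} of $\si_{\be,\mu,\nu}(h)$ by $\no h_\iy$; for a non-normal operator this yields $\noop{\si_{\be,\mu,\nu}(h)}\leq 2\no h_\iy$, not the claimed constant $1$ (and $\si_{\be,\mu,\nu}(h)$ is self-adjoint only for real-valued $h$, so splitting into real and imaginary parts still loses a factor $2$). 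The correct route is the polarized version: for $\xi,\et\in L^2(\R)$,
\begin{eqnarray*}
\val{\sp{\si_{\be,\mu,\nu}(h)\xi}{\et}}
&\leq&\no h_\iy\,\frac{1}{r_{\mu,\nu}}\int_{\R^2}\val{\sp{\xi}{\eta_{\mu,\nu}(\al,\rh)}}\,\val{\sp{\eta_{\mu,\nu}(\al,\rh)}{\et}}\,d\al\,d\rh\\
&\leq&\no h_\iy\left(\frac{1}{r_{\mu,\nu}}\int_{\R^2}\val{\sp{\xi}{\eta_{\mu,\nu}(\al,\rh)}}^2\right)^{1/2}\left(\frac{1}{r_{\mu,\nu}}\int_{\R^2}\val{\sp{\et}{\eta_{\mu,\nu}(\al,\rh)}}^2\right)^{1/2}
=\no h_\iy\no\xi_2\no\et_2,
\end{eqnarray*}
where the last equality uses the frame identity of the preceding Lemma twice. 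With this substitution your argument is complete and matches the one the paper cites.
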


\begin{theorem}
 Let $a\in C^*(G_{5,2})$ and let $\va$ be the operator field
$\va=\F(a)$. Then the function $ \va(\be):(\al,\rh)\to \F(a)(\al,\rh) $ is contained $ C_0(\R^2) $. Let $ \ol\ga=(\ga_k=(\be_k,\mu_k,\nu_k))_k $ be a properly converging sequence in $ \GA^{5,2}_1 $ having its limit set $ L(\ol\ga)=\R A_\iy^* +\be B_\iy^* +\R C^* $ in $ \GA^{5,2}_0 $.  
Then
\begin{eqnarray}
 \underset{k\to\infty}{\lim}\Vert
\va(\ga_k)-\sigma_{\ga_k}(\va(\be))\Vert_{op}=0.
\end{eqnarray}
\end{theorem}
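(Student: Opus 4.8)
The goal is to show that the kernel operators $\va(\ga_k)=\pi_{\be_k,\mu_k,\nu_k}(a)$ are approximated in operator norm by the operators $\si_{\ga_k}(\va(\be))$ built from the limiting character field, along a properly converging sequence with $r_k\to 0$. Since the whole construction is copied from the Heisenberg case, the strategy is to reduce to an explicit computation on $L^2(\R)$ and then exploit the structure of the projections $P_{\mu_k,\nu_k(\al,\rh)}$. First I would reduce to a dense subalgebra: by the density of $L^1_c$ in $C^*(G_{5,2})$ and the uniform bound $\noop{\si_{\ga_k}(h)}\le\no h_\iy$ from the preceding Proposition, it suffices to prove the limit for $a=F$ with $F\in L^1_c$, for which $\wh F^{\mu,\nu}$ is smooth with compact support and the kernel function $F_{\be,\mu,\nu}(s,x)$ of formula (\ref{pialmuga}) is explicit and nicely decaying.

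\textbf{Main steps.}
Next I would write both operators as kernel operators and compare their kernels. The operator $\pi_{\be_k,\mu_k,\nu_k}(F)$ has the kernel $F_{\be_k,\mu_k,\nu_k}(s,x)$ given after (\ref{pialmuga}); the operator $\si_{\ga_k}(\va(\be))$, by its definition (\ref{nuP}) as an integral of rank-one projections onto the $\eta_{\mu_k,\nu_k}(\al,\rh)$, also has an explicit kernel, obtained by inserting the definition (\ref{etade}) of $\eta_{\mu,\nu}(\al,\rh)$ and integrating in $(\al,\rh)$. The key point is that the character field $\va(\be)(\al,\rh)=\F(F)(\al A^*_\iy+\be B^*_\iy+\rh C^*)$ is exactly the boundary value, as $r\to 0$, of the symbol $\wh F^{\mu,\nu}$ that appears in the kernel of $\pi_{\be,\mu,\nu}(F)$; this is where the fact that $\ol\ga$ converges with limit set $\R A^*_\iy+\be B^*_\iy+\R C^*$ enters, forcing $A^*_{\mu_k,\nu_k}\to A^*_\iy$ and $B^*_{\mu_k,\nu_k}\to B^*_\iy$. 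I would then estimate the difference of the two kernels pointwise, producing a bound of the form $\val{F(x,y)}\le\va(x-y)$ after a change of variables, so that the bound on the operator norm by $\no\va_1$ (the Young-inequality fact recalled in Section \ref{tools}) applies. As $r_k\to 0$ the rescaling $r_k^{1/2}$ in $\eta_{\mu_k,\nu_k}$ concentrates the Gaussian, and the Schwartz decay of $\eta$ together with the smoothness and compact support of $\wh F^{\mu,\nu}$ lets dominated convergence drive the difference of kernels, and hence the operator norm, to $0$.

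\textbf{The main obstacle.}
The hard part will be controlling the off-diagonal behaviour of the two kernels uniformly in $k$ as $r_k\to 0$. The representation kernel $F_{\be_k,\mu_k,\nu_k}(s,x)$ carries the oscillatory factor $e^{-2i\pi r_k(\cdots)}$ and the shift $s-y$, while the $\si$-kernel is an average of products $\eta_{\mu_k,\nu_k}(\al,\rh)(s)\ol{\eta_{\mu_k,\nu_k}(\al,\rh)(x)}$ whose support in the $s$-variable spreads like $r_k^{-1/2}$; one must check that these two mechanisms match to leading order and that the mismatch is integrable with an $L^1$ bound tending to $0$. I expect this to come down to the same computation underlying the Heisenberg $C^*$-condition in \cite{Lud-Tur}, so I would organize the proof to isolate a single change of variables that simultaneously absorbs the $r_k^{1/2}$-scaling and the phase, reducing everything to the convergence already established there, and then invoke Proposition \ref{weakcon} to identify the weak limit and confirm it is the correct character field.
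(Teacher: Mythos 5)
Your proposal is correct and takes essentially the same route as the paper: the paper's entire proof is the observation that the argument is identical to that of Theorem 2.12 in \cite{Lud-Tur} for the Heisenberg group, since in the basis $\B_{\mu,\nu}$ the representation $\pi_{\be,\mu,\nu}$ is a Heisenberg-type representation with central parameter $r_{\mu,\nu}\to 0$ twisted by the character $e^{-2i\pi\be b}$. Your outline (density of $L^1_c$, comparison of the explicit kernel of $\pi_{\ga_k}(F)$ with the coherent-state kernel of $\si_{\ga_k}$, Young's inequality, and concentration of $\eta_{\mu_k,\nu_k}(\al,\rh)$ as $r_k\to 0$) is precisely the content of that cited computation.
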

\begin{proof}
 The proof is the same as that of  Theorem $2.12$ in [Lud-Tur]. 
\end{proof}

 \section{The $C^*-$algebra of the group $G_{5,3}.$}
 
 Recall that the Lie algebra of    $\g_{5,3}$ is  spanned by the basis $\B=\text{span}\{A,B,C,U,V\}$ equipped with the Lie brackets
 $$[A,B]=U,\ [A,U]=V,\ [B,C]=V.$$

 This Lie algebra has  a one-dimensional centre $\z=\R V.$ The group $G_{5,3}$ can be realized as $\R^5$ with the Campbell-Baker-Hausdorff 
 multiplication 
 \begin{eqnarray}\label{multiG53}
  \nn& &(a,b,c,u,v)(a',b',c',u',v')\\
  \nn&=&(a+a',b+b',c+c',u+u'-a'b,v+v'-a'u+\frac{a'^2b}{2}-\frac{b'c}{2}+\frac{bc'}{2}).  
 \end{eqnarray}
For all $(a,b,c,u,v)\in G_{5,3},\ (\al,\be,\rh,\mu,\nu)\in\g_{5,3}$ we obtain the following expression for $\Ad^*(a,b,c,u,v):$
\begin{eqnarray}\label{defAd*G53}
 \nn& &\Ad^*(a,b,c,u,v)(\al,\be,\rh,\mu,\nu)\\
 &=&(\al-\mu b-\nu u-\nu\frac{ab}{2},\be+\mu a-\nu c+\nu \frac{a^2}{2},\rh+\nu b,\mu+a \nu,\nu).
\end{eqnarray}
We give now a description of the co-adjoint orbits:
\begin{enumerate}
 \item The generic orbits: They have a non-zero value $\nu$ on the central element $V.$ It follows from (\ref{defAd*G53}) that we can 
 characterize such an orbit $\O$ by $\nu\in\R.$ There exists in each generic orbit $\O_\nu$ a unique element $\ell_\nu$ which 
 is zero on the vectors $A,B,C,U,\ i.e.\ \ell_\nu=\nu V^*$ and so 
 $$\O_\nu=\{(a,b,c,u,\nu),\ (a,b,c,u)\in\R^4\}.$$
 We denote by $\GA_2^{5,3}=(\g_{5,3}^*/G_{5,3})_{\text{gen}}$ this family of generic co-adjoint orbits, parameterized by the set 
 $\GA_2^{5,3}:=\{\ell_\nu\equiv\nu;\ \nu\in\R\}.$  
 \item The second layer is given by the set of linear functionals, which are $0$ on $V,$ but not $0$ on $U$, we can characterize such 
 an orbit $\O$ by the pair $(\rh,\mu)\in\R^2$ and so 
 $$\O_{\rh,\mu}=\{(a,b,\rh,\mu,0),\ (a,b)\in\R^2\}.$$
 \item The last layer, denoted by $\GA_0^{5,3}=(\g_{5,3}/G_{5,3})_{\text{char}}\simeq\R^3$ is the collection of all characters 
 $\ell_{\al,\be,\rh}=\al A^*+\be B^*+\rh C^*,\ \al,\be,\rh\in\R.$ Their orbits are the point set $\{\ell_{\al,\be,\rh}\}.$
\end{enumerate}
\begin{theorem}\label{topodualG53}
\begin{enumerate}
 \item On the set $\GA_2^{5,3}$ $($ resp on the set $\GA_1^{5,3}$ $)$ the dual topology is Hausdorff.
 \item Let $\ol\O=(\O_{\nu_k})_k\subset\GA_2^{5,3}$ be a sequence, such that $\underset{k\to\iy}{\lim}\nu_k=0.$ Then $\ol\O$ 
is properly converging and  $L(\ol\O)=\GA_1^{5,3}\cup\GA_0^{5,3}.$
\item Let $\ol\O=(\O_{\rh_k,\mu_k})_k$ be a sequence such that $\underset{k\to\iy}{\lim}\mu_k=0.$ If $\ol\O$ has a limit then $\rh:=\lim_k\rh_k$ 
exists in $\R.$ Conversely, if $\lim_k\rh_k=\rh$ exists,  then the sequence $\ol\O$ converges and $L(\ol\O)=\R A^*+\R B^*+\rh C^*.$
\end{enumerate}
\end{theorem}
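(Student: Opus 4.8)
The strategy is to transport everything through Kirillov's homeomorphism $K\colon\g_{5,3}^*/G_{5,3}\to\wh{G_{5,3}}$ of Section \ref{kirrth}, so that the dual topology becomes the quotient topology of the coadjoint action and convergence of representations can be read off from convergence of orbit representatives via formula (\ref{defAd*G53}). The basic observation is that (\ref{defAd*G53}) exhibits several continuous $\Ad^*$-invariant coordinate functions: the $V^*$-coordinate $\nu$ (the fifth component) is fixed on all of $\g_{5,3}^*$, while on the hyperplane $\{\nu=0\}$ the $U^*$-coordinate $\mu$ (fourth component, $\mu+a\nu$) and the $C^*$-coordinate $\rh$ (third component, $\rh+\nu b$) also become $\Ad^*$-invariant. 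These invariants parametrize the three layers, and each parametrization is a continuous bijection onto a Euclidean set; since such sets are Hausdorff and the invariants separate orbits within a layer, statement (1) follows at once.

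For statement (2) I would first compute $\O_{\nu_k}$ explicitly. Applying (\ref{defAd*G53}) to $\ell_{\nu_k}=\nu_k V^*$ and using $\nu_k\ne0$, one checks that the $A^*,B^*,C^*,U^*$-coordinates sweep out all of $\R$, so that $\O_{\nu_k}$ is the entire affine hyperplane $\{(\al,\be,\rh,\mu,\nu_k):\al,\be,\rh,\mu\in\R\}$. Hence for any target $(\al,\be,\rh,\mu,0)$ in $\{\nu=0\}$ the point $(\al,\be,\rh,\mu,\nu_k)$ lies in $\O_{\nu_k}$ and converges to it, so every orbit meeting $\{\nu=0\}$ belongs to $L(\ol\O)$; conversely, since $\nu$ is continuous and $\nu_k\to0$, every limit point lies in $\{\nu=0\}$. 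The orbits filling this hyperplane are exactly those of $\GA_1^{5,3}$ (where $\mu\ne0$) together with the characters of $\GA_0^{5,3}$ (where $\mu=0$), so $L(\ol\O)=\GA_1^{5,3}\cup\GA_0^{5,3}$. Proper convergence is then automatic, since every subsequence again has its $V^*$-coordinate tending to $0$ and therefore reproduces the same limit set.

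For statement (3) I would use the orbit description $\O_{\rh_k,\mu_k}=\{(\al,\be,\rh_k,\mu_k,0):\al,\be\in\R\}$ coming from (\ref{defAd*G53}) with $\nu=0$ and $\mu_k\ne0$, in which the coordinates $\rh_k$ and $\mu_k$ are constant along the orbit. The forward implication is the conceptual point: if $\ol\O$ converges, pick representatives $\ell_k\in\O_{\rh_k,\mu_k}$ with $\ell_k\to\ell$; the $C^*$-coordinate of $\ell_k$ is the orbit-invariant $\rh_k$, and since this coordinate is continuous, $\rh:=\lim_k\rh_k$ exists and equals the $C^*$-coordinate of $\ell$. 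Conversely, assume $\lim_k\rh_k=\rh$ and recall $\mu_k\to0$. For any $(\al,\be)\in\R^2$ the representative $(\al,\be,\rh_k,\mu_k,0)\in\O_{\rh_k,\mu_k}$ converges to $(\al,\be,\rh,0,0)$, so $\R A^*+\R B^*+\rh C^*\subset L(\ol\O)$; and continuity of the coordinates $\nu,\mu,\rh$ forces every limit point to have $\nu=0$, $\mu=0$ and $C^*$-coordinate $\rh$, giving the reverse inclusion. Hence $L(\ol\O)=\R A^*+\R B^*+\rh C^*\subset\GA_0^{5,3}$.

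The only genuinely delicate points are bookkeeping ones: confirming from (\ref{defAd*G53}) that each generic orbit really is a full hyperplane, so that the candidate representatives actually lie on the orbit, and phrasing proper convergence so that every subsequence manifestly reproduces the same limit set. Both reduce to the single fact that $\nu$ is a global continuous coadjoint invariant and that $\mu$ and $\rh$ become invariant on $\{\nu=0\}$, which is read directly off the last three components of (\ref{defAd*G53}).
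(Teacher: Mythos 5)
Your argument is correct and is exactly the ``straightforward'' proof the paper leaves to the reader: it rests on the explicit orbit descriptions obtained from (\ref{defAd*G53}) together with the continuous coadjoint invariants $\nu$ (globally) and $\mu,\rh$ (on the closed saturated set $\{\nu=0\}$), which is the intended route. The only point worth making explicit is that for the Hausdorffness of $\GA_1^{5,3}$ in its \emph{relative} topology one separates two orbits by saturating small open tubes $\{(c,u)\in B\}$, $\{(c,u)\in B'\}$ and noting that the saturations, though they may meet in generic orbits, have disjoint intersections with $\GA_1^{5,3}$ because $\mu$ and $\rh$ are invariant there.
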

\begin{proof}
 The proof is straight forward.
\end{proof}
\subsection{The Fourier transform for $C^*(G_{5,3}).$}
The spectrum of the group $G_{5,3}$ can be identified by Kirillov's orbit theory with the orbit space 
$\g_{5,3}^*/G_{5,3}=\wh {G_{5,3}}=\GA_2^{5,3}\cup\GA_1^{5,3}\cup\GA_0^{5,3}.$ 
\begin{enumerate}
 \item Let $\ell\in\GA_2^{5,3},$ its orbit $\O_\ell$ is of dimension $4$. A polarization at $\ell_\nu=\nu V^*$ is given by 
 $\p=\p_\nu:=\text{span}\{C,U,V\}.$ We realize then $\pi_\nu:=\ind{P_\nu}{G_{5,3}}\chi_\nu.$ The Hilbert space $L^2(G_{5,3}/P_\nu,\chi_\nu)$ is in fact isometric 
 to $L^2(\R^2),$ let $E:\R^2\to G_{5,3},\ E(a,b):=\exp(aA)\exp(bB)$ and $S =\exp(\R A)\exp(\R B)=E(\R^2).$ Then $G_{5,3}=S.P_\nu$ 
 as topological product and the mapping $\la:L^2(G_{5,3}/P_\nu,\chi_\nu)\to L^2(\R^2)$ defined by $\la\xi(t):=\xi(E(t)),\ t\in\R^2,$
  is unitary. Let us compute the operator $\pi_\ell(F)$ for $F\in C^*(G_{5,3})$ explicitly. For $\xi\in L^2(\R^2),\ s\in S,\ p\in P_\nu$ we have  
  \begin{eqnarray*}
   \pi_\ell(F)\xi(t)=\int_{G_{5,3}/P_\nu}\xi(s)\left(\int_{P_\nu}F(ts\inv p)e^{-2i\pi\langle s.\ell,p\rangle}dp\right)ds.
  \end{eqnarray*}
For $t=E(a,b)$ and $s=E(a',b')$ we get 
\begin{eqnarray*}\label{defopgenG53}
 \pi_\ell(F)\xi(a',b')&=&\int_{\R^2}\wh F^{P_\nu}(a'-a,b'-b,E(a,b).\ell\res{\p_\nu})e^{\frac{-2i\pi\nu a^2(b'-b)}{2}}\xi(a,b)dadb.
 \end{eqnarray*}
\item Let $\ell=\ell_{\rh,\mu}\in\GA_1^{5,3}.$ A polarization at $\ell$ is given by $\p_{\rh,\mu}=\text{span}\{B,C,U,V\}.$ 
We take $\pi_{\rh,\mu}:=\ind{P_{\rh,\mu}}{G_{5,3}}\chi_{\rh,\mu}.$ This representation acts on 
the Hilbert space $L^2(G_{5,3}/P_{\rh,\mu},\chi_{\rh,\mu})\simeq L^2(\R)$ and for $F\in L^1(G_{5,3}),\ \xi\in L^2(\R),$ we have:     
\begin{eqnarray*}
 \pi_{\rh,\mu}(F)\xi(a')=\int_{\R}\wh F^{P_{\rh,\mu}}(a'-a,a\cdot p_{\rh,\mu}) da,\ \text{ where } p_{\rh,\mu}={\ell_{\rh,\mu}}\res{\p_{\rh,\mu}}.
\end{eqnarray*}
\item Any one-dimensional representation is a unitary character $\chi_{\al,\be,\rh},\ (\al,\be,\rh)\in\R^3,$ of $G_{5,3}$ which is given 
by 
$$\chi_{\al,\be,\rh}(a,b,c,u,v)=e^{-2i\pi(\al a+\be b+\rh c)},\ (a,b,c,u,v)\in G_{5,3}.$$
For $F\in L^1(G_{5,3}),$ let 
$$\wh F(\al,\be,\rh):=\chi_{\al,\be,\rh}(F)=\int_{G_{5,3}}F(a,b,c,0,0)e^{-2i\pi(\al a+\be b+\rh c)}dadbdc,\ \al,\be,\rh\in\R.$$
\end{enumerate}
\begin{definition}
 Define for $a\in C^*(G_{5,3})$ its Fourier transform $\F(a)\in l^\iy(\wh{G_{5,3}})$ by 
 \begin{eqnarray}\label{fouriertranG53}
\nn&&\hat
 a(\nu)=\F(a)(\nu):=\pi_{\nu}(a)\in\K(\l2{\R^{2}}),\ \nu\in\GA_2^{5,3};
\\
\nn &&\hat
a(\rh,\mu)=\F(a)(\rh,\mu):=\pi_{\rh,\mu}(a)
\in\K(\l2\R),\ (\rh,\mu)\in\GA_1^{5,3};\\
\nn&&\hat a(\al,\be,\rh)=\F(a)(\al,\be,\rh):=\chi_{\al,\be,\rh}(a)\in C^*(\R^3).
\end{eqnarray}
\end{definition}
\begin{proposition}
  For any $a\in C^*(G_{5,3})$ and $\nu\in\GA_2^{5,3} (\text{resp } (\rh,\mu)\in\GA_1^{5,3}),$ the operator $\pi_\nu(a)$ 
  (resp the operator  $\pi_{\rh,\mu}$) is compact,  
  the mapping $\GA_2^{5,3}\to \K(L^2(\R^2)):\nu\mapsto\pi_\nu(a)$ (resp  the mapping $\GA_1^{5,3}\to\K(L^2(\R)):(\rh,\mu)\mapsto\pi_{\rh,\mu}(a)$)
  is norm continuous in $\nu$ (resp in $(\rh,\mu)$) and tending to $0$ for $\nu$ going to infinity (resp for $\rh$ or $\mu$ going to infinity).

\end{proposition}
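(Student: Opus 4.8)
The plan is to establish the three assertions—compactness, norm continuity, and decay at infinity—in the two layers $\GA_2^{5,3}$ and $\GA_1^{5,3}$ by reducing everything to the kernel representations already computed and then invoking the general tools of Section~\ref{tools}. Because both statements have identical structure, I would treat the generic layer and the layer $\GA_1^{5,3}$ in parallel, handling $\GA_2^{5,3}$ first as the representative case and indicating the obvious modifications for $\GA_1^{5,3}$.

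First I would reduce to the dense subalgebra $L^1_c=L^1_{c,\z}$ relative to the centre $\z=\R V$, since this space is dense in $C^*(G_{5,3})$; by continuity it suffices to prove each claim for $a=F\in L^1_c$ and then pass to the limit. For such $F$ the operator $\pi_\nu(F)$ acts on $L^2(\R^2)$ as the kernel operator with kernel $F_\nu(a',b',a,b)=\wh F^{P_\nu}(a'-a,b'-b,E(a,b)\cdot\ell\res{\p_\nu})e^{-i\pi\nu a^2(b'-b)}$, read off from the displayed formula for $\pi_\ell(F)$. Since $\wh F^\z\in C_c^\iy(G/Z\times\z^*)$, the remark at the end of item~(1) of Section~\ref{tools} gives a function $\va\in C_c(G/Z)$ dominating $\wh F^\z$, so the kernel $F_\nu$ is dominated by a compactly supported continuous function of the difference variables. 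Thus the kernel is $L^2$ in $(a',b',a,b)$, which shows $\pi_\nu(F)$ is Hilbert--Schmidt, hence compact, establishing compactness for $F\in L^1_c$; compactness for general $a$ follows because $\K(L^2(\R^2))$ is norm closed and $\pi_\nu$ is a $*$-homomorphism of norm $\le 1$.

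For norm continuity I would use the Young-inequality estimate of item~(2) of Section~\ref{tools}: the difference $\pi_{\nu}(F)-\pi_{\nu'}(F)$ is a kernel operator whose kernel is again dominated by a single compactly supported $\va$ (uniformly for $\nu,\nu'$ in a compact set), and whose kernel depends continuously on the parameter $\nu$ since $E(a,b)\cdot\ell\res{\p_\nu}$ and the phase $e^{-i\pi\nu a^2(b'-b)}$ vary continuously. Dominated convergence then forces the $L^1$-norm of the dominating difference to zero as $\nu'\to\nu$, and by the Young bound the operator norm $\noop{\pi_\nu(F)-\pi_{\nu'}(F)}$ goes to zero; a standard $\ve/3$ argument upgrades this from $F\in L^1_c$ to all $a\in C^*(G_{5,3})$. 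For the decay at infinity I would simply invoke Proposition~\ref{infycon}: by Theorem~\ref{topodualG53} the orbits $\O_\nu$ go to infinity in $\g_{5,3}^*$ as $\val\nu\to\iy$, hence the representations $\pi_\nu$ go to infinity in $\wh{G_{5,3}}$, and the Riemann--Lebesgue lemma yields $\lim\noop{\pi_\nu(a)}=0$.

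The main obstacle is bookkeeping rather than conceptual: one must check that, after the Malcev identification of $L^2(G_{5,3}/P_\nu,\chi_\nu)$ with $L^2(\R^2)$, the kernel $F_\nu$ genuinely has the form \emph{dominated by a compactly supported function of the difference of the space variables}, so that the estimates of Section~\ref{tools} apply verbatim. The Gaussian-type phase factor $e^{-i\pi\nu a^2(b'-b)}$ has modulus one and therefore does not affect the domination or the $L^1$-estimates, but one must verify that it varies norm-continuously in $\nu$ and does not spoil compact support in the difference variables; this is where a little care is needed. For the layer $\GA_1^{5,3}$ the argument is formally identical but easier, since the representations act on $L^2(\R)$ and the kernel $\wh F^{P_{\rh,\mu}}(a'-a,a\cdot p_{\rh,\mu})$ already has the required difference form with no additional phase, so compactness, continuity in $(\rh,\mu)$, and decay as $\rh$ or $\mu\to\iy$ all follow by the same three ingredients.
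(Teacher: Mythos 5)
The paper gives no proof of this proposition (like its $G_{5,2}$ counterpart, it is treated as an immediate consequence of the explicit kernel formula), and your overall architecture is exactly the intended one: reduce to a dense subspace of functions whose partial Fourier transform is smooth with compact support, read off the kernel of $\pi_\nu(F)$, resp.\ $\pi_{\rh,\mu}(F)$, control differences of kernels by a compactly supported function of the difference variables, apply the Young-inequality estimate of Section \ref{tools} for norm continuity, and invoke the Riemann--Lebesgue lemma (Proposition \ref{infycon}) for the decay at infinity, after noting that $\O_\nu$ (resp.\ $\O_{\rh,\mu}$) leaves every compact subset of $\g_{5,3}^*$ as $\val\nu\to\iy$ (resp.\ as $\rh$ or $\mu\to\iy$). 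The continuity and decay arguments are sound.

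There is, however, one step that does not follow as written. From ``the kernel $F_\nu$ is dominated by a compactly supported continuous function $\va(a'-a,b'-b)$ of the difference variables'' you conclude ``thus the kernel is $L^2$ in $(a',b',a,b)$, which shows $\pi_\nu(F)$ is Hilbert--Schmidt.'' Domination by $\va(x-y)$ with $\va\in C_c$ only yields boundedness of the kernel operator, with norm at most $\no{\va}_1$; the function $(x,y)\mapsto \va(x-y)$ is not square integrable on $\R^2\ti\R^2$, so no Hilbert--Schmidt conclusion is available from this alone. What actually gives compact support of the kernel in all four variables, and hence the $L^2$ property, is the compact support of $\wh F^{P_\nu}$ in its $\p_\nu^*$-argument combined with the properness of the orbit map: for $\ell=\nu V^*$ with $\nu\ne 0$ one computes $E(a,b)\cdot\ell\res{\p_\nu}=(\nu b,\nu a,\nu)$ in the coordinates $(C^*,U^*,V^*)$, so confining this to a fixed compact subset of $\p_\nu^*$ confines $(a,b)$ to a compact set; similarly $a\cdot p_{\rh,\mu}=(\mu a,\rh,\mu,0)$ with $\mu\ne 0$ confines $a$. (For this to apply you should take $L^1_c$ relative to the normal subgroup $P_\nu$, resp.\ $P_{\rh,\mu}$, rather than relative to the centre $\z=\R V$, since it is $\wh F^{P_\nu}$ that must have compact support in the fibre variable.) With this correction the Hilbert--Schmidt claim, hence compactness on the dense subspace and then on all of $C^*(G_{5,3})$ by norm closedness of $\K(\H)$, goes through; alternatively, compactness for arbitrary $a$ is immediate from the fact that nilpotent Lie groups are CCR, which the paper assumes throughout.
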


\subsection{The changing of layers condition.}
\subsubsection*{$\bullet$ Passing from $\GA_2^{5,3}$ to $\GA_1^{5,3}\cup\GA_0^{5,3}$.}
Let $\ol\O=(\O_{\ell_k})_k\subset\GA_2^{5,3}$ be a properly converging sequence where $\ell_k=(0,0,0,0,\nu_k),\ k\in\N$ such that 
$\lim_k\nu_k=0.$ Let $p_k:=(\ell_k)\res{\p}.$ By Theorem \ref{topodualG53} the restriction of the limit set $L(\ol\O)$ to $\p$ 
is the closed set $L=L(\O)\res\p=\{(\rh,\mu,0),\ \rh\in\R, \mu\in\R\}.$
\begin{definition}
 For $k\in\N$ let:
\begin{eqnarray}\label{defset}
\nn\ve_k&:=&\val{\nu_k}^{\frac{1}{2}},\\
\nn I_{i,j}^k&:=&\left\{(c,u,\nu_k)\in p^*;
i\ve_k^{\frac 1 4}\leq{c}< i\ve_k^{\frac 1 4}+\ve_k^{\frac 1 2}
\text{ and }j\ve_k\leq u<j\ve_k+\ve_k\right\},\\
\nn U_{i,j}^k&:=&\left\{(x,y)\in\R^2;(xA+yB)\cdot p_k\in I_{i,j}^k\right\},\ j\in\Z^*.
\end{eqnarray}
Finally:
\begin{eqnarray*}
 U^k&:=&\underset{i,j\in\Z^*}{\bigcup} U_{i,j}^k.
\end{eqnarray*}
Let also for $k\in\N,\ i,j\in\Z:$
\begin{eqnarray}\label{defseqG56}
\nn& & x_{j}^k :=\frac{j\ve_k}{\nu_k},\ y_{i}^k:=\frac{i\ve_k^{\frac 1 4}}{\nu_k},
\ g_{i,j}^k=x_{j}^kA+y_{i,j}^kB.
\end{eqnarray}
Let for $i,j\in\Z^*,\ k\in\N^*:$
\begin{eqnarray*}
& &p_{i,j}^k:=(i\ve_k^{\frac{1}{4}},j\ve_k,0).
\end{eqnarray*}
\end{definition}
An easy computation gives:
\begin{eqnarray}\label{calcg.pG56}
 \nn& & g_{i,j}^k.p_k=(i\ve_k^{\frac{1}{4}},j\ve_k,\nu_k)=p_{i,j}^k+(0,0,\nu_k).
\end{eqnarray}
\begin{proposition}\label{suppcompG53}$ $
  Let $K$ be a compact subset, for $k$ large enough we have that 
  $$K U_{i,j}^k\subset \underset{i',j'=-1}{\overset{1}{\bigcup}}U_{i'+i,j'+j}^k=:V_{i,j}^k.$$
\end{proposition}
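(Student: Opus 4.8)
The plan is to transport the statement into the coordinates of the co-adjoint orbit, where left multiplication by the fixed compact set $K$ becomes a shift whose size is of order $|\nu_k|$ and hence tends to $0$, whereas the boxes $I_{i,j}^k$ keep a fixed combinatorial geometry. The whole point is then that for $k$ large this vanishing shift cannot move a point of one box past the block of its immediate neighbours.

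First I would analyse how an element $g\in K$ acts on $S\cong G_{5,3}/P\cong\R^2$. Since none of the brackets of $\g_{5,3}$ produces $A$ or $B$, the first two coordinates of the Campbell--Baker--Hausdorff product are additive: for $g=(a,b,c,u,v)$ the product of $g$ with any group element has $A$-component and $B$-component equal to the sums of the respective components. Moreover the polarization $\p=\text{span}\{C,U,V\}$ is abelian and the corresponding subgroup $P$ consists of elements with vanishing $A$- and $B$-components. Using $G_{5,3}=S\cdot P$, I may write $g\,E(x,y)=E(x',y')\,p'$ with $E(x',y')\in S$ and $p'\in P$; comparing the first two components gives $x'=x+a$ and $y'=y+b$. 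Thus $g$ acts on $S$ simply as the translation $(x,y)\mapsto(x+a,y+b)$, and since $K$ is compact there is an $M>0$ with $|a|,|b|\le M$ for every $(a,b,c,u,v)\in K$.

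Next I would pass to $\p^*$. By \eqref{defAd*G53} applied to $\ell_k=\nu_k V^*$ one has $E(x,y)\cdot p_k=(\nu_k y,\nu_k x,\nu_k)$, so
\[
E(x',y')\cdot p_k=E(x,y)\cdot p_k+(\nu_k b,\nu_k a,0),
\]
a displacement of norm at most $\sqrt2\,|\nu_k|\,M=\sqrt2\,\ve_k^{2}M$. It then remains to compare this with the geometry of the $I_{i,j}^k$: in the $c$-direction the defining intervals have length $\ve_k^{1/2}$ and are $\ve_k^{1/4}$ apart, while in the $u$-direction they have length $\ve_k$. As $\ve_k\to0$ we have $\ve_k^{2}M\ll\ve_k,\ \ve_k^{1/2},\ \ve_k^{1/4}$, so for $k$ large and for every $(x,y)\in U_{i,j}^k$ (i.e. $E(x,y)\cdot p_k\in I_{i,j}^k$) the shifted point $E(x',y')\cdot p_k$ lies in $\bigcup_{i',j'=-1}^{1}I_{i+i',j+j'}^k$, that is $(x',y')\in V_{i,j}^k$. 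Letting $g$ range over $K$ and $(x,y)$ over $U_{i,j}^k$ yields $KU_{i,j}^k\subset V_{i,j}^k$.

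The main obstacle is making the threshold ``$k$ large enough'' independent of the indices $i,j$ and controlling the two competing scales $\ve_k^{1/2}$ and $\ve_k^{1/4}$ in the $c$-direction. This is exactly what the scale-invariant comparison $\ve_k^{2}M\ll\ve_k^{1/4}$ secures: the displacement is dominated by the spacing of the boxes uniformly in $(i,j)$, so a point can leave its box only across a common boundary into an immediately adjacent one and can never reach a box outside the $3\times 3$ block. The honest care is to check that the displacement $\sqrt2\,\ve_k^2M$ stays below \emph{all} of the relevant lengths and gaps simultaneously, which is immediate once one records that each quotient $\ve_k^2/\ve_k$, $\ve_k^2/\ve_k^{1/2}$, $\ve_k^2/\ve_k^{1/4}$ is a positive power of $\ve_k$ and therefore tends to $0$.
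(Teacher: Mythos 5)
Your argument is essentially the paper's own proof: both reduce the action of $K$ on $G_{5,3}/P\cong\R^2$ to a translation by $(a,b)$ with $|a|,|b|\le M$ (additivity of the first two Campbell--Baker--Hausdorff coordinates), transport it through the coadjoint action to a displacement $(\nu_k b,\nu_k a,0)$ of size $O(|\nu_k|)=O(\ve_k^{2})$ in $\p^*$, and compare this with the side lengths of the boxes $I_{i,j}^k$. The one caveat --- shared with the paper, whose proof stops at exactly the same coordinate estimates --- is that since $\ve_k^{1/2}<\ve_k^{1/4}$ the intervals $[i\ve_k^{1/4},i\ve_k^{1/4}+\ve_k^{1/2})$ do not tile the $c$-axis, so the step ``the shifted point stays within the $3\times3$ block of neighbouring boxes'' really requires the two exponents in the definition of $I_{i,j}^k$ to agree (a typo in the source, also needed later for $\sum_{i,j}M_{U_{i,j}^k}=\Id$); with that reading both proofs are complete.
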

\begin{proof}$ $
  We can suppose that $K P$ is contained in $[-M,M]^2P$ for some $M>0.$
For $r=(u,v)\in K \subset  G_{5,3}/P$ and $s=(x,y)\in U^k$ we have that 
\begin{eqnarray*}
 (rs).p_k&=&(\nu_kv+\nu_ky,\nu_kx+\nu_k u,\nu_k)
\end{eqnarray*}
and 
\begin{eqnarray*}
 \nn& & (x,y)\in U_{i,j}^k\\
\nn &\Leftrightarrow&(x,y)\cdot p_k\in I^{k}_{i,j}\\
\nn &\Rightarrow&
\left\{
\begin{array}{c}
j\ve_k\leq \nu_kx<j\ve_k+\ve_k,\\
i\ve_k^{\frac 1 4} \leq\nu_ky< i\ve_k^{\frac 1 4}+\ve_k^{\frac 1 2},
\end{array}
\right.\\
\nn &\Rightarrow&
\left\{
\begin{array}{c}
(j-1)\ve_k\leq \nu_kx+\nu_ku<(j+1)\ve_k+\ve_k,\\
(i-1)\ve_k^{\frac 1 2} \leq\nu_ky+\nu_kv< (i+1)\ve_k^{\frac 1 4}+\ve_k^{\frac 1 2}.
\end{array}
\right.
 \end{eqnarray*}
It follows that $\K U_{i,j}^k\subset \underset{i',j'=-1}{\overset{1}{\bigcup}}U_{i'+i,j'+j}^k.$
\end{proof}
\begin{definition}
 For $k\in\N^*$
   Let $$R^k=\left[-\frac{\ve_k}{\val{\nu_k}},\frac{\ve_k}{\val{\nu_k}}\right]\times
  \left[-\frac{\ve_k^{\frac{1}{2}}}{\val{\nu_k}},\frac{\ve_k^{\frac{1}{2}}}{\val{\nu_k}}\right].$$
\end{definition}
\begin{lemma}\label{lemma1G53}
 For $k\in\N^*$ large enough,  for any $i,j\in\Z^*$ we have  the set $U_{i,j}^k$ is contained in $ R^k+g_{i,j}^k$. 
\end{lemma}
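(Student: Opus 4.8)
The plan is to prove the containment by a direct computation, the point being that the restriction to $\p$ of the coadjoint action of $\exp(xA)\exp(yB)$ on $p_k$ is, in the relevant coordinates, a genuinely \emph{linear} (indeed, after a coordinate swap, diagonal) map of $(x,y)$. Once this is recorded, membership in $U_{i,j}^k$ translates into two one-variable interval conditions, and dividing by $\nu_k$ produces exactly the bounds defining $R^k+g_{i,j}^k$.

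First I would record the explicit restricted action. Using the formula (\ref{defAd*G53}) for $\Ad^*$ together with $\ell_k=(0,0,0,0,\nu_k)$, so that $p_k=\ell_k\res\p=(0,0,\nu_k)$ in the coordinates $(c,u,\nu)$ on $\p^*$, one computes
\begin{eqnarray*}
(xA+yB)\cdot p_k=(\nu_k y,\nu_k x,\nu_k),\quad (x,y)\in\R^2.
\end{eqnarray*}
This is the content behind (\ref{calcg.pG56}): the $A^*$-component acquires a cross term $-\nu_k xy/2$, but that component is annihilated by restriction to $\p$, leaving $c=\nu_k y$ and $u=\nu_k x$, each depending linearly on a single variable.

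Next I would translate membership into inequalities. By the definitions (\ref{defset}), $(x,y)\in U_{i,j}^k$ means $(xA+yB)\cdot p_k\in I_{i,j}^k$, which by the previous step reads
\begin{eqnarray*}
j\ve_k\le\nu_k x<j\ve_k+\ve_k,\qquad i\ve_k^{\frac14}\le\nu_k y<i\ve_k^{\frac14}+\ve_k^{\frac12}.
\end{eqnarray*}
Dividing by $\nu_k$ and recalling $x_j^k=\frac{j\ve_k}{\nu_k}$, $y_i^k=\frac{i\ve_k^{1/4}}{\nu_k}$ from (\ref{defseqG56}), one obtains
\begin{eqnarray*}
\val{x-x_j^k}<\frac{\ve_k}{\val{\nu_k}},\qquad \val{y-y_i^k}<\frac{\ve_k^{\frac12}}{\val{\nu_k}}.
\end{eqnarray*}
Since $g_{i,j}^k=x_j^k A+y_i^k B$ corresponds to the point $(x_j^k,y_i^k)\in\R^2$, these two estimates say precisely that $(x,y)-g_{i,j}^k\in R^k$, i.e. $(x,y)\in R^k+g_{i,j}^k$, which is the assertion.

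There is no serious obstacle here; the lemma is essentially the observation that $U_{i,j}^k$ is the preimage of the box $I_{i,j}^k$ under the invertible (diagonal-up-to-swap) map $(x,y)\mapsto(\nu_k y,\nu_k x)$, and that $R^k$ is a rectangle comfortably containing that preimage once it is recentred at $g_{i,j}^k$. The only delicate point is the sign of $\nu_k$: division reverses the inequalities when $\nu_k<0$, but in either case the two-sided bounds hold with $\val{\nu_k}$ in the denominator, which is exactly why $R^k$ was defined using $\val{\nu_k}$. In fact $R^k$ is twice as wide as strictly needed in each direction, so the containment holds for every $k$ with $\nu_k\ne0$ (and in particular for all $k$ in our sequence of generic orbits); the hypothesis $\lim_k\nu_k=0$ is not used for this estimate and will only matter later, when these boxes are assembled into the comparison maps $\si_{\ol\O,k}$.
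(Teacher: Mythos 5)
Your proof is correct and follows essentially the same route as the paper's: both compute the restricted coadjoint action $(xA+yB)\cdot p_k=(\nu_k y,\nu_k x,\nu_k)$ on $\p^*$, translate membership in $U_{i,j}^k$ into the two one-variable interval conditions $j\ve_k\leq\nu_k x<j\ve_k+\ve_k$ and $i\ve_k^{1/4}\leq\nu_k y<i\ve_k^{1/4}+\ve_k^{1/2}$, and divide by $\nu_k$ to conclude $\val{x-x_j^k}\leq\ve_k/\val{\nu_k}$ and $\val{y-y_i^k}\leq\ve_k^{1/2}/\val{\nu_k}$, i.e.\ $(x,y)\in R^k+g_{i,j}^k$. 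Your additional observations (the sign of $\nu_k$ being absorbed by the use of $\val{\nu_k}$ in $R^k$, and the containment holding for every $k$ with $\nu_k\ne0$) are accurate.
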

\begin{proof}
 Let $s=(x,y)\in U_{i,j}^k.$ Then:
 \begin{eqnarray*}
  & &(x,y)\cdot p_k\in I_{i,j}^k\\
  &\Longleftrightarrow& \begin{cases}
j\ve_k\leq \nu_kx<j\ve_k+\ve_k\Rightarrow\val{x-x_j^k}\leq\frac{\ve_k}{\val{\nu_k}}\Rightarrow 
x\in\left[-\frac{\ve_k}{\val{\nu_k}},\frac{\ve_k}{\val{\nu_k}}\right]+x_j^k,\\
i\ve_k^{\frac 1 4} \leq\nu_ky< i\ve_k^{\frac 1 4}+\ve_k^{\frac 1 2}
\Rightarrow\val{y-y_{i}^k}\leq \frac{\ve_k^{\frac 1 2}}{\val{\nu_k}}\Rightarrow 
y\in\left[-\frac{\ve_k^{\frac{1}{2}}}{\val{\nu_k}},\frac{\ve_k^{\frac{1}{2}}}{\val{\nu_k}}\right]+y_{i}^k.
                         \end{cases}\\
\nn&\Longrightarrow& s\in R^k+g_{i,j}^k,
\end{eqnarray*}

\end{proof}
\begin{lemma}\label{lemma2G53}
  For $k\in\N^*$ large enough, for $i,j\in\Z^*$ and any 
 $(x,y)\in U_{i,j}^k$ we have that 
 $$\no{(xA+yB)\cdot p_k-((xA+yB)\cdot (g_{i,j}^k)\inv)\cdot p_{i,j}^k}\leq3\ve_k^{\frac 1 2}.$$
\end{lemma}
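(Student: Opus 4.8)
The plan is to carry out the whole estimate inside the three coordinates on $\p^*$ coming from the basis $\{C,U,V\}$, writing a functional as $(\rh,\mu,\nu)$ (its values on $C,U,V$). First I would read off from the coadjoint formula (\ref{defAd*G53}) the action induced on $\p^*$. The $C$-, $U$- and $V$-components of $\Ad^*(a,b,c,u,v)(\al,\be,\rh,\mu,\nu)$ are $\rh+\nu b$, $\mu+a\nu$ and $\nu$, so they depend only on $(\rh,\mu,\nu)$ and on the group element; hence the action descends to $\p^*$ and is given by
\begin{eqnarray*}
(a,b,c,u,v)\cdot(\rh,\mu,\nu)=(\rh+\nu b,\ \mu+a\nu,\ \nu).
\end{eqnarray*}
Only the $A$- and $B$-coordinates of the group element intervene, and they add under multiplication, so this is a genuine left action.

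With this formula the two terms in the statement become explicit. Since $\exp(xA+yB)$ has coordinates $(x,y,0,0,0)$ and $p_k=(0,0,\nu_k)$, I obtain $(xA+yB)\cdot p_k=(\nu_k y,\nu_k x,\nu_k)$. For the second term the decisive remark is that the $V$-component of $p_{i,j}^k=(i\ve_k^{\frac14},j\ve_k,0)$ vanishes: because the displayed action shifts the first two coordinates only by multiples of $\nu$, any element of $G_{5,3}$ fixes every functional with zero $V$-component. Thus I need not even compute $(xA+yB)\cdot(g_{i,j}^k)\inv$ explicitly; whatever group element it is, one has $((xA+yB)\cdot(g_{i,j}^k)\inv)\cdot p_{i,j}^k=p_{i,j}^k=(i\ve_k^{\frac14},j\ve_k,0)$. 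Consequently the vector to be estimated equals
\begin{eqnarray*}
(xA+yB)\cdot p_k-p_{i,j}^k=\left(\nu_k y-i\ve_k^{\frac14},\ \nu_k x-j\ve_k,\ \nu_k\right).
\end{eqnarray*}

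It then remains to bound this vector componentwise using $(x,y)\in U_{i,j}^k$. By definition $(xA+yB)\cdot p_k\in I_{i,j}^k$, i.e. $i\ve_k^{\frac14}\leq\nu_k y<i\ve_k^{\frac14}+\ve_k^{\frac12}$ and $j\ve_k\leq\nu_k x<j\ve_k+\ve_k$, so $\val{\nu_k y-i\ve_k^{\frac14}}<\ve_k^{\frac12}$ and $\val{\nu_k x-j\ve_k}<\ve_k$. The third component satisfies $\val{\nu_k}=\ve_k^{2}$ since $\ve_k=\val{\nu_k}^{\frac12}$. Adding the three bounds gives $\ve_k^{\frac12}+\ve_k+\ve_k^{2}$, and for $k$ large enough $\ve_k\leq1$, whence $\ve_k\leq\ve_k^{\frac12}$ and $\ve_k^{2}\leq\ve_k^{\frac12}$, so the total is at most $3\ve_k^{\frac12}$, as claimed. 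The computation is routine; the only point that needs care is identifying the descended action on $\p^*$ correctly and recognizing that $p_{i,j}^k$, having zero $V$-component, is a fixed point of the whole group, which is what collapses the second term to $p_{i,j}^k$.
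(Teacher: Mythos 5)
Your proof is correct and follows essentially the same route as the paper: both reduce to the induced action on $\p^*$, use that $p_{i,j}^k$ has vanishing $V$-component (so the second term collapses to $p_{i,j}^k$ itself), and bound the difference $(\nu_k y-i\ve_k^{\frac14},\nu_k x-j\ve_k,\nu_k)$ componentwise by $\ve_k^{\frac12}+\ve_k+\val{\nu_k}\leq 3\ve_k^{\frac12}$. The only cosmetic difference is that you invoke the defining inequalities of $I_{i,j}^k$ directly, whereas the paper first writes $(x,y)=(x'+x_j^k,y'+y_i^k)$ and bounds $\val{\nu_k x'},\val{\nu_k y'}$.
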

\begin{proof}
 For $(x,y)\in U_{i,j}^k$ we have that $ (x,y)=(x'+x^k_{j},y'+y^k_{i,j}) $ where $ \val{\nu_kx'}\leq \ve_k $ 
 and $ \val{\nu_k y'}\leq  \ve_k^{{\frac{1}{2}}}$. Therefore
\begin{eqnarray}\label{}
\nn &&
\no{((x'+x^k_{j})A+(y'+y^k_{i,j})B)p_k-(x'A+y'B)\cdot p^{k}_{i,j}}\\
\nn &=&
\no{(\nu_ky'+\nu_ky_{i}^k,\nu_kx_j^k+\nu_kx',\nu_k)
-(i\ve_k^{\frac 1 4},j\ve_k,0)}\\ 
\nn &=&
\no{(\nu_ky'+i\ve_k^{\frac 1 4},j\ve_k+\nu_kx',\nu_k)
-(i\ve_k^{\frac 1 4},j\ve_k,0)}\\ 
\nn &=&
\no{(\nu_ky',\nu_kx',\nu_k)}\\
\nn&=& \val{\nu_ky'}+\val{\nu_kx'}+\val{\nu_k}\\
\nn&\leq&\ve_k^{\frac 1 2}+\ve_k+\val{\nu_k}\leq3\ve_k^{\frac 1 2}.
\end{eqnarray}
\end{proof}
\begin{definition}\label{piispipG53}
\rm   Let for $ \be,\ \rh,\ \mu\in\R $,  let 
$$ \ell_{\be,\rh,\mu}=\be B^*+\rh C^*+\mu U^*,\ \ell_{\be,\mu}=\be B^*+\mu U^*,\ \text{and }\ell_{\rh,\mu}=\rh C^*+\mu U^*\in\g_{5,3}^*. $$
 The sub-algebra 
$ \p:=\p_{{\be,\mu}}=\textrm{ span}\{B,\p\} $ is a polarization  at $\ell_{\be,\mu} $ and at $\ell_{\be,\rh, \mu} $, which
gives us  the equivalent 
representations  $ \pi_{\be,\mu}=\ind {P}{G_{5,3}}
\ch_{\ell_{\be,\mu}}\in\wh G_{5,3} $ and $ \pi_{\be,\rh,\mu}=\ind
{P}{G_{5,3}}
{\ch_{\ell_{\be,\rh,\mu}}} $. Let $ u_{\be,\rh,\mu} $ be the unitary operator which gives the equivalence between both representations. 
We take the direct integral representation 
\begin{eqnarray}\label{integralrepG53}
 \ta_{\rh,\mu} :=\left(\int_\R^\oplus
\pi_{\ell_{\be,\rh,\mu}}d\be,\int_\R^\oplus
L^2(G_{5,3}/P,\ch_{\ell_{\be,\rh,\mu}})d\be\right). 
\end{eqnarray}
This representation $ \ta_{\rh,\mu}  $ is in fact equivalent to the representation 
 $ \si_{\rh,\mu}:=\ind {P} {G_{5,3}}\ch_{\ell_{\rh,\mu}} $ and a unitary  intertwining $ U_{\rh,\mu} $ operator is given by:
\begin{eqnarray}\label{}
 \nn U_{\rh,\mu}:& L^{2}(G_{5,3}/P,\ch_{\rh,\mu})\mapsto\int_\R^\oplus
L^{2}(G_{5,3}/P,\ch_{\ell_{\rh,\mu}})d\be,\\
\nn  
U_{\rh,\mu}(\xi)(\be)(g):&=\int_\R \xi(g\exp{( s B)})e^{-2i\pi s\be}ds,g\in G_{5,3},\be\in\R.
\end{eqnarray} 
 Hence for every $ a\in C^*(G_{5,3}) $ we have that
 \begin{eqnarray}\label{noesigG53}
\noop{\si_{\rh,\mu}(a)}&=&\sup_{\be\in\R}\noop{\pi_{\ell_{\be,\rh,\mu}}(a)} . 
\end{eqnarray}
\end{definition}
 \begin{definition}\label{vkdefG53}$  $\rm 
\begin{itemize}\label{defsiptwG53}
\item   Let $C_{\ol \O}= CB(L(\ol\O),\B(L^{2}(\R^{2}))) $ be the $ C^* $-algebra of all
continuous, uniformly bounded mappings $ \ph:L(\ol\O)\mapsto \B(L^{2}(\R^2)) $ from
the locally compact space $L(\ol\O)$ into the algebra of bounded linear
operators $ \B(L^2(\R^2)) $ on the Hilbert space $ L^2(\R^2) $. By the Theorem \ref{topodualG53} 
we observe  that for any $ a\in C^*(G_{5,3}) $, the operator field $ \hat
a\res{L(\ol\O)} $ is contained in $ C_{\ol\O} $.
 Furthermore, for   $ \ell=\rh C^*+\mu U^*\in \g_{5,3}^*$, 
we obtain a representation $ \tilde\si_{\rh,\mu}=\tilde\si_\ell $ on the Hilbert space $ L^2(\R^2) $ of the algebra $ C_{\ol \O} $ 
defined by: 
 \begin{eqnarray}\label{sielldefG53}
\tilde\si_\ell(\ph)\xi:=U_\ell\inv\left(\int_\R^\oplus u_{\be,\rh,\mu}^*\circ \ph(\be,\mu)\circ 
u_{\be,\rh,\mu}\big(U_\ell(\xi)(\be)\big)d\be\right), \ph\in C_{\ol \O}.
\end{eqnarray}
\item  Define for $ k\in \N$ and  $\ph\in C_{\ol\O}$ the linear
operator  $ \tilde \si_{k,\ol\O}(\ph)$ by
\begin{eqnarray}\label{deftildesikG53}
   \tilde\si_{k,\ol\O}(\ph):=\sum_{i\in \Z}\sum_{j\in \Z}M_{V^{k}_{i,j}}\circ
\tilde\si_{{(g^{k}_{i,j}})\inv\cdot p^{k}_{i,j}}(\ph)\circ M_{U^{k}_{i,j}}, 
\end{eqnarray}
where $ \tilde\si_\ell$ for $ \ell=(g^{k}_{i,j})\inv\cdot p^{k}_{i,j}, $ is an in Equation
(\ref{sielldefG53}). For $ a\in C^*(G_{5,3}) $ we have that 
$
 \si_{k,\ol\O}(a)=\tilde\si_{k,\ol\O}(\wh a\res {L(\ol O)}).
$ 
\end{itemize}
 \end{definition}
 \begin{proposition}\label{progencondG53}
 Let $a\in C^*(G_{5,3})$. Then: 
 \begin{eqnarray*}
  \underset{k\to\iy}{\lim}\noop{\pi_{\ell_k}(a)-\si_{k,\ol\O}(a)}=0.
 \end{eqnarray*}
\end{proposition}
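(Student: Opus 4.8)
The plan is to prove the estimate first for $F$ in the dense $*$-subalgebra $L^1_c=L^1_{c,\p}$ (with $\p=\text{span}\{C,U,V\}$ the fixed generic polarization), for which $\wh F^\p$ is smooth with compact support, and then to extend it to every $a\in C^*(G_{5,3})$ by density. This reduction is legitimate because both families are uniformly bounded in $k$: trivially $\noop{\pi_{\ell_k}(a)}\le\no a$, while $\noop{\si_{k,\ol\O}(a)}=\noop{\tilde\si_{k,\ol\O}(\wh a\res{L(\ol\O)})}\le C\no a$ with $C$ independent of $k$, by Proposition \ref{sumsofop} together with the uniformly bounded overlap of the families $(U^k_{i,j})$ and $(V^k_{i,j})$ furnished by Proposition \ref{suppcompG53}.

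For $F\in L^1_c$, I would start from the explicit kernel of $\pi_{\ell_k}(F)$ on $L^2(\R^2)$, namely $\wh F^{\p}(a'-a,b'-b,E(a,b)\cdot\ell_k\res{\p})$ times the phase $e^{-i\pi\nu_k a^2(b'-b)}$. A short computation from (\ref{defAd*G53}) shows that $E(a,b)\cdot\ell_k\res{\p}=(\nu_k b,\nu_k a,\nu_k)$ in the $(c,u,\nu)$-coordinates, so the frequency argument varies slowly as $\nu_k\to 0$. Since $(U^k_{i,j})_{i,j}$ partitions $\R^2$ we have $\sum_{i,j}M_{U^k_{i,j}}=\Id$, whence $\pi_{\ell_k}(F)=\sum_{i,j}\pi_{\ell_k}(F)\circ M_{U^k_{i,j}}$. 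Because $\wh F^\p$ has compact support in the $G/P$-variable, Proposition \ref{suppcompG53} forces $\pi_{\ell_k}(F)\circ M_{U^k_{i,j}}$ to land, for $k$ large, inside $V^k_{i,j}$, so inserting $M_{V^k_{i,j}}$ on the left changes the operator negligibly and we may compare block by block with $\tilde\si_{k,\ol\O}(\wh F)=\sum_{i,j}M_{V^k_{i,j}}\circ\tilde\si_{(g^k_{i,j})\inv\cdot p^k_{i,j}}(\wh F)\circ M_{U^k_{i,j}}$.

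The core is the per-block estimate. On $U^k_{i,j}$ the frequency $E(a,b)\cdot\ell_k\res{\p}$ stays within $3\ve_k^{1/2}$ of the frozen value determined by $g^k_{i,j}\cdot p_k=p^k_{i,j}+(0,0,\nu_k)$ (Lemma \ref{lemma2G53}), while $U^k_{i,j}\subset R^k+g^k_{i,j}$ (Lemma \ref{lemma1G53}). Freezing the argument of $\wh F^\p$ at $p^k_{i,j}$ and using the uniform continuity of $\wh F^\p$ (guaranteed by its compact support), the block $M_{V^k_{i,j}}\circ\pi_{\ell_k}(F)\circ M_{U^k_{i,j}}$ agrees, up to an operator-norm error $\le\omega_F(3\ve_k^{1/2})\,\no\va_1$, with the convolution kernel $\wh F^\p(a'-a,b'-b,p^k_{i,j})$ attached to the limit representative $(g^k_{i,j})\inv\cdot p^k_{i,j}$; here $\omega_F$ is a modulus of continuity of $\wh F^\p$, $\va$ is the dominating $C_c$-function from the Young inequality, and the translation by $g^k_{i,j}$ is exactly what turns the localized generic kernel into the concrete realization $\tilde\si_{(g^k_{i,j})\inv\cdot p^k_{i,j}}(\wh F)$ of Definitions \ref{piispipG53} and \ref{vkdefG53}. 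The residual phase $e^{-i\pi\nu_k a^2(b'-b)}$, uniformly close to $1$ on the support of $\wh F^\p$, contributes a further error that also vanishes with $\nu_k$.

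Finally I would sum the blocks. Since the overlaps of the $U^k_{i,j}$ and of the $V^k_{i,j}$ are uniformly bounded, Proposition \ref{sumsofop} bounds $\noop{\pi_{\ell_k}(F)-\si_{k,\ol\O}(F)}$ by a fixed multiple of the per-block error $\omega_F(3\ve_k^{1/2})\,\no\va_1$. As $\ve_k=\val{\nu_k}^{1/2}\to 0$ and $\omega_F(t)\to 0$ when $t\to 0$, the right-hand side vanishes, giving the claim on $L^1_c$ and hence, by density and the uniform bounds, on all of $C^*(G_{5,3})$. The step I expect to be the main obstacle is the exact identification of the frozen generic block with the limit-representation block $M_{V^k_{i,j}}\circ\tilde\si_{(g^k_{i,j})\inv\cdot p^k_{i,j}}(\wh F)\circ M_{U^k_{i,j}}$: one must carefully track the translation $g^k_{i,j}$ and the direct-integral intertwining data $U_{\rh,\mu}$ of Definition \ref{piispipG53}, so that the frozen kernel of $\pi_{\ell_k}$ matches the kernel of $\tilde\si_{(g^k_{i,j})\inv\cdot p^k_{i,j}}$ on the nose, with every discrepancy pushed into the uniformly small continuity term.
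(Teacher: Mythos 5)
Your proposal follows the paper's proof essentially verbatim: reduce to $F\in L^1_c$, use Proposition \ref{suppcompG53} to insert $M_{V^k_{i,j}}$, compare kernels block by block via Lemmas \ref{lemma1G53} and \ref{lemma2G53}, bound by Young's inequality, and conclude by density with the uniform bound from Proposition \ref{sumsofop}. The only cosmetic difference is that you invoke a modulus of continuity $\omega_F(3\ve_k^{1/2})$ where the paper uses the Lipschitz-type estimate $\val{\wh F^P(s,q)-\wh F^P(s,p)}\leq\va(s)\no{q-p}$, which yields the same conclusion.
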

\begin{proof}
 Let $\ve>0.$ Take first $F\in L_c^1(G_{5,3}).$ Let us choose a compact subset $K\subset\p^*$ and an $M>0$ such that the function 
 $\R^2\times\p^*\ni((x,y),p)\to\wh F^P(E(x,y),p)$ is supported in $[-M,M]^2\times K.$ By Proposition \ref{suppcompG53} 
 we have  for $k$ large enough:  
 \[\pi_{\ell_k}(F)\circ M_{U_{i,j}^k}= M_{V_{i,j}^k}\circ\pi_{\ell_k}(F)\circ M_{U_{i,j}^k},\ i,j\in\Z.\]
 The kernel function $F_k$ of the operator $\pi_{\ell_k}(F)\circ M_{U^{k}_{i,j}}-M_{V^{k}_{i,j}}\circ
\tilde\si_{{(g^{k}_{i,j}})\inv\cdot p^{k}_{i,j}}(F)\circ M_{U^{k}_{i,j}}$ 
 is given by 
 \begin{eqnarray*}
  F_k(s,t)  &=&1_{V_{i,j}^k}(s)1_{U_{i,j}^k}(t)\left(\wh F^P(st\inv,t\cdot p_k)-\wh F^P(st\inv,t(g_{i,j}^k)\inv\cdot p_{i,j}^k)\right)
 \end{eqnarray*}
Since the function $(s,p)\to \val{\wh F^P(s,p)}^2$ is in $C_c^\iy(G_{5,3}/P,\p^*)$ there exists a non-negative continuous function with 
compact support $\varphi:G_{5,3}/P\to\R_+$ such that for any $q,p\in\p*,\ s\in G_{5,3}/P:$
\[\val{\wh F^P(s,q)-\wh F^P(s,p)}\leq\va(s)\no{q-p}.\]
It follows then from Lemma \ref{lemma1G53} and Lemma \ref{lemma2G53} that for $k\in\N$ large enough, $i,j\in\Z,\ s\in G_{5,3}/P:$
\begin{eqnarray*}
\val{F_k(s,t)} &\leq&\left|\wh F^P(st\inv,t\cdot p_k)-\wh F^P(st\inv,t(g_{i,j}^k)\inv\cdot p_{i,j}^k)\right|\\
 &\leq&3\ve_k^{\frac 1 2}\va(st\inv).
\end{eqnarray*}
Using now Young's estimate, we see that for $k$ large enough and $i,j\in\Z:$
\begin{eqnarray*}
& &\noop{\pi_{\ell_k}(F)-\tilde\si_{k,\ol\O}(F)}
  \leq3\ve_k^{\frac 1 2}\no\va_1.
\end{eqnarray*}
\end{proof}
\subsubsection*{$\bullet$ Passing from $\GA_1^{5,3}$ to $\GA_0^{5,3}$.}
\begin{definition}\label{}
\rm Choose a Schwartz-function
$\eta$ in $\S(\R)$ with $L^2-$norm equal to $1$. For $(\al,\be)\in\R^2$ we
define the function $\eta_{\mu}(\al,\be)$ by
\begin{eqnarray}\label{etaG53}
  & &\eta_{\mu}(\al,\be)(s):= \val{\mu}^{\frac{1}{4}}
e^{2i\pi s\al}\eta( \val{\mu}^{\frac{1}{2}}(s+\frac{\be}{\mu})),\ s\in\R.
\end{eqnarray}

 \end{definition}
\begin{lemma}
 Let $\xi\in\S(\R)$. Then,
$$\xi=\frac{1}{\val{\mu}}\int_{\R^2}\langle\xi,\eta_\mu(\al,\be)\rangle\eta_\mu(\al,\be)d\al d\be.$$ 
\end{lemma}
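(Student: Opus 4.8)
The plan is to recognize this identity as the resolution of the identity attached to the coherent states $\eta_\mu(\al,\be)$, exactly as in the Heisenberg-group computation behind Lemma $2.8$ of \cite{Lud-Tur}, and to establish it by performing the two integrations in the order $d\al$ first, then $d\be$. Since $\xi$ and $\eta$ both lie in $\S(\R)$, every function occurring below is rapidly decreasing, so Fubini's theorem and Fourier inversion will apply without trouble; it therefore suffices to check that the right-hand side, evaluated at an arbitrary point $s\in\R$, equals $\xi(s)$.

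First I would unfold the inner product. From definition (\ref{etaG53}) one gets
\[
\langle\xi,\eta_\mu(\al,\be)\rangle=\val{\mu}^{\frac14}\int_\R \xi(t)\,e^{-2i\pi t\al}\,\ol{\eta}\big(\val{\mu}^{\frac12}(t+\tfrac{\be}{\mu})\big)\,dt,
\]
which, for each fixed $\be$, is the Fourier transform at $\al$ of the Schwartz function $h_\be(t):=\val{\mu}^{\frac14}\xi(t)\,\ol{\eta}\big(\val{\mu}^{\frac12}(t+\frac{\be}{\mu})\big)$. Substituting this into the claimed formula and integrating in $\al$ first, the only $\al$-dependence on the side of $\eta_\mu(\al,\be)(s)$ is the factor $e^{2i\pi s\al}$, so the inner $\al$-integral produces $\int_\R\langle\xi,\eta_\mu(\al,\be)\rangle\,e^{2i\pi s\al}\,d\al$, which by Fourier inversion is exactly $h_\be(s)$. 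Hence the right-hand side collapses to
\[
\frac{1}{\val{\mu}}\int_\R h_\be(s)\,\val{\mu}^{\frac14}\,\eta\big(\val{\mu}^{\frac12}(s+\tfrac{\be}{\mu})\big)\,d\be
=\frac{\val{\mu}^{\frac12}}{\val{\mu}}\,\xi(s)\int_\R\Big|\eta\big(\val{\mu}^{\frac12}(s+\tfrac{\be}{\mu})\big)\Big|^2\,d\be.
\]

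It then remains to evaluate the $\be$-integral. Carrying out the change of variable $u=\val{\mu}^{\frac12}(s+\frac{\be}{\mu})$, whose Jacobian has absolute value $\val{d\be/du}=\val{\mu}^{\frac12}$, turns it into $\val{\mu}^{\frac12}\int_\R\val{\eta(u)}^2\,du=\val{\mu}^{\frac12}$, using $\no{\eta}_2=1$. Inserting this back gives $\frac{\val{\mu}^{\frac12}}{\val{\mu}}\,\val{\mu}^{\frac12}\,\xi(s)=\xi(s)$, which is the asserted identity.

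The only genuinely delicate point is the legitimacy of integrating in $\al$ before $\be$ and of applying Fourier inversion pointwise. Both are justified by the rapid decay of $\xi$ and $\eta$: the windowed transform $(\al,\be)\mapsto\langle\xi,\eta_\mu(\al,\be)\rangle$ is itself rapidly decreasing on $\R^2$, while $\val{\eta_\mu(\al,\be)(s)}$ is bounded in $\al$ and decays rapidly in $\be$, so the integrand $(\al,\be)\mapsto\langle\xi,\eta_\mu(\al,\be)\rangle\,\eta_\mu(\al,\be)(s)$ is absolutely integrable on $\R^2$ for every fixed $s$. Thus Fubini's theorem applies and the formal manipulation above is rigorous, which is precisely the argument used for the analogous statement in \cite{Lud-Tur}.
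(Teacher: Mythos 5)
Your proof is correct and is exactly the standard coherent-state resolution-of-identity computation (Fourier inversion in $\al$, then the change of variable $u=\val{\mu}^{1/2}(s+\be/\mu)$ together with $\no{\eta}_2=1$ in $\be$), which is the argument of Lemma $2.8$ of \cite{Lud-Tur} that the paper invokes without reproducing; the powers of $\val{\mu}$ and the Fubini justification all check out. No issues.
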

\begin{definition}
 $1.$\ For all $(\al,\be)\in\R^2$ and $k\in\N$, let $P_{{\mu}(\al,\be)}$ be the
orthogonal projection onto  the one dimensional subspace
$\C\eta_{\mu}(\al,\be)$.\\
$2.$ Define for $h\in C_0(\R^2)$  the linear operator
\begin{eqnarray}\label{nuPG53}
 \sigma_{\rh,\mu}(h):=\frac{1}{ \val{\mu}}\int_{\R^2}
h(\al A^*+\be B^*+ \rh C^*)P_{\mu(\al,\be)}d\al d\be.
\end{eqnarray}
\end{definition}
\begin{proposition}(see Proposition 2.11 in \cite{Lud-Tur})
 \begin{enumerate}
  \item For every ${(\rh,\mu)}\in\GA^{5,3}_1$ and $h\in\S(\R^2)$ the integral $(\ref{nuPG53})$ converges in operator norm.
\item   $ \si_{\rh,\mu}(h) $ is compact and
$\no{\si_{\rh,\mu}(h)}_{op}\leq\no{h}_{\iy}$.
\item  The mapping $ \si_{\rh,\mu}: C_0(\R^{2})\to \F_1 $
is involutive, i.e. $ \si_{\rh,\mu}(h^*)=\si_{\rh,\mu}(h)^* , h\in C^*(\R^{2})$, where by
$\si_{\rh,\mu}$ we denote also  the extension of $\si_{\rh,\mu}$ to $C_0(\R^{2})$.
 \end{enumerate}

\end{proposition}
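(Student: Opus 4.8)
The plan is to follow the argument of Proposition 2.11 in \cite{Lud-Tur}, exploiting that $\pa{\eta_\mu(\al,\be)}_{(\al,\be)\in\R^2}$ is a normalized tight frame of $L^2(\R)$ whose resolution of the identity is furnished by the preceding Lemma. First I would record that each $\eta_\mu(\al,\be)$ is a unit vector: substituting $u=\val{\mu}^{\frac12}(s+\frac{\be}{\mu})$ in $\no{\eta_\mu(\al,\be)}_2^2$ cancels the factor $\val{\mu}^{\frac12}$ carried by (\ref{etaG53}) and leaves $\no\eta_2^2=1$. Hence for each $(\al,\be)$ the integrand in (\ref{nuPG53}) is the rank-one operator $h(\al A^*+\be B^*+\rh C^*)P_{\mu(\al,\be)}$, of operator norm $\frac{1}{\val\mu}\val{h(\al A^*+\be B^*+\rh C^*)}$. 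As $h\in\S(\R^2)$ this is an integrable function of $(\al,\be)$, so the integral (\ref{nuPG53}) converges absolutely in operator norm as a Bochner integral, which is assertion (1).

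For (2), compactness is immediate since $\sigma_{\rh,\mu}(h)$ is an operator-norm limit of finite sums of the rank-one integrands, hence a norm limit of compact operators. The norm estimate is the heart of the matter. I would compute, for $\xi,\zeta\in L^2(\R)$, the sesquilinear form
\begin{eqnarray*}
\sp{\sigma_{\rh,\mu}(h)\xi}{\zeta}=\frac{1}{\val\mu}\int_{\R^2}h(\al A^*+\be B^*+\rh C^*)\,\sp{\xi}{\eta_\mu(\al,\be)}\,\sp{\eta_\mu(\al,\be)}{\zeta}\,d\al d\be,
\end{eqnarray*}
estimate $\val{h(\al A^*+\be B^*+\rh C^*)}\leq\no{h}_\iy$, and apply the Cauchy--Schwarz inequality in the measure $\frac{1}{\val\mu}d\al d\be$. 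The two resulting factors are each controlled by the Parseval identity $\frac{1}{\val\mu}\int_{\R^2}\val{\sp{\xi}{\eta_\mu(\al,\be)}}^2 d\al d\be=\no\xi_2^2$ (and the analogue for $\zeta$), which I obtain by pairing the resolution-of-the-identity Lemma with $\xi$. This gives $\val{\sp{\sigma_{\rh,\mu}(h)\xi}{\zeta}}\leq\no{h}_\iy\no\xi_2\no\zeta_2$, that is $\noop{\sigma_{\rh,\mu}(h)}\leq\no{h}_\iy$.

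Assertion (3) is a one-line computation: since every $P_{\mu(\al,\be)}$ is self-adjoint, taking the adjoint under the integral sign only conjugates the scalar factor, so $\sigma_{\rh,\mu}(h)^*=\frac{1}{\val\mu}\int_{\R^2}\ol{h(\al A^*+\be B^*+\rh C^*)}\,P_{\mu(\al,\be)}\,d\al d\be=\sigma_{\rh,\mu}(h^*)$. Finally, the bound in (2) shows that $\sigma_{\rh,\mu}$ is $\no{\cdot}_\iy$-contractive on the dense subspace $\S(\R^2)$, so it extends by continuity to all of $C_0(\R^2)$, and properties (1)--(3) persist for the extension by density. The one genuinely non-formal step is the norm bound in (2); but it collapses entirely onto the tight-frame Parseval identity, so the essential content is already isolated in the preceding Lemma and the remainder is Cauchy--Schwarz.
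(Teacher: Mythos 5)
Your proof is correct and follows exactly the argument that the paper delegates to Proposition 2.11 of \cite{Lud-Tur}: the coherent-state resolution of the identity from the preceding Lemma yields the Parseval identity, Cauchy--Schwarz in the measure $\frac{1}{\val\mu}\,d\al\, d\be$ gives the bound $\noop{\si_{\rh,\mu}(h)}\leq\no h_\iy$, and absolute convergence of the Bochner integral of rank-one operators gives convergence, compactness and the involutivity. The only cosmetic point is that the Parseval identity is stated in the Lemma for $\xi\in\S(\R)$, so the sesquilinear-form estimate should first be carried out for $\xi,\zeta$ in this dense subspace and then extended by continuity, which changes nothing.
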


\begin{theorem}
 Let $a\in C^*(G_{5,3})$ and let $\va$ be the operator field
$\va=\F(a)$. Then the function $ \va(0):(\al,\be)\to \F(a)(\al,\be) $ is contained $ C_0(\R^2) $. 
Let $ \ol\ga=(\ga_k=(\rh_k,\mu_k))_k $ be a properly converging sequence in $ \GA^{5,3}_1 $ 
having its limit set $ L(\ol\ga)=\R A^* +\R B^* +\rh C^* $ in $ \GA^{5,3}_0 $.  
Then
\begin{eqnarray}
 \underset{k\to\infty}{\lim}\Vert
\va(\ga_k)-\sigma_{\ga_k}(\va(\rh))\Vert_{op}=0.
\end{eqnarray}
\end{theorem}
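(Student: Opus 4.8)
The plan is to follow the coherent-state scheme used for the Heisenberg groups in \cite{Lud-Tur} (Theorem 2.12) and already invoked for $G_{5,2}$, adapting it to the multiplication and the characters of $G_{5,3}$. First I would reduce to a dense subalgebra. Since $\noop{\pi_{\ga_k}(a)}\le\no a$ for every $k$ and, by the preceding Proposition (item 2), $\noop{\sigma_{\ga_k}(\va(\rh))}\le\no{\va(\rh)}_\iy\le\no a$, both families of operators are uniformly bounded; moreover $a\mapsto\va(\rh)$ is the norm-decreasing restriction of the (continuous, vanishing-at-infinity) field $\F(a)$ to the character layer $\GA^{5,3}_0$, so $\va(\rh)\in C_0(\R^2)$ as asserted. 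Because $L^1_c(G_{5,3})$ is dense in $C^*(G_{5,3})$, a standard $\ve/3$ argument then reduces the statement to the case $a=F\in L^1_c(G_{5,3})$.

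Next I would analyse the action of $\pi_{\rh,\mu}(F)$ on the coherent states $\eta_\mu(\al,\be)$. Using the explicit kernel $\pi_{\rh,\mu}(F)\xi(a')=\int_\R\wh F^{P}(a'-a,a\cdot p_{\rh,\mu})\,da$ together with the Gaussian-type concentration of $\eta_\mu(\al,\be)$ (which localises near the position $s=-\be/\mu$ and the frequency $\al$, with widths of order $\val\mu^{-1/2}$ and $\val\mu^{1/2}$ respectively), I would show that
\[
\pi_{\rh,\mu}(F)\,\eta_\mu(\al,\be)=\chi_{\al,\be,\rh}(F)\,\eta_\mu(\al,\be)+R_\mu(\al,\be),
\]
where $\chi_{\al,\be,\rh}(F)=\int F(a,b,c,0,0)e^{-2i\pi(\al a+\be b+\rh c)}\,da\,db\,dc=\va(\rh)(\al,\be)$ is precisely the limiting value of the diagonal matrix coefficient $\langle\pi_{\rh,\mu}(F)\eta_\mu(\al,\be),\eta_\mu(\al,\be)\rangle$, and where the remainder $R_\mu(\al,\be)$ tends to $0$ uniformly in $(\al,\be)$ as $\mu\to0$. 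The uniform smallness of $R_\mu$ comes from the smoothness and compact support of $\wh F^{P}$, via an estimate $\val{\wh F^{P}(s,q)-\wh F^{P}(s,p)}\le\va_0(s)\no{q-p}$ with $\va_0\in C_c$, exactly as in the proof of Proposition \ref{progencondG53}; this is the step that matches $\pi_{\rh,\mu}(F)$, read off diagonally in the coherent-state frame, with the operator $\sigma_{\rh,\mu}(\va(\rh))$, which by construction integrates $\chi_{\al,\be,\rh}(F)$ against the rank-one projections $P_{\mu(\al,\be)}$.

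Finally I would pass from this pointwise statement to operator norm. Writing both $\pi_{\rh,\mu}(F)$ and $\sigma_{\rh,\mu}(\va(\rh))$ in coherent-state form by means of the resolution of the identity $\xi=\frac1{\val\mu}\int_{\R^2}\langle\xi,\eta_\mu(\al,\be)\rangle\eta_\mu(\al,\be)\,d\al\,d\be$ of the preceding Lemma, the difference is controlled by a Schur (Cotlar--Stein) estimate applied to the matrix coefficients $\langle[\pi_{\rh,\mu}(F)-\sigma_{\rh,\mu}(\va(\rh))]\eta_\mu(\al',\be'),\eta_\mu(\al,\be)\rangle$. The Schwartz decay of $\eta$ forces the overlaps $\langle\eta_\mu(\al',\be'),\eta_\mu(\al,\be)\rangle$ to decay rapidly off the diagonal $(\al,\be)=(\al',\be')$, so that the resulting integral kernel has total mass of order $\sup_{\al,\be}\no{R_\mu(\al,\be)}\to0$. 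I expect the main obstacle to be precisely this last step: upgrading the pointwise convergence of symbols to a genuine operator-norm bound that is uniform over the whole phase space $\R^2$ and along the sequence $\ga_k=(\rh_k,\mu_k)$ with $\mu_k\to0$. Once the two ingredients are in place—uniform smallness of $R_\mu$ from the compact support of $\wh F^{P}$, and off-diagonal decay of the coherent-state overlaps from the Schwartz property of $\eta$—the estimate is identical to the Heisenberg transition of \cite{Lud-Tur}, and letting $k\to\iy$ yields $\lim_{k\to\iy}\noop{\va(\ga_k)-\sigma_{\ga_k}(\va(\rh))}=0$.
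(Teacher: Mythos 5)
Your proposal is correct and follows essentially the same route as the paper, which for this theorem simply invokes Theorem 2.12 of \cite{Lud-Tur}: the coherent-state scheme you outline (resolution of the identity by the $\eta_\mu(\al,\be)$, identification of the diagonal symbol with $\chi_{\al,\be,\rh}(F)$, uniform remainder estimates from the compact support of $\wh F^{P}$, and off-diagonal decay of the overlaps) is precisely the content of that argument, transplanted to $G_{5,3}$ exactly as the paper intends. The only caveat is that your final Schur/Cotlar--Stein step remains a sketch, but it is the same step carried out in detail in \cite{Lud-Tur}, so no new idea is missing.
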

\begin{proof}
 The proof is the same as that of  Theorem $2.12$ in [Lud-Tur]. 
\end{proof}

\section{The $C^*-$algebra of the group $G_{5,4}.$}

   Recall that the  Lie algebra of $\g_{5,4}$ is spanned by the basis
$\B=\{A,B,C,U,V\}$ equipped with the Lie brackets
\begin{eqnarray}\label{defofbracg54}
 \nn [A,B]= C, [A,C]= U,  [B,C]= V.
\end{eqnarray}

This Lie algebra has  a two-dimensional centre $\z=\text{span}\{U,V\}.$ 
The group $G_{5,4}=\exp(\g_{5,4})$ can be realized as $\R^5$ with 
the Campbell-Baker-Hausdorff multiplication
\begin{eqnarray}\label{defofmulG54} 
\nn &&(a,b, c, u, v)\cdot(a',b', c' , u' , v')\\
\nn &=&(a+a',b+b',c+c'-a'b,u+u'-a'c+\frac{a'^2b}{2},v+v'+\frac{bc'} { 2 } -\frac{b'c}{2}+\frac{a'b'b}{2}).
\end{eqnarray}
For all $(a,b,c,u,v)\in G_{5,4},\ (\al,\be,\rh,\mu,\nu)\in\g_{5,4}^*$ we obtain the following expression for $Ad^*(a,b,c,u,v):$
\begin{eqnarray*}\label{defoforbG54}
&&\nn\Ad^*((a,b,c,u,v))(\al,\be,\rho,\mu,\nu)\\ 
&=&(\al- b\rho-
c\mu-\mu\frac{ab}{2}-\nu\frac{b^2}{2},
\be+\rh a-\nu c+\nu\frac{ab}{2}+\mu\frac{a^2}{2},
\rho+\mu a+\nu b,\mu,\nu).
\end{eqnarray*}
We give now a parameterization of the co-adjoint orbits:
 The generic elements $\ell=(\al,\be,\rh,\mu,\nu)$ in $\g_{5,4}^*$, are those for which a non-zero value $r_{\mu,\nu}=\sqrt{\mu^2+\nu^2}.$  
As in (\ref{basisp}) we take a new basis $\B_{\mu,\nu}$ of $\g_{5,4}.$ For that, letting $\tilde\mu:=\frac{\mu}{r_{\mu,\nu}},\ 
\tilde\nu:=\frac{\nu}{r_{\mu,\nu}}$ we put:
$$\B_\ell=\B_{\mu,\nu}:=\left\{A_{\mu,\nu}=A_\ell:=\tilde\mu A+\tilde\nu B,B_{\mu,\nu}=B_\ell:=-\tilde\nu A+\tilde\mu B,C,U,V\right\}.$$
In the dual basis $\B_\ell^*$ of the basis $\B_{\ell}$ the orbit $\O_\ell$ of the element $\ell=\ell_{\be,\mu,\nu}=\be B_\ell^*+\mu U^*+\nu V^*$
is given by: 
\begin{eqnarray}\label{genlayorG54}
 \O_{\ell}&=&\left\{a A_\ell^* +(\be+\frac{c^{2}}{2r_{\mu,\nu}}) B_\ell^*+c C^*+\mu U^*
+\nu V^*, a, c \in\R \right\}. 
\end{eqnarray}
It follows from this description that the function
$$Q:\ell=aA_\ell^*+bB_\ell^*+cC^*+\mu U^*+\nu V^*\to 2br_\ell-c^2$$ 
is $G_{5,4}-$invariant on this set. The stabilizer $\g_{5,4}(\ell)$ of $\ell$ is the sub-algebra 
$$\g_{5,4}(\ell)=\text{span}\{B_\ell,U,V\}.$$
We denote by $\GA_2^{5,4}$ the orbit space of this family of generic co-adjoint orbits parameterized by the set 
$$\GA_2^{5,4}:=\{\ell_{\be,\mu,\nu}\equiv(\be,\mu,\nu),\be\in\R,\ (\mu,\nu)\in\R^2\setminus\{(0,0)\}\}$$
Since $G_{5,4}/\exp(\z)=H_1$ we can decompose the orbit space $\g_{5,4}^*/G_{5,4},$ and hence also the dual space $\wh G_{5,4}$, 
into the disjoint union 
$$\g_{5,4}^*/G_{5,4}=\GA_2^{5,4}\dot\cup\GA_1^1\dot\cup\GA_0^1,$$
where $\GA_0^1$ and $\GA_1^1$ are as in (\ref{defofganHeis}).
\subsection{Limit sets of properly converging sequences in $\GA_2^{5,4}$.}
\begin{theorem}\label{goinginfG54}
  The sequence $ (\O_{\be_k, \mu_k,\nu_k})_k $ goes to infinity if
and only if the real sequence $ (\sqrt{\mu_k^2+\nu_k^2}+\ve_k \be_k+(\ve_k-1)\be_kr_k)_k $ goes to
infinity, where for $ k\in\N $:
\begin{equation}\label{}
\nonumber \ve_k:=\left\{\begin{array}{cc}
1 &\text{ if }\be_k>0\\
0 &\text{ if  }\be_k\leq 0.\\  
\end{array}
\right.
\end{equation}
\end{theorem}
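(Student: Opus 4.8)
The plan is to recast the statement as a claim about the Euclidean distance from the origin to the orbits, to compute that distance explicitly from the orbit description \eqref{genlayorG54}, and then to compare it with the quantity appearing in the theorem.

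First I would record the elementary reformulation of Definition \ref{conin}. Since coadjoint orbits of a connected nilpotent group are closed, and since closed bounded subsets of $\g_{5,4}^*$ are compact, the sequence $(\O_{\be_k,\mu_k,\nu_k})_k$ goes to infinity if and only if $d_k:=\inf_{p\in\O_{\be_k,\mu_k,\nu_k}}\no{p}\to\iy$. Indeed, if $d_k\to\iy$ then every ball, hence every compact set, is eventually disjoint from the orbits; conversely, if some subsequence satisfies $d_{k_j}\leq R$, then each orbit $\O_{\be_{k_j},\mu_{k_j},\nu_{k_j}}$ meets the compact ball of radius $R+1$, so the sequence does not go to infinity.

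Next I would compute $d_k$. Fixing the orthonormal basis $\B=\{A,B,C,U,V\}$ for the chosen scalar product, the rotated family $\{A_\ell,B_\ell,C,U,V\}$ is again orthonormal, so the dual basis is orthonormal as well. Reading off the coordinates of a generic orbit point from \eqref{genlayorG54}, for $\ell=\ell_{\be,\mu,\nu}$ with $r=r_{\mu,\nu}$ one gets
\[
\no{aA_\ell^*+(\be+\tfrac{c^2}{2r})B_\ell^*+cC^*+\mu U^*+\nu V^*}^2=a^2+\big(\be+\tfrac{c^2}{2r}\big)^2+c^2+r^2 .
\]
Minimizing over $a$ forces $a=0$, and minimizing the one-variable function $t\mapsto(\be+\tfrac{t}{2r})^2+t$ over $t=c^2\geq0$ gives a minimum at $t=0$ when $\be\geq-r$ and at $t=-2r(r+\be)$ when $\be<-r$. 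This yields the piecewise formula
\[
d^2=\begin{cases} r^2+\be^2 & \text{if } \be\geq-r,\\ -2r\be & \text{if } \be<-r. \end{cases}
\]

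Finally I would show that $d_k\to\iy$ is equivalent to the divergence of $g_k:=r_k+\ve_k\be_k+(\ve_k-1)\be_k r_k$, which equals $r_k+\be_k$ when $\be_k>0$ and $r_k(1-\be_k)$ when $\be_k\leq0$. Since both $(d_k^2)$ and $(g_k)$ are nonnegative, it suffices to check that along any subsequence $d_k^2$ is bounded if and only if $g_k$ is bounded; passing to a further subsequence on which the sign of $\be_k$ and the regime (whether $\be_k\geq-r_k$ or $\be_k<-r_k$) are constant reduces this to three elementary comparisons. The only point requiring care, and the place I expect to be the main obstacle, is that $d^2$ and $g$ scale differently in the different regimes --- one has $d^2\asymp g^2$ when $\be>0$ but $d^2\asymp g$ when $\be<-r$ --- so no single power of $g$ controls $d^2$ globally; nevertheless in each regime boundedness of one quantity is equivalent to boundedness of the other, which is exactly what the definition of going to infinity requires. (A revealing check is $\be_k\to-\iy$ with $r_k\to0$ and $r_k\be_k\to0$: here both $d_k$ and $g_k$ tend to $0$, and the orbit in fact approaches the origin, confirming that a diverging $\be_k$ alone does not force the orbit to infinity.)
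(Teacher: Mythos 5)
Your argument is correct. Both proofs rest on the orbit description (\ref{genlayorG54}), but they are executed differently: the paper argues by extracting convergent subsequences of orbit points --- in one direction reading off that $2\be_{k_j}r_{k_j}+c_j^2$ must converge, in the other choosing $c_j$ with $c_j^2=-2\be_{k_j}r_{k_j}$ to exhibit a bounded orbit point --- whereas you compute the distance $d_k$ from the origin to $\O_{\be_k,\mu_k,\nu_k}$ in closed form and compare its growth with that of $g_k$ regime by regime. Your route costs a short convex minimization but buys an exact formula ($d^2=r^2+\be^2$ for $\be\geq-r$ and $d^2=-2r\be$ for $\be<-r$, which agree at $\be=-r$) that makes the asymmetry between $\be>0$ and $\be\leq0$ --- the reason for the indicator $\ve_k$ in the statement --- completely transparent. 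It also sidesteps a rough spot in the paper's forward direction, where convergence of $(2\be_{k_j}r_{k_j}+c_j^2)$ is said to force convergence of $(\be_{k_j})$; that implication fails without first splitting on the sign of $\be_{k_j}$ (take $r_{k_j}=1/j$, $\be_{k_j}=-j$, $c_j=\sqrt 2$, for which the orbit points stay bounded while $\be_{k_j}\to-\iy$), although the paper's conclusion survives because $g_k$ involves $\be_k$ itself only when $\be_k>0$. Your regime-by-regime boundedness comparison, together with the observation that $|\be_k|>r_k$ forces $r_k^2<r_k|\be_k|$ in the third regime, handles exactly this point, and your closing example confirms that a divergent $\be_k$ alone does not push the orbits to infinity.
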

\begin{proof}
 Suppose that the sequence of orbits does not tend to infinity. Then there is a convergent sub-sequence $(r_{k_j})$ and convergent sequence 
 $(c_j)$ such that $(\frac{2\be_{k_j}r_{k_j}+c_j^2}{2r_{k_j}})$ is convergent. Multiplying by $2r_{k_j}$ we see that $(2\be_{k_j}r_{k_j}+c_j^2)$ 
 is convergent and hence $(\be_{k_j}r_{k_j})$  and $(\be_{k_j})$ are convergent. Hence 
 $(\sqrt{\mu_{k_j}^2+\nu_{k_j}^2}+\ve_{k_j} \be_{k_j}+(\ve_{k_j}-1)\be_{k_j}r_{k_j})$ is convergent and so 
 $(\sqrt{\mu_k^2+\nu_k^2}+\ve_k \be_k+(\ve_k-1)\be_kr_k)$ does not tend to infinity.\\
 Conversely, suppose that $(\sqrt{\mu_k^2+\nu_k^2}+\ve_k \be_k+(\ve_k-1)\be_kr_k)$ does not tend to infinity. 
 Then there is a convergent sub-sequence $(r_{k_j})$ such that $(\be_{k_j}r_{k_j})$ is also convergent. We may choose convergent sequence 
 $(c_j)$ such that $2\be_{k_j}r_{k_j}+c_j^2=0$ for all $j$. Then $(0,0,c_j,\mu_{k_j},\nu_{k_j})\in\O_{k_j}$ and the sequence of functionals  
converges as $j\to\iy.$ Hence the sequence of orbits $(\O_k)$ does not tend to infinity.  
\end{proof}

\begin{theorem}\label{limitsetgenG54}
$  $
\begin{enumerate}
\item On the set $\GA_2^{5,4}$ the dual topology is Hausdorff.
 \item Let  $\ol\O=(\O_{\ell_k}=\O_{\be_k B_k^*+\mu_k U^*+\nu_k V^*})_k$ (where $ B_k:=B_{\ell_k}, k\in\N $) be a 
sequence in $ \GA_2^{5,4} $ with  $\lim_k r_k=\lim_k\sqrt{\mu_k^2+\nu_k^2}=0$.  We can assume (passing if necessary to a sub-sequence) 
that the real sequence $ (\tilde \mu_k)_k $  (res. $ (\tilde\nu_k)_k$) converges to $ \tilde\mu\ (\textrm{resp. to }\tilde\nu )$. 
Then the sequence of vectors $ (A_k=A_{\ell_k})_k $, resp. $ (B_k=B_{\ell_{k}})_k $ converges  to the vector 
$ A_\iy=\tilde\mu A+\tilde \nu B $ (resp. to $ B_\iy=-\tilde\nu A+\tilde\mu B $). 
\begin{itemize}\label{}
\item If $ \ol\O $ 
 has  a limit, then
$d:= \lim_{k\to\iy}(-2\be_kr_k=:d_k)$ exists and $ d\geq 0 $. 
\item  Suppose now that   $ \ol \O=(\O_{\ell_k}=\O_{\be_k B_k^*+\mu_k U^*+\nu_k V^*})_k$ is a properly converging sequence in 
$ \GA_2^{5,4} $.  
If $ \ol \O $ admits a limit in $ \GA_1^1\cup \GA_0^1 $, then its limit set  $ L(\ol\O) $ is given by:
 \begin{enumerate}\label{}
\item if $ d\ne 0 $ then $ L(\ol\O)=\left\{\O_{\sqrt  {d}}, \O_{-\sqrt {d}}\right\}$.
\item  if $ d=0 $, then the number  $\be_\iy :=\limsup_k  \be_k$ is contained in  $ [-\iy, +\iy[  $ and 
\begin{equation}\label{}
L(\ol\O)=\left\{ 
\begin{array}{cc}
\R A_\iy ^*+[\be_\iy,+\iy[ B_\iy^* &\text{ if }\be_\iy\in\R\\
\R A_\iy ^*+]-\iy,+\iy[ B_\iy^*=\GA_0^1&\text{ if }\be_\iy=-\iy. \\
\end{array}
\right.
\end{equation}
\end{enumerate}
\end{itemize}
\end{enumerate}

\end{theorem}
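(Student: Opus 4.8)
The plan is to reduce everything to the single $G_{5,4}$-invariant $Q=2br_\ell-c^2$ on the generic layer, which on $\O_{\ell_k}$ takes the constant value $Q=2\be_k r_k=-d_k$ by the orbit description (\ref{genlayorG54}). Equivalently, a functional $x=aA_k^*+bB_k^*+cC^*+\mu_k U^*+\nu_k V^*$ lies in $\O_{\ell_k}$ precisely when $c^2-2r_k b=d_k$, with $a,c$ otherwise free; this relation is the engine for the whole argument. For the first assertion I would observe that $(\mu,\nu,Q)=(\mu,\nu,2\be r)$ are continuous $G_{5,4}$-invariants which together recover the parameter (namely $\be=Q/2r$ since $r\ne0$), so they separate the orbits of $\GA_2^{5,4}$ and the relative topology is Hausdorff. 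The convergences $A_k\to A_\iy=\tilde\mu A+\tilde\nu B$ and $B_k\to B_\iy=-\tilde\nu A+\tilde\mu B$ are immediate from $\tilde\mu_k\to\tilde\mu,\ \tilde\nu_k\to\tilde\nu$; since $A_k^*,B_k^*$ then tend to a basis of $\R A^*+\R B^*$, convergence of functionals is equivalent to convergence of the coordinates $(a_k,b_k,c_k,\mu_k,\nu_k)$. For the first bullet I would pick $x_k\in\O_{\ell_k}$ with $x_k\to x_\iy$; then $c_k\to c_\iy$ and $b_k\to b_\iy$ are finite while $r_k\to0$, so $d_k=c_k^2-2r_k b_k\to c_\iy^2=:d\ge0$, the boundedness of $(d_k)$ (hence existence of the honest limit) being guaranteed by Theorem \ref{goinginfG54}, since an unbounded subsequence of $d_k$ would go to infinity.

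For the second bullet with $d\ne0$, every subsequential limit $x_\iy$ satisfies $c_\iy^2=d$, so $c_\iy=\pm\sqrt{d}\ne0$ and $x_\iy\in\GA_1^1$. Conversely, fixing a sign and arbitrary $a_\iy,b_\iy\in\R$, I would set $a_k=a_\iy$ and $c_k=\pm\sqrt{d_k+2r_k b_\iy}$ (which tends to $\pm\sqrt{d}$); the invariant relation then forces $b_k=b_\iy$ exactly, so $x_k\to a_\iy A_\iy^*+b_\iy B_\iy^*\pm\sqrt{d}\,C^*$. Hence each flat orbit $\O_{\pm\sqrt{d}}=\pm\sqrt{d}\,C^*+\R A^*+\R B^*$ is entirely swept out, no character appears (as $c_\iy\ne0$), and $L(\ol\O)=\{\O_{\sqrt{d}},\O_{-\sqrt{d}}\}$.

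For $d=0$ one has $c_\iy=0$, so the limit points lie in $\GA_0^1=\R A^*+\R B^*$. Here the relation reads $c_k^2=2r_k(b_k-\be_k)$, and its positivity together with $r_k>0$ forces $b_k\ge\be_k$ along the full sequence, so any limit value satisfies $b_\iy\ge\limsup_k\be_k=\be_\iy$. Conversely, for $b_\iy\ge\be_\iy$ I would take $b_k:=\max(b_\iy,\be_k)\to b_\iy$ and $c_k:=\sqrt{2r_k(b_k-\be_k)}\to0$ (using $r_k\be_k=-d_k/2\to0$), sweeping out $\R A_\iy^*+[\be_\iy,+\iy[\,B_\iy^*$ when $\be_\iy\in\R$; when $\be_\iy=-\iy$ the constraint is vacuous and $L(\ol\O)=\GA_0^1$. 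The case $\be_\iy=+\iy$ cannot occur for a convergent sequence, since then $b_k\ge\be_k\to+\iy$ on a subsequence, which goes to infinity by Theorem \ref{goinginfG54}.

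The hard part will be the $d=0$ case, and specifically the appearance of $\limsup_k\be_k$ rather than $\liminf_k\be_k$: this hinges on using the positivity of $c^2/2r_k$ together with the full-sequence definition of the limit set, whereas a subsequence-only reading would erroneously yield $\liminf$. The other delicate points are the correct use of Theorem \ref{goinginfG54} to exclude $\be_\iy=+\iy$ and to upgrade a subsequential limit of $(d_k)$ to a genuine limit $d\ge0$.
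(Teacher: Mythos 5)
Your proposal is correct and follows essentially the same route as the paper: both rest on the orbit description (\ref{genlayorG54}) (equivalently the invariant $Q=2br_\ell-c^2$), extract $d=\lim_k(-2\be_k r_k)=\ell(C)^2$ from a convergent sequence of orbit points, and then construct explicit functionals $aA_k^*+bB_k^*+c_kC^*+\mu_kU^*+\nu_kV^*$ with $c_k$ chosen so that the $B_k^*$-coordinate has the prescribed limit. Your write-up is in fact slightly more careful than the paper's at exactly the points you flag (the full-sequence $\limsup$ argument via $b_k\geq\be_k$, the boundary value $b_\iy=\be_\iy$ handled directly with $b_k=\max(b_\iy,\be_k)$ instead of by closedness of $L(\ol\O)$, and the explicit exclusion of $\be_\iy=+\iy$), but the underlying argument is the same.
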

\begin{proof}$  $
\begin{enumerate}\label{}
 \item The point 1) is evident.
\item
\begin{itemize}
\item Let $ \ol\O $ be such a sequence in $ \GA_2^{5,4} $ having a limit.
Let $ \ell $ be a point in a  limit orbit of the sequence $ \ol\O $ and 
let $ m_k=a_k A_k^* +(\be_k+\frac{c_k^2}{2r_k})B_k^*+c_kC^*+\mu_k U+\nu_k V^*\in \O_k, k\in\N  $, such that $ \lim_k m_k=\ell $. 
Then:
\begin{eqnarray}\label{}
\nn \ell(B_\iy)&=&\lim_k m_k(B_k)\\
\nn &=&\lim_k \frac{2\be_k r_k+m_k(C)^{2}}{2r_k}.
\end{eqnarray}
This shows that $ \lim_k (2\be_k r_k+m_k(C)^{2})=0 $ since $ \lim_k r_k =\lim_k
\sqrt{\mu_k^{2}+\nu_k^{2}}=0$, i.e. $ \lim_k (-2\be_k r_k)=\ell(C)^2=:d $.
\item  
\begin{enumerate}\label{}
\item Let now $ \ol\O $ be properly convergent and suppose that $ d>0 $. 
For $ k $ large enough, we have that $ \be_k> 0 $ and then the element $
m_{k,\pm}:=\pm\sqrt {-2\be_kr_k}C^*+ \mu_k U^*+\nu_k V^*$
 of $ \O_k $ converges to $ \pm\sqrt {d} C^* $. Hence the orbits $ \O_{\pm
\sqrt {d}} $ are contained in $ L(\ol \O) $. 
On the other hand, every other $ \ell $ in the limit set $ L(\ol\O) $ satisfies the
relation $d=\ell(C)^{2}$,
 which means that $ \ell(C)=\pm\sqrt{ d} $. Hence $
L(\ol\O)=\{\O_{\sqrt{d}},  \O_{-\sqrt {d}}\} $. 
\item
If now $ d=0 $, we see in a similar manner that the limit of the sequence
$ \ol \O $ must be characters, i. e. vanish on $ C $. 
Choose a sub-sequence (also denoted by $ \ol\O $ for simplicity), such that $
\lim_k \be_k=\be_\iy$. 
Take  any sequence 
\[ \left(m_k=a_k A_k^*+(\frac{2\be_k r_k+c_k^2}{2r_k})B_k^*+ c_k C^*+\mu_k
U^*+\nu_k V^*\in\O_k\right)_k \]
 which converges to  $ \ell=aA^*+b B_\iy^*\in L(\ol\O) $ for some $ a,b\in\R $. 
Since
\begin{eqnarray*}
b=\lim_{k\to\iy}(\be_k+\frac{c_k^2}{2r_k})=\lim_k \be_k+\lim_k \frac{c_k^2}{2r_k}\geq \lim_k \be_k=\be_\iy,
\end{eqnarray*}
it follows that $\be_\iy\leq b<+\iy$.  On the other hand, for any $
a\in\R,\  b> \be_\iy $,  we have that $ b\geq \be_k $ for $ k $ large enough and the sequence 
\[ \left(m_k=a A_k^*+bB_k^*+ \sqrt{(b-\be_k)2r_k} C^*+\mu_k
U^*+\nu_k V^*\in\O_k\right)_k \]
converges to 
$ aA_\iy^*+b B_\iy ^* $, since $ \lim_k \be_k r_k=0 $. Hence, since $L(\ol\O)$ is closed, 
\[ L(\ol \O)=\R A_\iy ^*+[\be_\iy,+\iy[ B_\iy^* (\textrm{ resp. }L(\ol \O)=\R A_\iy ^*+]-\iy,+\iy[ B_\iy^* ).    \]
 \end{enumerate}
 \end{itemize}
\end{enumerate}
\end{proof}
\subsection{The Fourier transform.}
Let
$\ell\in \GA_2^{5,4}$, then $\dim {\O_\ell} =2$. A polarization at $ \ell=\ell_{\be,\mu,\nu}= \be B_\ell^*+\mu U^*+\nu V^*$
is given by $ \p_\ell=\p_{\mu,\nu}=\textrm{span}\{B_\ell,C,U,V\}$.
Then $ G_{5,4}=\exp({\R A_\ell})P_\ell $ and $ P_\ell=\exp({\R B_{\mu,\nu}})P $ as
topological products.
We take $ \pi_\ell:=\ind{P_\ell} {G_{5,4}} \ch_\ell .$ Its Hilbert space is
isomorphic to $ \l2\R $ and for 
$ g= \exp({s A_\ell})p, s,u \in \R,\ p\in P_\ell$  and  $ \xi\in \l2\R $ we have
\begin{eqnarray}\label{}
\nn \pi_\ell(g)\xi(u)&=&e^{-2\pi i \langle{\textrm{exp}{(u-s)A_\ell})\cdot
\ell},{\log(p_\ell)}\rangle}\xi(u-s),
\end{eqnarray}
and so for $ F\in \l1 {G_{5,4}} $:
\begin{eqnarray}\label{opkergenG54}
 \nn\pi_{\ell}(F)\xi(t)&=& \int_{G_{5,4}}F(g)\pi_{\ell}(g)\xi(t)dg\\
 &=& \int_{\R}\widehat F^{P_\ell}(s-t,s.\ell\res{\p_\ell})\xi(s)ds.
\end{eqnarray}
Here 
\begin{eqnarray*}
\wh F^{P_\ell}(s,q)=\int_{P_\ell}F(\exp{(s A_\ell)}p)\ch_q(p)dp, q\in  \p_\ell^*, s\in\R.
\end{eqnarray*}
and $ s\cdot q=\exp(s A_{\mu,\nu})\cdot q,\  q\in \p_\ell^* $.
For $ F\in L_c^1(G_{5,4}) $, the function $ \wh F^{P_\ell} $ is of compact support in $ s\in\R $ and 
Schwartz in the variable $ q\in  \p_\ell^* $.
\begin{definition}
  The Fourier transform $\hat a=\F(a)$ of an element
$a\in C^*(G_{5,4})$ is defined as the field of bounded linear operators  over the dual space of $ G_{5,4}$, but
 where we put  all the unitary characters of $ G_{5,4}$ together to form 
 the representation $ \pi_0 $ $($here $\pi_0$ is the left regular representation of the group $G_{5,4})$.   
 This gives us the set $ \GA_2^{5,4}\cup\GA_1^1\cup\GA_0^1 $ and we define for
 $ a \in C^*(G_{5,4}) $ the operator field:
\begin{eqnarray}\label{deftranfoG54}
\nn &&\wh
a(\be,\mu,\nu)=\F(a)(\be,\mu,\nu):=\pi_{\be,\mu,\nu}(a)
\in\K(\l2\R),\ (\be,\mu,\nu)\in \GA_2^{5,4};\\
\nn &&\wh a(\rh)=\F(a)(\rh):=\pi_{\rh}(a)
\in\K(\l2\R),\ \rh\in \GA_1^1;\\
\nn&&\wh a(0)=\F(a)(0):=\pi_0(a)\in C^*(\R^2)\subset \B(\l2{\R^2}).
\end{eqnarray}
\end{definition}
\subsection{The continuity condition.}
We have seen in Theorem\ref{limitsetgenG54}, that the topology of the sub-set $\GA_2^{5,4}$ is Hausdorff. This means for the F's in $C^*(G_{5,4}),$ 
that the functions $\pi\to\noop{\pi(F)}$ are continuous on this set.
\begin{theorem}
 The mapping $\GA_2^{5,4}\to\B(L^2(\R)):\ell\mapsto\pi_\ell(F)$ is norm-continuous for all $F\in C^*(G_{5,4}).$
\end{theorem}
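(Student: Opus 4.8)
The plan is to reduce to a dense subalgebra and then bound the operator norm of a difference of kernel operators by Young's inequality. For each $\ell$ the map $F\mapsto\pi_\ell(F)$ is a $*$-representation, hence norm-decreasing: $\noop{\pi_\ell(F)-\pi_\ell(F')}\le\no{F-F'}$ in the $C^*$-norm, and this bound is uniform in $\ell$. Consequently a uniform limit of norm-continuous operator fields $\ell\mapsto\pi_\ell(F')$ is again norm continuous, so it suffices to establish the statement for $F$ in the dense subspace $L^1_c(G_{5,4})$. Moreover, by Kirillov's homeomorphism (Section \ref{kirrth}) and the explicit orbit description (\ref{genlayorG54}), the Hausdorff topology on $\GA_2^{5,4}$ agrees with the topology of its parametrizing set $\{(\be,\mu,\nu):r_{\mu,\nu}\ne0\}\subset\R^3$, which is metrizable; hence I will prove sequential continuity: fix $\ell\in\GA_2^{5,4}$ and a sequence $\ell_n\to\ell$ in $\GA_2^{5,4}$, and show $\noop{\pi_{\ell_n}(F)-\pi_\ell(F)}\to0$.

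For $F\in L^1_c(G_{5,4})$ the operator $\pi_\ell(F)$ is, by (\ref{opkergenG54}), the integral operator on $L^2(\R)$ with kernel $F_\ell(t,s)=\wh F^{P_\ell}(s-t,\,s\cdot\ell\res{\p_\ell})$, where $s\cdot\ell\res{\p_\ell}=\exp(sA_\ell)\cdot\ell\res{\p_\ell}$ and the coordinate $s$ records the $A_\ell$-component of $g=\exp(sA_\ell)p$, so that all the $\pi_{\ell_n}$ act on the same $L^2(\R)$. Since $r_{\mu_n,\nu_n}\to r_{\mu,\nu}\ne0$, the normalized coefficients $\tilde\mu_n,\tilde\nu_n$ converge, whence the bases $\B_{\ell_n}$, the vectors $A_{\ell_n},B_{\ell_n}$, and the polarizations $\p_{\ell_n}=\text{span}\{B_{\ell_n},C,U,V\}$ all converge to those attached to $\ell$. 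The difference $\pi_{\ell_n}(F)-\pi_\ell(F)$ is then the kernel operator with kernel $F_{\ell_n}(t,s)-F_\ell(t,s)$, and by the Young-type estimate recalled in Section \ref{tools} its operator norm is at most $\no{\psi_n}_1$ for any $\psi_n\in L^1(\R)$ dominating this kernel, i.e. with $\val{F_{\ell_n}(t,s)-F_\ell(t,s)}\le\psi_n(s-t)$.

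The heart of the argument is producing such a $\psi_n$ with $\no{\psi_n}_1\to0$. Because $\wh F^{P_\ell}(w,q)$ has compact support in $w\in\R$ (uniformly for $\ell$ near the limit) and is Schwartz in $q\in\p_\ell^*$, while the first argument $w=s-t$ is confined to a fixed compact interval $[-M,M]$ on the support of the kernel, the polynomial growth in $s$ of the coadjoint image $s\cdot\ell\res{\p_\ell}$ is swallowed by the rapid decay of $\wh F^{P_\ell}$ in $q$. Exploiting the joint smoothness of $(w,q,\ell)\mapsto\wh F^{P_\ell}(w,q)$ together with the continuous dependence on $\ell$ of the polarization $\p_\ell$ and of the map $s\mapsto s\cdot\ell\res{\p_\ell}$, I expect to obtain a single $\va\in C_c(\R)$ and a sequence $\de_n\to0$ with $\val{F_{\ell_n}(t,s)-F_\ell(t,s)}\le\de_n\,\va(s-t)$; taking $\psi_n=\de_n\va$ then gives $\noop{\pi_{\ell_n}(F)-\pi_\ell(F)}\le\de_n\no\va_1\to0$, which proves the theorem.

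The main obstacle is exactly this uniform domination. The coordinate $s$ is not confined to a compact set (only $s-t$ is), so on the support of the kernel $s$ ranges over an unbounded region, and there the coadjoint action $\exp(sA_\ell)\cdot\ell\res{\p_\ell}$ grows polynomially while simultaneously the vector $A_\ell$ and the polarization $\p_\ell$ move with $\ell$. The delicate point is to combine the Schwartz decay in $q$ (which controls large $s$) with the continuity of the $\ell$-dependence in a manner that is uniform over this unbounded $s$-range, so that one fixed compactly supported $\va$ dominates every difference with a coefficient $\de_n$ that still tends to $0$.
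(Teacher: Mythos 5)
The paper states this theorem with no proof at all, so there is nothing of the authors' to compare your argument against; what can be said is that your architecture --- reduction to the dense subspace $L^1_c(G_{5,4})$ via the uniform bound $\noop{\pi_\ell(F)}\leq \no{F}$, realization of all the $\pi_{\ell_n}$ on a common $L^2(\R)$ through the $A_{\ell_n}$-transversal coordinate, and control of the difference of kernel operators by Young's inequality --- is exactly the toolkit the paper uses for every other norm estimate (Proposition \ref{weakcon}, Proposition \ref{progencondG53}, Theorem \ref{condigenG54}). However, your write-up stops at the decisive step: the domination $\val{F_{\ell_n}(t,s)-F_\ell(t,s)}\leq\de_n\va(s-t)$ with $\de_n\to 0$ is something you ``expect to obtain,'' and your final paragraph re-describes it as the main unresolved obstacle. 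A proof that ends by naming its own missing estimate is incomplete, so as written there is a genuine gap.

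The gap is closable, and the observation you are missing is that on $\GA_2^{5,4}$ the unbounded range of $s$ causes no difficulty precisely because $r_{\mu,\nu}\neq 0$ there. From the orbit description one has $\exp(sA_{\ell_n})\cdot{\ell_n}\res{\p_{\ell_n}}=(\be_n+\tfrac{r_ns^{2}}{2})B_{\ell_n}^*+(r_ns)C^*+\mu_nU^*+\nu_nV^*$, so the $C^*$-coordinate is $r_ns$ and, since $r_n\to r\neq 0$, satisfies $\val{r_ns}\geq\tfrac{r}{2}\val{s}$ for $n$ large. Hence the arguments $s\cdot{\ell_n}\res{\p_{\ell_n}}$ leave every compact subset of $\p^*$ uniformly in $n$ as $\val{s}\to\iy$; combined with the decay of $\wh F^{P_{\ell_n}}$ in $q$ (uniform in $n$, because these functions are obtained from the fixed $\wh F$ by rotating the basis through the convergent parameters $(\tilde\mu_n,\tilde\nu_n)$), this confines every kernel, up to an error bounded by $\ve\,\va(s-t)$ with $\ve$ arbitrarily small, to a single compact set $\{\val{s}\leq S\}$. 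On $\{\val{s}\leq S\}\ti\{\val{s-t}\leq M\}$ the kernels depend jointly continuously on $(s,t,\be_n,\tilde\mu_n,\tilde\nu_n,r_n)$, and uniform continuity on compacta yields the required smallness; Young's inequality then converts both contributions into operator-norm bounds. This is the same mechanism as Lemma \ref{getsbig} of the paper, only easier here because $r\neq 0$ in the limit. With this supplement your argument becomes a complete proof.
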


\subsection{Passing from $\GA_2^{5,4}$ to $\GA_1^1\cup\GA_0^1.$}
\begin{definition}\label{dnenull}
\rm   Let $ \ol\O=(\O_{\ell_k=\be_k B_{k}^*+\mu_k U^*+\nu_k V^*}) $ be a
properly converging sequence in $ \GA_2^{5,4} $ such that $ \lim_k (r_k=\sqrt {\mu_k^2+\nu_k^2})=0 $. We can assume 
(passing if necessary to a subsequence) that   $ \lim_k \tilde u_k=\tilde\mu $ and $\lim_k \tilde\nu_k =:\tilde{\nu}$ exist too. 
Let 
\begin{eqnarray*}
d_k:=-2\be_k r_k>0 \textrm{ and } t_k=\sqrt{\frac{-2\be_k}{r_k}}=\frac{\sqrt {d_k}}{r_k}, k\in\N.
\end{eqnarray*}

\noindent
By Theorem \ref{limitsetgenG54} and its notations,  $ d=\lim_k d_k  $ exists. If $ d=0 $, then  $ \be_\iy:=\lim_k \be_k  $ 
exists in $ [-\iy, +\iy[ $ and the  limit set of $ \ol\O $ is 
the set $ L(\ol\O)=\{\R A_{\mu,\nu}^*+[\be_\iy,+\iy] B_{\mu,\nu}^*\}  $ where $ A_\ell^*=\tilde \mu A^* +
\tilde\nu B^* $ and $ B_\ell^*=-\tilde \nu A^* +\tilde\mu B^* $. 
Otherwise, i.e. if $ d\ne 0,$  the limit set $ L(\ol\O) $ is given by $ L(\ol\O)=\{{\sqrt d C^*}, {-\sqrt d C^*} \}$.
  Recall that:
\begin{eqnarray*}
\exp{\left((t_k+s) A_k\right)}\cdot p_k&=&\left(\be_k
+\frac{r_k(s+t_{k})^{2}}{2}\right)B_k^*+\left((s+t_{k})r_k\right)C^*+\mu_k U^*+\nu_k
V^*\\
\nn  &=&s\left(
r_k\frac{
s}{2}+r_kt_{k}\right)B_k^*+((s+t_{k})r_k)C^*+\mu_k
U^*+\nu_k
V^*, k\in\N.
\end{eqnarray*}
\end{definition}
\subsubsection*{\bf{If $d\ne 0.$}}
We consider first the case where $ d:=\lim_k d_k \ne 0$. 
Let 
\begin{eqnarray*}
q_k=\sqrt {d_k}C^*, k\in\N^*.
\end{eqnarray*}
Let us compute:
\begin{eqnarray}\label{compdiff}
\nn&&\exp{((s+\pm t_k)A_k)}\cdot p_k-\exp{(s A_k)}\cdot (\pm q_k )\\
\nn&=&\left(\be_k+\frac{r_k (s+\pm t_k)^{2}}{2}-\pm\sqrt {d_k}s\right)B_k^*+\left(
r_k(\pm t_k+s)-\pm\sqrt {d_k}\right)C^*+\mu_k U^*+\nu_k V^*\\
\nn &=&\left(\frac{r_k s^{2}}{2}\right)B_k^*+(r_k s)C^*+\mu_k
U^*+\nu_k V^*.
\end{eqnarray}
Let $ (R_k)_k $ be a sequence in $ \R_+ $ such that $ \lim_k
R_k^2 r_k=0, \lim_k R_k=+\iy $. Let  for $ k\in\N: $
\begin{eqnarray*}
J_k&=&[-{R_k},{R_k}],\\ 
I_{k,\pm}&:=&(\pm t_k+J_k),\\
I_{k,2,\pm}&:=&(\pm t_k+2J_k).
\end{eqnarray*}
Our condition on $ R_k $ tells us that $ \lim_k \frac{ R_k}{t_k}=0 $ and so $ I_{k,+}\cap I_{k,-}=\es $ for $ k $ large enough.
\begin{lemma}\label{getsbig}
Let $ K $ be a compact subset of $ \p^* $. Then 
$ \exp{( (\R\setminus (I_{k,+}\cup I_{k,-}))A_k)}\cdot{\ell_k}\res\p\cap K =\es$ for $ k $ large enough.
 \end{lemma}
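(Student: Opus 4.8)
The plan is to isolate the single coordinate of $\exp(sA_k)\cdot\ell_k\res\p$ that is forced to grow, and to show that it leaves $K$ uniformly as soon as $s$ lies far from both $\pm t_k$. Starting from the coadjoint formula recorded in Definition \ref{dnenull} and replacing the argument $t_k+s$ by $s$ (equivalently, using that $d_k=-2\be_kr_k$ and $t_k=\sqrt{d_k}/r_k$ give $\be_k=-\frac{r_kt_k^2}{2}$), I would write
\begin{eqnarray*}
\exp(sA_k)\cdot\ell_k\res\p=\frac{r_k}{2}(s^2-t_k^2)B_k^*+(sr_k)C^*+\mu_kU^*+\nu_kV^*.
\end{eqnarray*}
Only the component along $B_k^*$, namely $\frac{r_k}{2}(s^2-t_k^2)$, will be used.

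First I would record the uniform bound coming from compactness of $K$: since the basis $\B_{\mu_k,\nu_k}$ is orthonormal, $B_k$ is a unit vector, so $\val{\langle q,B_k\rangle}\leq\no q\leq M$ for every $q\in K$, where $M:=\sup_{q\in K}\no q<\iy$ and $M$ does not depend on $k$. Hence it suffices to prove that the $B_k^*$-component of $\exp(sA_k)\cdot\ell_k\res\p$ has modulus $>M$ for every $s\notin I_{k,+}\cup I_{k,-}$ once $k$ is large, since then no such functional can pair with $B_k$ the way an element of $K$ does.

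For the main estimate I would factor $\val{s^2-t_k^2}=\val{s-t_k}\,\val{s+t_k}$ and read the two factors as the distances from $s$ to the two points $t_k$ and $-t_k$. The distance from $s$ to the farther of $\pm t_k$ is always at least $t_k$, whereas, by the very definition $I_{k,\pm}=(\pm t_k+[-R_k,R_k])$, the hypothesis $s\notin I_{k,+}\cup I_{k,-}$ says precisely that the distance from $s$ to the nearer of $\pm t_k$ exceeds $R_k$. Consequently $\val{s^2-t_k^2}>R_kt_k$, and therefore
\begin{eqnarray*}
\frac{r_k}{2}\val{s^2-t_k^2}>\frac{r_k}{2}R_kt_k=\frac{R_k\sqrt{d_k}}{2},
\end{eqnarray*}
where I used $r_kt_k=\sqrt{d_k}$. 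Since $\lim_kR_k=+\iy$ and $\lim_kd_k=d\neq0$, the right-hand side tends to $+\iy$ uniformly in such $s$; choosing $k$ large enough that $\frac{R_k\sqrt{d_k}}{2}>M$ then forces $\exp(sA_k)\cdot\ell_k\res\p\notin K$ for every $s\notin I_{k,+}\cup I_{k,-}$, which is exactly the claim.

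The only delicate point, and the one I expect to be the main obstacle, is the bookkeeping that converts the hypothesis $s\notin I_{k,+}\cup I_{k,-}$ into the clean lower bound $\val{s^2-t_k^2}>R_kt_k$ uniformly in $s$ and in the sign of $s$; the ``nearer/farther center'' reading of the two factors $\val{s\mp t_k}$ handles both signs simultaneously and avoids splitting into cases. I would also note in passing that $I_{k,+}\cap I_{k,-}=\es$ for $k$ large (already observed in Definition \ref{dnenull}, since $R_k/t_k\to0$), which is what makes the phrase ``the nearer center'' unambiguous.
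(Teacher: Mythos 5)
Your proof is correct and follows essentially the same route as the paper's: both isolate the $B_k^*$-component $\frac{r_k}{2}(s^2-t_k^2)$ of $\exp(sA_k)\cdot\ell_k\res\p$ and bound its modulus below by a multiple of $R_k\sqrt{d_k}\to+\iy$, which eventually exceeds the bound on $K$. Your ``nearer/farther'' factorization of $\val{s-t_k}\,\val{s+t_k}$ merely replaces the paper's explicit case split ($s>t_k+R_k$ versus $0\leq s\leq t_k-R_k$, and symmetrically for $s<0$) and is, if anything, a slightly cleaner way to organize the same estimate.
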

\begin{proof} Take $ R>0 $ such that $K \subset [-R,R] $.
 For $ s>0, s\not \in  I_{k,+}$ we have either $ s>t_k+R_k $ or $ 0\leq s\leq t_k-R_k $ (for $ k $ large enough): 
 In the first case:
\begin{eqnarray*}
\vert\exp{(s A_k)}\cdot \ell_k(B_k)\vert&=&\vert((s-t_k)\cdot(t_k\cdot \ell_k))(B_k)\vert\\
&=&
\left|(s-t_k)\left(
r_k\frac{(
s-t_k)}{2}+r_kt_{k}\right)\right|\\
\nn &\geq& R_k \sqrt{d_k}\geq \frac{R_k\sqrt d}{2} \ (\textrm{ for }k\textrm{ large enough}). 
\end{eqnarray*}
In the second case:
\begin{eqnarray*}
\vert\exp{(s A_k)}\cdot \ell_k(B_k)\vert&=&\vert((s-t_k)\cdot(t_k\cdot \ell_k))(B_k)\vert\\
&=&
\left|(s-t_k)\left(
r_k\frac{(
s-t_k)}{2}+r_kt_{k}\right)\right|\\
\nn &\geq& R_k \left(t_kr_k-\frac{(t_k-s_k)r_k}{2} \right)\geq \frac{R_k\sqrt d}{4}\\
&&\ (\textrm{ for }k\textrm{ large enough}).
\end{eqnarray*}
Similarly for $ s<0,\ s\not\in I_{k,-} $.
This means that for $ k $ large enough, $\exp({t A_k})\cdot {\ell_k}\not\in
 K  $ for $ t\not\in I_{k,\pm} $.
 \end{proof}
\begin{definition}\label{defmksik}
\rm  Let $C_{\ol \O}= CB(L(\ol\O),\B(L^{2}(\R))) $ be the $ C^* $-algebra of all
continuous, uniformly bounded mappings $ \ph:L(\ol\O)\mapsto \B(L^{2}(\R)) $ from
the locally compact space $L(\ol\O)$ into the algebra of bounded linear
operators $ \B(L^2(\R)) $ on the Hilbert space $ L^2(\R) $.

Let for $ k\in \N^*, $ 
\begin{eqnarray*}
\si_{k,\pm}=\ind{P_k}{G_{5,4}}\ch_{\exp(\pm t_k A_k)\cdot (\sqrt {d_k}C^*)},\  \pi_{\sqrt {d_k }C^*}=\ind{P_k}{G_{5,4}}\ch_{\sqrt {d_k}C^*}
\end{eqnarray*}
and let $ u_{k,\pm} $ be the unitary intertwining operator between $ \pi_{\sqrt {d_k }C^*} $ and $ \si_{k,\pm} $. 
\begin{enumerate}\label{}
\item  Let for $ \ph\in C_{\ol\O},\ k\in\N, $
\begin{eqnarray}\label{deftilsi1G54}
\nn\tilde\si_{k,\ol \O}(\ph)=\tilde  \si_{k}(\ph)&:=&M_{I_{k,2,+}}\circ u_{k,+}\circ \ph(\sqrt {d_k})\circ u_{k,+}^*\circ M_{I_{k,+}}\\
&+&M_{I_{k,2,-}}\circ u_{k,-}\circ \ph(-\sqrt {d_k})\circ u_{k,-}^*\circ M_{I_{k,-} }\in
\B(L^{2}(\R)).
\end{eqnarray}
\item  For $ a\in C^*(G_{5,4}) $ let 
\begin{eqnarray*}
 \si_{k,\ol\O}(a)&:=&\tilde\si_{k,\ol \O}(\wh a\res{\GA_1^1}).
\end{eqnarray*}
 \end{enumerate}
 \end{definition}
\begin{theorem}\label{condigenG54}
Let $ \ol\O=(\O_{\ell_k=\be_k B_{k}^*+\mu_k U^*+\nu_k V^*}) $ be a  properly
converging sequence in $ \GA_2^{5,4} $ with limit set $ L=\{\O_{\sqrt d}, \O_{-\sqrt
d}\} $ where $ d=-2\lim \be_k\sqrt{\mu_k^{2}+\nu_k^{2}}=\lim_k (-2\be_k r_k),
k\in\N $. 
Then for every $ a\in C^*(G_{5,4}) $, we have that 
\begin{eqnarray*}
\lim_{k\to\iy}\noop{\pi_{\ell_k}(a)-\si_{k,\ol\O}(a)}=0.
\end{eqnarray*}
 \end{theorem}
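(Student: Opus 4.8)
The plan is to reduce the statement to a branch-by-branch comparison of kernel operators on $L^2(\R)$, using the orbit-point estimate \eqref{compdiff} together with the localization supplied by Lemma \ref{getsbig}. First I would restrict to a dense subalgebra. Each $\pi_{\ell_k}$ is a contraction on $C^*(G_{5,4})$, and, since $\si_{k,\ol\O}(a)=\tilde\si_{k,\ol\O}(\wh a\res{\GA_1^1})$ is by \eqref{deftilsi1G54} a sum of two summands built from unitaries and cut-off projections with mutually disjoint supports, Proposition \ref{sumsofop} shows $\noop{\si_{k,\ol\O}(a)}\leq\no a$ uniformly in $k$. Hence a $3\ve$-argument reduces the claim to proving $\lim_k\noop{\pi_{\ell_k}(F)-\si_{k,\ol\O}(F)}=0$ for $F\in L_c^1(G_{5,4})$, the general case following from density of $L_c^1(G_{5,4})$ in $C^*(G_{5,4})$. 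For such an $F$ the function $\wh F^{P_k}$ is compactly supported in the $A_k$-variable and Schwartz in the dual variable, and by \eqref{opkergenG54} the operator $\pi_{\ell_k}(F)$ is the kernel operator with kernel $(t,s)\mapsto\wh F^{P_k}\big(s-t,\exp{(sA_k)}\cdot p_k\big)$, where $p_k=\ell_k\res{\p_k}$.

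Next I would localize the supports. Fix $M>0$ and a compact set $K$ so that $\wh F^{P_k}(x,q)$ vanishes whenever $\val x>M$ or $q\notin K$ (uniformly for large $k$, using $\B_{\mu_k,\nu_k}\to\B_\iy$). Lemma \ref{getsbig} then gives, for $s\notin I_{k,+}\cup I_{k,-}$, that $\exp{(sA_k)}\cdot p_k$ leaves $K$, so the corresponding column of the kernel is zero; since $R_k\to\iy$, the compact support in $s-t$ confines the nonzero rows over $I_{k,\pm}$ to $I_{k,2,\pm}$, and $I_{k,+}\cap I_{k,-}=\es$ for $k$ large. Consequently
\[\pi_{\ell_k}(F)=M_{I_{k,2,+}}\circ\pi_{\ell_k}(F)\circ M_{I_{k,+}}+M_{I_{k,2,-}}\circ\pi_{\ell_k}(F)\circ M_{I_{k,-}},\]
which matches term by term the two summands of $\si_{k,\ol\O}(F)$ in \eqref{deftilsi1G54}, on disjoint row and column supports.

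The heart of the argument is the comparison on a single branch. Because $u_{k,\pm}$ intertwines $\pi_{\pm\sqrt{d_k}C^*}$ with $\si_{k,\pm}=\ind{P_k}{G_{5,4}}\ch_{\exp{(\pm t_k A_k)}\cdot(\pm\sqrt{d_k}C^*)}$, writing $s=\pm t_k+s'$ with $s'\in J_k$ the kernel of $M_{I_{k,2,\pm}}\circ u_{k,\pm}\circ\pi_{\pm\sqrt{d_k}C^*}(F)\circ u_{k,\pm}^*\circ M_{I_{k,\pm}}$ has modulus $\big|\wh F^{P_k}\big(s-t,\exp{(s'A_k)}\cdot(\pm\sqrt{d_k}C^*)\big)\big|$ (the intertwiner shifts the orbit parameter by $\mp t_k$, any modulation factor being unimodular and hence irrelevant to the kernel modulus). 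The true kernel at the same point is $\wh F^{P_k}\big(s-t,\exp{((s'\pm t_k)A_k)}\cdot p_k\big)$, and \eqref{compdiff} in Definition \ref{dnenull} shows that the two orbit points differ by $\tfrac{r_k (s')^2}{2}B_k^*+r_ks'C^*+\mu_kU^*+\nu_kV^*$, of norm at most $c_k:=\tfrac{r_kR_k^2}{2}+r_kR_k+r_k$. Choosing $\va\in C_c(\R)$ with $\va\geq0$ and $\val{\wh F^{P_k}(x,q)-\wh F^{P_k}(x,q')}\leq\va(x)\no{q-q'}$ (uniformly for large $k$), the kernel of each branch of $\pi_{\ell_k}(F)-\si_{k,\ol\O}(F)$ is bounded by $c_k\,\va(s-t)$ on its support; Young's inequality (Section \ref{tools}) gives each branch operator norm at most $c_k\no\va_1$, and since the branches have disjoint supports Proposition \ref{sumsofop} bounds the full difference by $c_k\no\va_1$. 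As $\lim_k r_kR_k^2=0$ we obtain $c_k\to0$, settling the case $F\in L_c^1$, and density finishes the proof.

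The main obstacle I expect is the branchwise kernel identification: one must verify that conjugating $\pi_{\pm\sqrt{d_k}C^*}(F)$ by $u_{k,\pm}$ genuinely produces a kernel operator whose orbit argument is the $\mp t_k$-shift appearing in \eqref{compdiff}, and that the compact-support and Lipschitz data for $\wh F^{P_k}$ can be chosen uniformly in $k$ despite the $k$-dependence of the basis $\B_{\mu_k,\nu_k}$ and of the polarization $P_k$. Once the orbit-point estimate \eqref{compdiff} is coupled with the localization of Lemma \ref{getsbig}, the remaining steps are routine applications of Young's inequality.
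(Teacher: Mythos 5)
Your proposal is correct and follows essentially the same route as the paper: reduction to $F\in L_c^1(G_{5,4})$ by density and uniform boundedness, localization of the kernel of $\pi_{\ell_k}(F)$ onto the two branches $I_{k,\pm}$ via Lemma \ref{getsbig} and the compact support in $s-t$, branchwise comparison of orbit points through the estimate \eqref{compdiff}, and a Lipschitz bound on $\wh F^{P_k}$ combined with Young's inequality, with $r_kR_k^2\to 0$ closing the argument. The only cosmetic difference is that you make explicit the uniform bound on $\si_{k,\ol\O}$ and the disjoint-support bookkeeping via Proposition \ref{sumsofop}, which the paper leaves implicit.
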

\begin{proof}
Let $ F\in L_c^1(G_{5,4}) $. Let for $ k\in\N^* $ \begin{eqnarray*}
\p_k:=\p_{\ell_k},\ P_k=P_{\ell_k}=\exp(\R B_k)\cdot P=\exp{(\p_k)}.
\end{eqnarray*}
The normal subgroup $ P_k=\exp({\p_k}) $ is  a polarization at $ \ell_k $ for every $ k $. Let for $ k\in\N^*: $
\begin{eqnarray*}
F_k(u,t):= \wh F^{P_k}(\exp(u A_k),\exp(t A_k)\cdot\ell_k{\res{\p_k}} ), u,t\in\R.
\end{eqnarray*}
Then the  kernel function $ K_k $ of the linear operator $ \pi_{\ell_k}(F) $ is given by: 
\begin{eqnarray*}
K_k(s,t)=F_k(s-t,t), s,t\in\R.
\end{eqnarray*}
 Since $ F\in  L_c^1(G_{5,4}) $, there is an $ M>0 $ such that $ F_k(s-t,t )=0 $, if $ \val {s-t}>M $ and together with Lemma \ref{getsbig}
we have therefore for $ k $ large enough that
\begin{eqnarray*}
\pi_{\ell_k}(F)&=&M_{I_{k,2,+}}\circ \pi_{\ell_k}(F)\circ M_{I_{k,+}}
+M_{I_{k,2,-}}\circ \pi_{\ell_k}(F)\circ M_{I_{k,-}}.
\end{eqnarray*} 
The kernel function $ F_k $ of the operator $ \pi_{\ell_k}(F)-\tilde\si_{k,\ol\O}(F) $ is given by 
\begin{eqnarray*}
F_k(s,t)&=&1_{I_{k,2,+} }(s)1_{I_{k,+}}(t)\Big (
\wh F^{P_k}(\exp((s-t)A_k),\exp ((t+t_k)A_k)\cdot \ell_k{\res{\p_k}})\\
&-&\wh F^{P_{k}}(\exp((s-t)A_k),\exp(tA_k)\cdot q_k)\Big)\\
&+&1_{I_{k,2,-} }(s)1_{I_{k,-}}(t)\Big(\wh
F^{P_{k}}(\exp((s-t)A_k),\exp((t-t_k)A_k)\cdot \ell_k{\res{\p_k}})\\
&-&\wh F^{P_{k}}(\exp((s-t)A_k),\exp(tA_k)\cdot (-q_k))\Big)
\end{eqnarray*}
Since $ F\in L_c^1(G_{5,4}) $, there exists a continuous function $ \va\geq 0 $ on $G_{5,4}/P_k $
with compact support, such that 
\begin{eqnarray*}
\vert \wh F^{P_k}(\exp(s A_k) ,\ell\res{\p_k})-\wh F^{P_k}(\exp(s A_k) ,\ell'\res{\p_k})\vert
\leq 
\va(s)\no{\ell\res{\p_k}-\ell'\res{\p_k}}, s\in G_{5,4}, \ell,\ell'\in\g_{5,4}^*.
\end{eqnarray*}
It follows for $ t=v+t_k\in I_{k,+}, s=u+t_k\in I_{k,2,+} $ that 
\begin{eqnarray*}
&& 
\vert \wh F^{P_{k}}(\exp{(u-v)}A_k),(v+t_k)\cdot \ell{_k{\res {\p_k}}})-\wh F^{P_{k}}(\exp{(u-v)}A_k),v\cdot
q_k)\vert 
\\
&&\leq
\va(u-v){\no{\exp((v+t_k)A_k)\cdot p_k-\exp(vA_k)\cdot q_k}}\\
&\leq&\left(\vert\frac{r_k v^{2}}{2}\vert +\vert  r_k v\vert+\vert\mu_k\vert
+\vert\nu_k\vert\right)\va(u-v)\\
&\leq&\left(\frac{r_k R_k^{2}}{2} +  r_k R_k+\vert\mu_k\vert
+\vert\nu_k\vert\right)\va(u-v), 
\end{eqnarray*}
for $ k $ large enough. Similarly for  $ t\in I_{k,-}, s\in I_{k,2,-} $. 
Since $ \lim_k r_k R_k^2 $=0, 
 Young's inequality implies  that  
\begin{eqnarray*}
\lim_{k\to\iy}\noop{\pi_{\ell_k}(F)-\tilde\si_{k,\ol \O}(F)}&=&0. 
\end{eqnarray*}
$  L_c^1(G_{5,4}) $ being  dense in $ C^*(G_{5,4}) $ the theorem follows.
\end{proof}
\subsubsection*{If $ d=0 $.}
We suppose now that $ \lim_k (d_k=-2\be_k r_k)=0 $. 
\begin{definition}\label{dnenull}
\rm   Let $ \ol\O=(\O_{\ell_k=\be_k B_{k}^*+\mu_k U^*+\nu_k V^*}) $ be a
properly converging sequence in $ \GA_2^{5,4} $. We can suppose that $ \lim_k \be_k=\be_\iy $
exists 
in $ [-\iy,+\iy[ $ and that $ \lim_k \tilde \mu_k=\tilde\mu $, $ \lim_k
\tilde\nu_k=\tilde\nu $. Let as before $ A_k:=\tilde\mu_k A+\tilde\nu_k B $,  $
B_k:=-\tilde \nu_k A+\tilde\mu_k B $ and $ A_\ell=\tilde\mu A+\tilde\nu B $, $
B_\ell=-\tilde \nu A+\tilde\mu B $. The limit set is
given by $L(\ol\O)=\R A_\ell^*+ [\be_\iy,\iy[ B_\ell^*$ if $ \be_\iy\in\R $ otherwise $ L(\ol O)=\GA_0^1 $.
\begin{enumerate}\label{}
\item 
Let  $ (\ve_k)_k\subset \R_+ $ be a decreasing sequence converging to $0$ such
that
$ \lim_k\frac{\ve_k}{r_k}=+\iy $, 

\item  let
\begin{eqnarray*}
t^{k}_j:=j\sqrt{\frac{2 \ve_k}{r_k}}, k\in\N,j\in\Z.
\end{eqnarray*}
 \end{enumerate}

Then:
\begin{eqnarray*}
&&\exp{(( t^{k}_j+s) A_k)}\cdot p_k\\&=&\left(\be_k
+r_k\frac{(t^{k}_j+s)^{2}}{2}\right)B_k^*+(r_k(t^{k}_j+s))C^*+\mu_k U^*+\nu_k V^*\\
\nn &=&\left(\be_k +j^2\ve_k+r_kj\sqrt{\frac{2
\ve_k}{r_k}}s+r_k\frac{s^{2}}{2}\right)B_k^*+\left(r_kj\sqrt{\frac{2
\ve_k}{r_k}}-r_ks\right)C^*+\mu_k U^*+\nu_k V^*\\
\nn &=&\left(\be_k +j^2\ve_k+j\sqrt{{2
\ve_k}{r_k}}s+r_k\frac{s^{2}}{2}\right)B_k^*+\left(j\sqrt{{2
\ve_k}{r_k}}+r_ks\right)C^*+\mu_k
U^*+\nu_k V^*.
\end{eqnarray*}
Let
\begin{eqnarray}\label{tjkpkdef}
\nn p^k_j&:=&\exp{(t^{k}_jA_k)}\cdot p_k=(\be_k +j^2\ve_k)B_k^*+(j\sqrt{{2
\ve_k}{r_k}})C^*+\mu_k U^*+\nu_k V^*,\\
\\
\nn q^{k}_j&:=&(\be_k +j^2\ve_k)B_k^*+j\sqrt{{2 \ve_k}{r_k}}C^*.
\end{eqnarray}
Let  $ s\in [t_{j}^k,t^k_{j+1}[, j\in \Z $. Then for $ k $ large enough:
\begin{eqnarray}\label{diffest2}
\nn\no{\exp{(( t^{k}_j+s) A_k)}\cdot p_k-\exp{(sA_k)}\cdot
q^k_j}&=&\val{r_k\frac{s^{2}}{2}}+\val{r_ks}+\val{\mu_k}+\val{\nu_k}\\
\nn
&\leq&\val{r_k\frac{(t^k_{j+1}-t^k_{j})^{2}}{2}}+\val{r_k(t^k_{j+1}-t^k_{j})}
+\val{\mu_k}+\val{\nu_k}\\
\nn &\leq& \ve_k+\sqrt{2r_k\ve_k}+\val{\mu_k}+\val{\nu_k}\\
&<&\ve_k^{\frac1 2}.
\end{eqnarray}
 \end{definition}
\begin{definition}\label{rkjdef}
Let for $ k\in\N$ and $j\in\Z $:
\rm   \begin{eqnarray*}
I_{k,j}&:=&[t_j^k,t_{j+1}^k[,\\
\nn I_k&:=&\bigcup_{j\in J_k}I_{k,j}.
\end{eqnarray*}
 \end{definition}
\begin{lemma}\label{getsbig2}$  $
 For all  $ R>0,j\in \Z $, we have that $R +I_{k,j}\subset
I_{k,j}\cup I_{k,j+1} $, for $ k $ large enough.
 \end{lemma}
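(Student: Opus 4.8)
The plan is to reduce everything to a statement about the common length of the intervals $I_{k,j}$. First I would record that, by the definition $t_j^k=j\sqrt{2\ve_k/r_k}$, each interval $I_{k,j}=[t_j^k,t_{j+1}^k[$ has the same length $L_k:=t_{j+1}^k-t_j^k=\sqrt{2\ve_k/r_k}$, independent of $j$, so that $t_j^k=jL_k$ and $I_{k,j}=[jL_k,(j+1)L_k[$. The crucial input is the choice of the sequence $(\ve_k)_k$, for which $\lim_k \ve_k/r_k=+\iy$; this gives $\lim_k L_k=+\iy$, and in particular, for any fixed $R>0$ there is a $k_0$ (depending on $R$ but not on $j$) with $L_k\geq R$ for all $k\geq k_0$.

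Next I would simply translate and compare endpoints. The translated interval is $R+I_{k,j}=[R+jL_k,\,R+(j+1)L_k[$, while the target set is $I_{k,j}\cup I_{k,j+1}=[jL_k,\,(j+2)L_k[$. For the left endpoint, $R>0$ forces $R+jL_k>jL_k$, so the lower bound of the target is respected. For the right endpoint, since $R\leq L_k$ for $k\geq k_0$ we have $R+(j+1)L_k\leq (j+2)L_k$, so every point of $R+I_{k,j}$ is strictly below $(j+2)L_k$ and hence lies in the target. This yields $R+I_{k,j}\subset I_{k,j}\cup I_{k,j+1}$ for all $k\geq k_0$ and all $j\in\Z$ simultaneously.

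There is essentially no real obstacle here: the only point requiring a little care is that the threshold $L_k\geq R$ is uniform in $j$, which it is precisely because $L_k$ does not depend on $j$, so the single index $k_0$ works for every $j\in\Z$ at once. The half-open nature of the intervals causes no difficulty either, since the right endpoint of $R+I_{k,j}$ is not attained and the comparison $R+(j+1)L_k\leq(j+2)L_k$ keeps the whole translated interval inside the half-open target $[jL_k,(j+2)L_k[$.
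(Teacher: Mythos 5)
Your proof is correct and follows the same route as the paper, whose entire argument is the one-line observation that the common interval length $t^k_{j+1}-t^k_j=\sqrt{2\ve_k/r_k}$ tends to infinity; you have simply written out the endpoint comparison and the (correct) remark that the threshold $k_0$ is uniform in $j$ because the length $L_k$ does not depend on $j$.
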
 
\begin{proof} 
This  follows from the fact that $ \lim_k(t^{k}_{j+1}- t^{k}_j)=\sqrt{\frac{2\ve_k}{r_k}}=\iy. $
\end{proof}
\begin{definition}\label{defmksik}
\rm  
\begin{enumerate}\label{}
$  $

\item  
Let for $ k\in \N $ and $j\in\Z$ 
\begin{eqnarray*}
\si_{k,j}:=\ind{P}{G_{5,4}}\ch_{\exp(\pm t_k A_k)\cdot q^k_j}
\end{eqnarray*}
and let $ u_{k,j} $ be the unitary intertwining operator between $ \pi_{-j\sqrt {2\ve_k r_k}C^*} $ and $ \si_{k,j} $. 

\item Let as before $ C_{\ol \O} $ be the $ C^* $-algebra of all continuous bounded  
mappings from $ L(\ol \O) $ into $\B(L^2(\R))  $.  Let for $ \ph\in C_{\ol \O} $:
\begin{eqnarray}\label{deftilsi2G54}
\tilde\si_{k,\ol \O}(\ph)=\sum_{j\in \Z} (M_{I_{k,j+1}}+M_{I_{k,j}})\circ
u_{k,j}\circ \si_{k,j}(\ph)\circ u_{k,j}^*\circ M_{I_{k,j}}.
\end{eqnarray}
\item  For $ a\in C^*(G_{5,4}) $ let 
\begin{eqnarray*}
\si_{k,\ol\O}(a)&=&\tilde\si_{k,\ol \O}(\wh a\res{L(\ol \O)}).
\end{eqnarray*}
 \end{enumerate}
 \end{definition}
 The proof of the next proposition is similar to that of Proposition \ref{sumsofop}.
 \begin{proposition}\label{boundedsi3}
The linear mappings $ \tilde\si_{k,\ol\O}, k\in\N, $ are bounded by $ 2 $.
 \end{proposition}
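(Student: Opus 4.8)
The plan is to recognize $\tilde\si_{k,\ol\O}$ as an operator of exactly the type estimated in Proposition \ref{sumsofop}, taken with $N=2$. First I would record the two ingredients that make each summand manageable: the operators $u_{k,j}$ are unitary and $\si_{k,j}(\ph)$ is the value at $\ph$ of a $*$-representation of $C_{\ol\O}$, so that the conjugated operators $\si_j:=u_{k,j}\circ\si_{k,j}(\ph)\circ u_{k,j}^*$ satisfy $\noop{\si_j}\leq\no{\ph}_{L(\ol\O)}$ uniformly in $j\in\Z$; and the intervals $I_{k,j}=[t^k_j,t^k_{j+1}[$ are pairwise disjoint, so the multiplication operators $M_{I_{k,j}}$ are mutually orthogonal.

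Next I would match the definition (\ref{deftilsi2G54}) to the template of Proposition \ref{sumsofop}. Taking the index set $\Z$, the source sets $S_j:=I_{k,j}$, and the two families of target sets $T_{j,1}:=I_{k,j}$ and $T_{j,2}:=I_{k,j+1}$, the identity $M_{I_{k,j+1}}+M_{I_{k,j}}=M_{T_{j,1}}+M_{T_{j,2}}$ gives
\begin{eqnarray*}
\tilde\si_{k,\ol\O}(\ph)=\sum_{m=1}^{2}\sum_{j\in\Z}M_{T_{j,m}}\circ\si_j\circ M_{S_j}.
\end{eqnarray*}
The disjointness hypotheses then hold: $S_j\cap S_{j'}=\es$, $T_{j,1}\cap T_{j',1}=\es$, and $T_{j,2}\cap T_{j',2}=\es$ for $j\ne j'$, the last of these being the disjointness of the $I_{k,j}$ after the shift $j\mapsto j+1$. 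With $C=\no{\ph}_{L(\ol\O)}$ and $N=2$, Proposition \ref{sumsofop} yields $\noop{\tilde\si_{k,\ol\O}(\ph)}\leq 2\no{\ph}_{L(\ol\O)}$, which is the assertion.

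The case $d\ne 0$ of the definition (\ref{deftilsi1G54}) is handled the same way and is in fact sharper: it consists of only the two summands indexed by $\pm$, whose source sets $I_{k,+},I_{k,-}$ are disjoint and whose target sets $I_{k,2,+},I_{k,2,-}$ are disjoint for $k$ large (since $R_k/t_k\to 0$), so Proposition \ref{sumsofop} applies with $N=1$ and already gives the bound $\no{\ph}_{L(\ol\O)}$. I expect no genuine obstacle; the only delicate point is the combinatorial bookkeeping that rewrites the overlapping target factor $M_{I_{k,j+1}}+M_{I_{k,j}}$ as the two disjoint target families $T_{j,1},T_{j,2}$, so that the disjointness hypotheses of Proposition \ref{sumsofop} are met exactly rather than only approximately.
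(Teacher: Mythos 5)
Your proposal is correct and is exactly the argument the paper intends: its proof consists of the single remark that the statement follows as in Proposition \ref{sumsofop}, and your instantiation with $N=2$, $S_j=I_{k,j}$, $T_{j,1}=I_{k,j}$, $T_{j,2}=I_{k,j+1}$, together with the uniform bound $\noop{u_{k,j}\circ\si_{k,j}(\ph)\circ u_{k,j}^*}\leq\no{\ph}_{L(\ol\O)}$, is precisely the required bookkeeping. Nothing further is needed.
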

 \begin{theorem}\label{condigen2G54}
Let $ \ol\O=(\O_{\ell_k=\be_k B_{k}^*+\mu_k U^*+\nu_k V^*})_k $ be  a
properly converging sequence in $ \GA_2^{5,4} $ with the properties of Definition
\ref{dnenull}.
Then for every $ a\in C^*(G_{5,4}) $, we have that 
\begin{eqnarray*}
\lim_{k\to\iy}\noop{\pi_{\ell_k}(a)-\si_{k,\ol\O}(a)}=0.
\end{eqnarray*}
 \end{theorem}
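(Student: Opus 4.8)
The plan is to follow the template of Theorem \ref{condigenG54} (the case $d\neq0$), the essential new feature being that here the limit set is approached through infinitely many ``windows'' $I_{k,j}$, $j\in\Z$, rather than the two windows $I_{k,\pm}$, so that the combinatorics of the summation become the delicate point. First I would reduce to $F\in L_c^1(G_{5,4})$: since $L_c^1$ is dense in $C^*(G_{5,4})$, the operators $\pi_{\ell_k}$ are contractive, and the maps $\tilde\si_{k,\ol\O}$ are uniformly bounded (by $2$, Proposition \ref{boundedsi3}), a $3\ve$-argument reduces the claim to $\lim_k\noop{\pi_{\ell_k}(F)-\tilde\si_{k,\ol\O}(F)}=0$ for such $F$. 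For $F\in L_c^1$ the function $\wh F^{P_k}(\exp{(s A_k)},\cdot)$ is supported in $\val s\le M$ for some $M$ independent of $k$ and is Schwartz in the momentum variable $q\in\p_k^*$, so there is a fixed compactly supported $\va\ge0$ with $\val{\wh F^{P_k}(\exp{(s A_k)},q)-\wh F^{P_k}(\exp{(s A_k)},q')}\le\va(s)\no{q-q'}$, uniformly in $k$.

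Next I would set up the block decomposition. The kernel of $\pi_{\ell_k}(F)$ is $K_k(s,t)=\wh F^{P_k}(\exp{((s-t)A_k)},\exp{(t A_k)}\cdot\ell_k\res{\p_k})$, which vanishes unless $\val{s-t}\le M$. By Lemma \ref{getsbig2} a bounded shift moves each $I_{k,j}$ into $I_{k,j}\cup I_{k,j+1}$ once $k$ is large, so for large $k$ one has $\pi_{\ell_k}(F)=\sum_{j}(M_{I_{k,j+1}}+M_{I_{k,j}})\circ\pi_{\ell_k}(F)\circ M_{I_{k,j}}$, which matches exactly the shape of $\tilde\si_{k,\ol\O}$ in $(\ref{deftilsi2G54})$. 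Consequently the difference $\pi_{\ell_k}(F)-\tilde\si_{k,\ol\O}(F)$ splits as a sum over $j$ of blocks, the $j$-th block being supported in the column $I_{k,j}$ and in the rows $I_{k,j}\cup I_{k,j+1}$, its kernel being the difference of $\wh F^{P_k}$ evaluated at the true momentum $\exp{((t_j^k+t)A_k)}\cdot\ell_k\res{\p_k}$ and at the frozen base point $\exp{(t A_k)}\cdot q_j^k$ produced by $u_{k,j}\circ\si_{k,j}(\wh F\res{L(\ol\O)})\circ u_{k,j}^*$.

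Then comes the per-block estimate, where the choice of scales pays off: on the $j$-th block the two momenta differ by exactly the quantity bounded in $(\ref{diffest2})$, namely $\no{\exp{((t_j^k+s)A_k)}\cdot p_k-\exp{(s A_k)}\cdot q_j^k}<\ve_k^{1/2}$, and crucially this bound is \emph{uniform in} $j$, since the displacement within a window depends only on the local position and not on which window one is in. Hence each block kernel is dominated by $\ve_k^{1/2}\va(s-t)$, and Young's inequality (the kernel estimate of the Preliminaries) bounds each block in operator norm by $\ve_k^{1/2}\no\va_1$, again uniformly in $j$. Finally I would reassemble: the column supports $I_{k,j}$ are pairwise disjoint and the row supports overlap at most two-fold, so Proposition \ref{sumsofop} (with $N=2$) yields $\noop{\pi_{\ell_k}(F)-\tilde\si_{k,\ol\O}(F)}\le 2\,\ve_k^{1/2}\no\va_1\to0$.

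The main obstacle, and the reason this case needs its own argument rather than a direct appeal to Theorem \ref{condigenG54}, is the passage from infinitely many individually small blocks to a single globally small operator. In the case $d\neq0$ there are only two windows and the triangle inequality suffices, whereas here the base points $q_j^k$ range over the whole non-compact limit set, so one needs \emph{both} a uniform-in-$j$ bound $\ve_k^{1/2}$ on each block, guaranteed precisely by the scale $t_j^k=j\sqrt{2\ve_k/r_k}$ together with $\ve_k/r_k\to\iy$ which renders $(\ref{diffest2})$ $j$-independent, \emph{and} the disjoint-support orthogonality of Proposition \ref{sumsofop} to keep the infinite sum from blowing up. These two ingredients together are exactly what underlies Proposition \ref{boundedsi3}, and verifying their uniformity is where the real work of the proof lies.
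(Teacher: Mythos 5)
Your proposal is correct and follows essentially the same route as the paper's proof: reduction to $F\in L_c^1(G_{5,4})$ by density and the uniform bound of Proposition \ref{boundedsi3}, the block decomposition via Lemma \ref{getsbig2}, the $j$-uniform per-block kernel estimate from (\ref{diffest2}) combined with the Lipschitz bound and Young's inequality, and the two-fold-overlap summation in the spirit of Proposition \ref{sumsofop}. Your write-up merely makes explicit (the appeal to Proposition \ref{sumsofop} with $N=2$) what the paper compresses into ``the properties of the sequence $(I_{k,j})_j$''.
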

\begin{proof}
Let $ F\in L_c^1(G_{5,4}) $. Then, 
for $ k $ large enough, we have by Lemma \ref{getsbig2} that
\begin{eqnarray*}
\pi_{\ell_k}(F)=\sum_{j\in \Z} (M_{I_{k,j}}+M_{I_{k,j+1}})\circ
\pi_{k}(F)\circ M_{I_{k,j}}.
\end{eqnarray*}
Therefore the kernel function $ F_{k,j} $ of the operator $(M_{I_{k,j}}+M_{I_{k,j+1}})\circ(
\pi_{k}(F)-\tilde\si_{k,j}(F))\circ M_{I_{k,j}}  $ is given by:
\begin{eqnarray*}
 F_{k,j}(s,t)&=& 1_{ I_{k,j}\cup I_{k,j+1}}(u+t^k_j) 
(\wh F^{P_{k}}(u-v,(v+t_j^k)\cdot p_k)-\wh F^{P_{k}}(s-t,v\cdot
q_j^k)),\\
&& \text{ with } s=u+t^k_j, t=v+t^k_j.
\end{eqnarray*}
Since $ F\in L_c^1(G_{5,4}) $ there exists a continuous function $ \va\geq 0 $ on $ \R $
with compact support, such that 
\[\vert \wh F^{P_{k}}(s-t,p)-\wh F^{P_{k}}(s-t,q)\vert
 \leq \va(s-t)\no{p-q}  \]
for every $ k\in\N $ and every $ p,q\in \p_k^* $.
Hence, by (\ref{diffest2})
\begin{eqnarray*}
\val{F_{k,j}(s,t)}&\leq & 1_{ I_{k,j}\cup I_{k,j+1}}(u+t^k_j)1_{I_{k,j}}(v+t^k_j) \\  
&&\ti \vert\wh F^{P_{k}}(u-v,(v+t_j^k)\cdot p_{k})-\wh F^{P_{k}}(s-t,v\cdot
q_j^k)\vert\\
\nn &\leq&\va(u-v)\no{(v+t_j^k)\cdot p_k-v\cdot {q_j^k}}\\
\nn
&\leq&\ve_k^{\frac1 2}
\va(u-v).
\end{eqnarray*}
It follows now from Young's inequality and from the properties of the sequence $
(I_{k,j})_j $ that
\begin{eqnarray}\label{goeszero5}
\noop{\pi_{\ell_k}(F)-\tilde\si_{k,\ol\O}(F)}\leq 2\ve_k^{\frac1 2}\no\va_1.
\end{eqnarray}
Since $  L_c^1(G_{5,4}) $ is dense in $ C^*(G_{5,4}) $ and since the mappings $ \si_{k,\ol\O} $
are all bounded in $ k $ by a fixed constant, it follows that
 relation (\ref{goeszero5}) also holds for $ a\in  C^*(G_{5,4})$.
\end{proof}

\section{ The $ C^* $-algebra of the group $G_{5,6}$.}
Recall   that the Lie algebra 
$\g_{5,6}$ is  spanned by the basis $\B=\{A,B,C,U,V\}$ equipped 
with the Lie brackets $$[A,B]= C, [A,C]= U,  [A,U]= V, [B,C]=V.$$
It has  a one-dimensional centre $\z=\R V$.
 The group $G_{5,6}=\exp(\g_{5,6})$ can be realized as $ \R^{5} $ with the
multiplication 
\begin{eqnarray}\label{cdotg56} 
\nn& & (a,b,c,u,v)\cdot(a',b',c',u',v')\\
\nn&=&(a+a',b+b',c+c'-a'b,u+u'-a'c+\frac{a'^2b}{2},v+v'-a'u+\frac{bc'}{2}-\frac{b'c}{2}+\frac{a'bb'}{2}+\frac{a'^2c}{2}-\frac{a'^3b}{6}).
\end{eqnarray}
We use the euclidean scalar product on $ \g_{5,6} $  to
identify $ \g_{5,6}^* $ with $ \g_{5,6}=\R^5 $ and we we obtain the
following expression for $ \Ad^*(a,b,c,u,v) $:
\begin{eqnarray*}\label{Ad*g56}
\nn& &\Ad^*((a,b,c,u,v))(\al,\be,\rho,\mu,\nu)\\
\nn&=&(\al- \rho b-
\mu c-\mu\frac{ab}{2}-\nu u-\nu\frac{
b^2}{2}-\nu\frac{ac}{2}-\nu\frac{a^2b}{6},
\be+\rho a+\mu\frac{a^2}{2}-\nu c+\nu\frac{ab}{2}+\nu\frac{a^3}{6},\\ 
\nn& &\rho+\mu a+\nu b+\nu\frac{a^2}{2},\mu+\nu a,\nu).
\end{eqnarray*}
We give now a description of the co-adjoint orbits:
\begin{enumerate}
 \item[] The generic orbits: if $\nu\ne0$. The orbit $ \O_{\nu} $  of the
element $\ell_{\nu}=(0,0,0,0,\nu) $ is given by:
\begin{eqnarray}\label{genorbitg56}
\nn \O_{\nu}&=&\{(a,b,c,u,\nu),\  a,b,c,u \in\R \}. 
\end{eqnarray}
 The stabilizer of $\ell_{\nu}$ is the set 
$\g_{5,6}(\ell_{\nu})=span\{
V\},$ 
we denote by $ \GA_3^{5,6} $ the orbit space of this layer and we parametrize it by
\begin{eqnarray}\label{defGA3G56}
\GA_3^{5,6}:=\{\ell_{\nu}\equiv\nu, \nu\in\R^*\}.
\end{eqnarray}
\end{enumerate}
Since $G_{5,6}/V=F_4$ we can decompose the orbit $\g_{5,6}^*/G_{5,6}$ and hence also the dual space $\wh G_{5,6},$ into the disjoint union
$$\g_{5,6}^*/G_{5,6}=\GA_3^{5,6}\dot\cup\GA_2^{4}\dot\cup\GA_1^4\dot\cup\GA_0^4.$$
\begin{theorem}\label{topodualG56}
 Let $\ol\O=(\O_{\nu_k})_k\subset\GA_3^{5,6}$ be a sequence, such that $\underset{k\to\iy}{\lim}\nu_k=0.$ Then $\ol\O$ 
is properly converging and  $L(\ol\O)=\GA_2^{4}\cup\GA_1^4\cup\GA_0^{4}.$
\end{theorem}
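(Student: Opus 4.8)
The plan is to read everything off directly from the explicit shape of the generic orbits together with the fact that $\nu=\ell(V)$ is a continuous $G_{5,6}$-invariant. Recall that for $\nu_k\neq 0$ the orbit $\O_{\nu_k}$ is the whole affine hyperplane $\{\ell\in\g_{5,6}^*:\ell(V)=\nu_k\}=\{(a,b,c,u,\nu_k):a,b,c,u\in\R\}$, because the co-adjoint map based at $\ell_{\nu_k}=(0,0,0,0,\nu_k)$ sweeps out all four remaining coordinates while fixing the central value $\nu_k$. Since $V$ is central, the map $\ell\mapsto\ell(V)$ is constant on orbits and continuous, and the hyperplane $\{\ell(V)=0\}$ is exactly $\f_4^*$ under the identification $G_{5,6}/\exp(\R V)=F_4$; its orbit space is $\f_4^*/F_4=\GA_2^4\cup\GA_1^4\cup\GA_0^4$.

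First I would check that $\ol\O$ does not go to infinity in the sense of Definition \ref{conin}: the orbit $\O_{\nu_k}$ contains the point $\ell_{\nu_k}=(0,0,0,0,\nu_k)$, which tends to $0$, so every compact neighbourhood $K$ of $0$ meets $\O_{\nu_k}$ for all large $k$, and hence $\ol\O$ has a nonempty limit set. For the inclusion $L(\ol\O)\subset\GA_2^4\cup\GA_1^4\cup\GA_0^4$ I would note that any limit point $m$ of a sequence $m_k\in\O_{\nu_k}$ satisfies $m(V)=\lim_k\nu_k=0$, so $m$ lies in $\f_4^*$ and its orbit is one of the three lower sub-layers. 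For the reverse inclusion I would produce approximating sequences by hand: given any $\ell=(\al,\be,\rho,\mu,0)\in\f_4^*$, the points $m_k:=(\al,\be,\rho,\mu,\nu_k)$ lie in $\O_{\nu_k}$ (the full hyperplane) and converge to $\ell$, so the orbit through $\ell$ belongs to $L(\ol\O)$. Letting $\ell$ range over all of $\f_4^*$ gives $\GA_2^4\cup\GA_1^4\cup\GA_0^4\subset L(\ol\O)$, and the two inclusions yield the claimed equality. Since this computation is unaffected by passing to a subsequence, every subsequence has the same limit set; together with the non-escape this is exactly what proper convergence onto the closed stratum $\GA_2^4\cup\GA_1^4\cup\GA_0^4=\wh G_{5,6}\setminus\GA_3^{5,6}$ requires.

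The one point that deserves care, and which I would flag as the conceptual heart of the statement, is that a single four-dimensional orbit $\O_{\nu_k}$ collapses onto the \emph{entire} lower stratum rather than onto a single limit orbit: because $\O_{\nu_k}$ is the full hyperplane $\{\ell(V)=\nu_k\}$, its flattening as $\nu_k\to0$ covers every $F_4$-orbit of every type simultaneously, which is why all three pieces $\GA_2^4$, $\GA_1^4$ and $\GA_0^4$ appear at once in $L(\ol\O)$. Thus this is genuinely a layer-crossing and not a multiplicity-one convergence to a single orbit in the sense of the convergence notion recalled among the preliminaries; the finer, quantitative control needed to build the norm-controlling maps $\si_{\ol\O,k}$ across this collapse is a separate matter, carried out in the subsequent changing-of-layers construction and not required for the present purely topological statement.
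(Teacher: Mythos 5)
Your proof is correct. The paper states Theorem \ref{topodualG56} without proof (just as it declares the analogous statement for $G_{5,3}$ to be straightforward), and your argument is exactly the intended one: since each generic orbit $\O_{\nu_k}$ is the full affine hyperplane $\{\ell\in\g_{5,6}^*:\ell(V)=\nu_k\}$ and the quotient map $\g_{5,6}^*\to\g_{5,6}^*/G_{5,6}$ is open, the two elementary inclusions you verify (every limit functional kills $V$, and every $\ell\in\f_4^*$ is the limit of $(\al,\be,\rho,\mu,\nu_k)\in\O_{\nu_k}$) pin down the limit set, and the subsequence-independence of this computation gives proper convergence.
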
 
\subsection{The Fourier transform.}
\begin{definition}
 Let $F\in L^1(G_{5,6}),$ the operator $\pi_{\ell}$ is a kernel operator with kernel function 
$$\widehat F^{P_{\ell}}(s,t,\ell|_{P_\ell})=\int_{P_\ell}F(spt\inv)\chi_\ell(p)dp,\ s,t\in G_{5,6}/P_{\ell},$$
where $P_\ell=\exp(\p_\ell)$ and $\p_\ell$ is a polarization at $\ell.$
\end{definition}
 For $\ell=(0,0,0,0,\nu)\in\GA_3^{5,6},$ therefore the abelian sub-algebra $\p=\text{span}\{C,U,V\}$ is a polarization at $\ell.$ 
We realize then $ \pi_{\ell,P}=\pi_{\ell} $ as $ \pi_{\ell}:=\ind P {G_{5,6}}\ch_\ell$. 
The Hilbert space $ \l2{G_{5,6}/P,\ell} $ is in fact isomorphic to $ \l2{\R^{2}} $:
let $ E:\R^{2}\to G_{5,6}, E(a,b):=\exp{(a A)}\exp{(b B)} $ and 
$S=\exp(\R  A)\exp(\R B)=E(\R\ti \R)$.  Then 
$G=S.P$ as topological product and the mapping $ U:\l2{G_{5,6}/P,\ell}\to \l2{\R^{2}}
$  defined by $ U\xi(t):=\xi(E(t)),t\in\R^{2}, $ is unitary. We identify now $
\pi_{\ell} $ 
with the corresponding representation on $ \l2{\R^{2}} $.
Let us compute the  operator $\pi_{\ell}(F)$ for $F\in
C^*(G_{5,6})$ explicitly.
For  $\xi\in L^2(\R^2)$, $t=(a',b')\in S,p\in P$ we have:
\begin{eqnarray}
\nn& &\pi_\ell(F)\xi(t)\\
\nn&=&\int_{G_{5,6}/P}\xi(s)\left(\int_PF(t ps\inv)e^{-2i\pi\langle s.\ell,p\rangle}dp\right)ds\ 
(\text{where }s.\ell=\Ad^*(s).\ell)\\
\nn&=&\int_{G_{5,6}/P}\widehat F^P(ts\inv, s.\ell|_\p)\xi(s)ds\\
\nn&=&\int_{G_{5,6}/P}\widehat F^P(a'-a,b'-b,(a,b).p)
e^{-2i\pi(\nu ab(b'-b)+\nu\frac{ a(b'-b)^2}{2}+\frac{\nu a^3(b'-b)}{6})}\xi(a,b)dadb.\\
\end{eqnarray}
\begin{definition}
 Let as before $P=\exp(\p)$ and $\pi_0=\ind P{G_{5,6}}\chi_0$ be the left regular representation of $G_{5,6}$ on the Hilbert space 
$L^2(G_{5,6}/P)\simeq L^2(\R^2).$ Then the image $\pi_0(C^*(G_{5,6}))$ is just the $C^*-$algebra of $\R^2$ considered as an algebra 
of convolution operators on $L^2(\R^2)$ and $\pi_0(C^*(G_{5,6}))$ is isomorphic to the algebra $C_0(\R^2)$ of continuous functions vanishing at 
infinity on $\R^2$ via the abelian Fourier transform 
$$\widehat F(a,b):=\int_{G_{5,6}}F(g)e^{-2i\pi\ell_{a,b}(\log g)}dg,\ a,b\in\R, F\in L^1(G_{5,6}).$$
\end{definition}
\begin{definition}\rm
 The Fourier transform $\hat a=\F(a)$ of an element
$a\in C^*(G_{5,6})$ is defined as to 
 the field of bounded linear operators  over the dual space of $ G_{5,6} $, but
 where we put  all the unitary characters of $ G_{5,6}$ together to form 
 the representation $ \pi_0 $.   
This gives us the set $ \GA_0^{4}\cup\GA_1^4\cup\GA_2^{4}\cup\GA_3^{5,6} $ and we define for
 $ a \in C^*(G_{5,6}) $ the operator field:
\begin{eqnarray}\label{he7}
\nn&&\hat
 a(\nu)=\F(a)(\nu):=\pi_{\nu}(a)\in\K(\l2{\R^{2}}),\ \nu\in\GA_3^{5,6};
\\
\nn &&\hat a(\be,\mu)=\F(a)(\be,\mu):=\pi_{\be,\mu}(a)
\in\K(\l2\R),\ (\be,\mu)\in\GA_2^{4};\\
\nn &&\hat a(\rh)=\F(a)(\rh):=\pi_{\rh}(a)
\in\K(\l2\R),\ \rh\in\GA_1^4;\\
\nn&&\hat a(0)=\F(a)(0):=\pi_0(a)\in C^*(\R^2)\subset \B(\l2{\R^2}).
\end{eqnarray}
\end{definition}
\begin{theorem}\label{normcontG56}
 The mapping $\GA_3^{5,6}\mapsto\B(L^2(\R^2)):\ell\to\pi_\ell(F)$  is norm-continuous  for all $F\in C^*(G_{5,6}).$
\end{theorem}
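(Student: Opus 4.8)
The plan is to reduce, by a density argument, to $F\in L_c^1(G_{5,6})$ and then estimate the kernel of $\pi_\nu(F)-\pi_{\nu'}(F)$ directly, bounding its operator norm by the Young-type kernel estimate recalled in Section~\ref{tools}. Since $\noop{\pi_\nu(F)}\le\no F$ for every $\nu$ and $L_c^1(G_{5,6})$ is dense in $C^*(G_{5,6})$, a routine $\ve/3$ argument reduces the statement to the norm continuity of $\nu\mapsto\pi_\nu(F)$ for $F\in L_c^1(G_{5,6})$. I fix a base point $\nu_0\in\R^*$ and restrict $\nu,\nu'$ to a compact interval $I\subset\R^*$ containing $\nu_0$ and bounded away from $0$, so that $|\nu|\ge c>0$ on $I$.

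For such an $F$ the operator $\pi_\nu(F)$ has kernel
$$K_\nu\big((a',b'),(a,b)\big)=\wh F^P\big(a'-a,\,b'-b,\,q_\nu(a,b)\big)\,e^{-2i\pi\Phi_\nu(a,b,b')},$$
where, by the coadjoint action formula, $q_\nu(a,b):=(a,b)\cdot{\ell_\nu}\res\p=(\nu b+\nu\frac{a^2}{2},\,\nu a,\,\nu)\in\p^*$ and $\Phi_\nu(a,b,b')=\nu ab(b'-b)+\nu\frac{a(b'-b)^2}{2}+\frac{\nu a^3(b'-b)}{6}$. Abbreviate the difference variable by $s=(a'-a,b'-b)$. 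The \emph{decisive} support fact is this: as $F\in L_c^1$, the function $\wh F^P$ is compactly supported in its $\p^*$-variable, say inside a ball of radius $M$; since $q_\nu(a,b)$ has components $\nu a$ and $\nu(b+a^2/2)$ with $|\nu|\ge c$, the kernel vanishes unless $(a,b)$ lies in one fixed compact set $Q\subset\R^2$ depending only on $M$ and $c$. Likewise the compact support of $\wh F^P$ in the $G_{5,6}/P$-variable confines $s$ to a fixed compact set.

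The key step is the pointwise bound of $K_\nu-K_{\nu'}$. Writing
$$K_\nu-K_{\nu'}=\big[\wh F^P(s,q_\nu)-\wh F^P(s,q_{\nu'})\big]e^{-2i\pi\Phi_\nu}+\wh F^P(s,q_{\nu'})\big[e^{-2i\pi\Phi_\nu}-e^{-2i\pi\Phi_{\nu'}}\big],$$
I would estimate the first bracket by the Lipschitz bound $|\wh F^P(s,q)-\wh F^P(s,q')|\le\va(s)\no{q-q'}$ (with $\va\in C_c$, as in the proof of Proposition~\ref{progencondG53}) together with $\no{q_\nu-q_{\nu'}}=|\nu-\nu'|\,\no{(b+a^2/2,\,a,\,1)}\le C_1|\nu-\nu'|$ on $Q$, and the second bracket by $|e^{-2i\pi\Phi_\nu}-e^{-2i\pi\Phi_{\nu'}}|\le2\pi|\Phi_\nu-\Phi_{\nu'}|=2\pi|\nu-\nu'|\,|ab(b'-b)+\frac{a(b'-b)^2}{2}+\frac{a^3(b'-b)}{6}|\le C_2|\nu-\nu'|$ on $Q$, the factor $|\wh F^P(s,q_{\nu'})|$ being dominated by some $\tilde\va(s)\in C_c$. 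The finiteness and $\nu$-uniformity of $C_1,C_2$ is exactly what the compactness of $Q$ delivers, and this uniform boundedness — forced by the polynomial growth of both the phase and the coadjoint action in $(a,b)$ — is the main obstacle; it is resolved only because the $\p^*$-support of $\wh F^P$ keeps $(a,b)$ bounded. Combining the two pieces yields $|K_\nu-K_{\nu'}|\le|\nu-\nu'|\,\Psi(a'-a,b'-b)$ for a single $\Psi\in C_c(\R^2)$ independent of $\nu,\nu'\in I$, whence Young's inequality gives $\noop{\pi_\nu(F)-\pi_{\nu'}(F)}\le|\nu-\nu'|\,\no\Psi_1\to0$ as $\nu'\to\nu$. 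The scheme is entirely parallel to the norm-continuity assertions already used for $G_{5,2}$ and $G_{5,3}$.
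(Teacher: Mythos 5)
Your proof is correct. The paper in fact states Theorem~\ref{normcontG56} without any proof (as it does for the analogous continuity assertions for $G_{5,2}$, $G_{5,3}$ and $G_{5,4}$), and your argument --- density of $L^1_c(G_{5,6})$ plus the uniform bound $\noop{\pi_\nu(F)}\le\no{F}$, the observation that the compact $\p^*$-support of $\wh F^{P}$ together with $\val{\nu}\geq c>0$ confines $(a,b)$ to a fixed compact set, the Lipschitz estimate on $\wh F^{P}$ in the $\p^*$-variable, the elementary bound on the phase difference, and finally Young's inequality --- is exactly the technique the paper deploys in its explicit estimates elsewhere (e.g.\ Proposition~\ref{progencondG53} and Theorem~\ref{condigenG54}), so it supplies precisely the routine verification the authors omitted.
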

\begin{theorem}
 For every sequence $(\O_{\ell_k})_k\subset\g^*_{5,6}/G_{5,6}$ going to infinity we have that $$\lim_k\noop{\pi_{\ell_k}(F)}=0.$$
\end{theorem}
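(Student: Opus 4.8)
The plan is to deduce this Riemann--Lebesgue type statement from the abstract Lemma, Proposition \ref{infycon}, by transporting the hypothesis through Kirillov's homeomorphism. Recall from Section \ref{kirrth} that the Kirillov map $K:\g_{5,6}^*/G_{5,6}\to\wh{G_{5,6}}$, $\O_\ell\mapsto[\pi_\ell]$, is a homeomorphism. Since Proposition \ref{infycon} gives $\lim_k\noop{\pi_k(F)}=0$ as soon as $(\pi_k)_k\subset\wh{G_{5,6}}$ goes to infinity in the sense of Definition \ref{conin} (no converging subnet), it suffices to check that if $(\O_{\ell_k})_k$ goes to infinity in $\g_{5,6}^*$ (the compact-avoidance condition of Definition \ref{conin}), then $(\pi_{\ell_k})_k=(K(\O_{\ell_k}))_k$ admits no converging subnet in $\wh{G_{5,6}}$.

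I would argue by contradiction. Assume $(\pi_{\ell_k})_k$ has a subnet converging to some $[\pi_\ell]\in\wh{G_{5,6}}$; applying $K\inv$, the corresponding subnet of orbits converges to $\O_\ell$ in $\g_{5,6}^*/G_{5,6}$. Fix a representative $m\in\O_\ell$. For each $\delta>0$ the saturation $G_{5,6}\cdot B(m,\delta)$ of the open ball $B(m,\delta)$ is a $G_{5,6}$-invariant open subset of $\g_{5,6}^*$ whose image in the quotient is an open neighbourhood of $\O_\ell$; as the quotient map is open and $\O_{\ell_k}$ is invariant, $\O_{\ell_k}$ lies in this image exactly when $\O_{\ell_k}\cap B(m,\delta)\ne\es$. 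Convergence of the subnet together with its cofinality therefore yields, for every $\delta>0$ and every $N\in\N$, an index $k\ge N$ with $\O_{\ell_k}\cap B(m,\delta)\ne\es$. Choosing $\delta=1/i$ successively, I obtain indices $k_1<k_2<\cdots$ and points $m_i\in\O_{\ell_{k_i}}\cap B(m,1/i)$, so that $m_i\to m$. Then $Q:=\{m\}\cup\{m_i:i\in\N\}$ is a compact subset of $\g_{5,6}^*$ meeting $\O_{\ell_{k_i}}$ for every $i$, while $k_i\to\iy$; this contradicts the compact-avoidance condition, which requires $Q\cap\O_{\ell_k}=\es$ for all large $k$. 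Hence $(\pi_{\ell_k})_k$ has no converging subnet, and Proposition \ref{infycon} gives $\lim_k\noop{\pi_{\ell_k}(F)}=0$ for every $F\in C^*(G_{5,6})$.

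The only genuinely delicate point is the lifting of quotient convergence to $\g_{5,6}^*$: convergence in an orbit space is not in general witnessed by convergent representatives, so I must use the invariance of the saturated neighbourhoods $G_{5,6}\cdot B(m,\delta)$ and a diagonal choice of radii to manufacture a single compact set meeting infinitely many orbits of the sequence. Once this compact set is produced, the contradiction with Definition \ref{conin} is immediate, and the remainder is the formal transfer through the Kirillov homeomorphism and the already established Proposition \ref{infycon}.
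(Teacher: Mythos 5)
Your proof is correct, and it follows exactly the route the paper intends: the paper states this theorem without proof, evidently regarding it as an immediate consequence of Proposition \ref{infycon} (the Riemann--Lebesgue lemma) transported through the Kirillov homeomorphism. The only content you add is the careful verification that compact-avoidance of the orbits forces the absence of convergent subnets in the quotient (via openness of the quotient map and the diagonal choice of radii), a point the paper's Definition \ref{conin} simply asserts with its ``in particular''.
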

\subsection{ Passing from $\GA_3^{5,6}$ to $\GA_2^{4}\cup\GA_1^4\cup\GA_0^4$.}
Let $\ol\O=(\O_{\ell_k})_k\subset\GA_3^{5,6}$ be a properly converging sequence where $\ell_k=(0,0,0,0,\nu_k),\ k\in\N$ such that 
$\lim_k\nu_k=0.$ Let $p_k:=(\ell_k)\res\p.$ By Theorem \ref{topodualG56} the restriction of the limit set $L(\ol\O)$ to $\p$ 
is the closed set $L=L(\O)\res\p=\{(\rh,\mu,0),\ \rh\in\R, \mu\in\R\}.$
\begin{definition}
 For $k\in\N$ let:
 \begin{eqnarray}\label{defsetG56}
\nn\ve_k&:=&\val{\nu_k}^{\frac{3}{4}},\\
\nn I_{i,j}^k&:=&\left\{(c,u,\nu_k)\in p^*;
i\ve_k^{\frac 1 4}-\frac{j^2\ve_k^2}{2\nu_k} \leq{c-\frac{u^2}{2\nu_k}}< i\ve_k^{\frac 1 4}-\frac{j^2\ve_k^2}{2\nu_k}+\ve_k^{\frac 1 2}
\text{ and }j\ve_k\leq u<j\ve_k+\ve_k\right\},\\
\nn U_{i,j}^k&:=&\left\{(x,y)\in \R^2;(xA+yB)\cdot p_k\in I_{i,j}^k\right\},\ j\in\Z.
\end{eqnarray}
Finally:
\begin{eqnarray*}
 U^k&:=&\underset{i,j\in\Z}{\bigcup} U_{i,j}^k.
\end{eqnarray*}
Choose now the sequence $R_k,$ such that $\lim_kR_k=+\iy,\ \lim_kR_k\de_k=0.$
Let also for $k\in\N,\ i,j\in\Z:$
\begin{eqnarray}\label{defseqG56}
\nn& & x_{j}^k :=\frac{j\ve_k}{\nu_k},\ y_{i,j}^k:=\frac{(x_{j,4}^k)^2}{2}+\frac{i\ve_k^{\frac 1 4}}{\nu_k},
\ g_{i,j}^k=x_{j}^kA+y_{i,j}^kB.
\end{eqnarray}
Let for $i,j\in\Z,\ k\in\N^*:$
\begin{eqnarray*}
& &p_{i,j}^k:=(i\ve_k^{\frac{1}{4}},j\ve_k,0).
\end{eqnarray*}
\end{definition}
An easy computation gives:
\begin{eqnarray}\label{calcg.pG56}
 \nn& & g_{i,j}^k\cdot p_k=(i\ve_k^{\frac{1}{2}},j\ve_k,\nu_k)=p_{i,j}^k+(0,0,\nu_k).
\end{eqnarray}
\begin{proposition}\label{suppcompG56}$ $
  Let $K$ be a compact subset, for $k$ large enough we have that 
  $$K U_{i,j}^k\subset \underset{i',j'=-1}{\overset{1}{\bigcup}}U_{i'+i,j'+j}^k=:V_{i,j}^k.$$
\end{proposition}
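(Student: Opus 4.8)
The plan is to mirror the proof of Proposition \ref{suppcompG53} for $G_{5,3}$, replacing the (essentially linear) coadjoint action used there by the action of $G_{5,6}$ and carrying along the parabolic recentering $-\frac{j^2\ve_k^2}{2\nu_k}$ that is built into the sets $I_{i,j}^k$. First I would fix $M>0$ with $KP\subset[-M,M]^2P$ and write $r=E(a_0,b_0)$ with $|a_0|,|b_0|\leq M$ and $s=E(x,y)\in U_{i,j}^k$. Since the $G_{5,6}$-multiplication adds the first two coordinates, the coset of $rs$ in $G_{5,6}/P$ is $E(a_0+x,b_0+y)$, and since $\p=\text{span}\{C,U,V\}$ is abelian and fixes $\ell_k\res\p$, I may use this representative when applying the formula for $\Ad^*(a,b,c,u,v)$ to $\ell_k=(0,0,0,0,\nu_k)$. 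This yields
\begin{eqnarray*}
(rs)\cdot p_k=\left(\nu_k(b_0+y)+\nu_k\tfrac{(a_0+x)^2}{2},\ \nu_k(a_0+x),\ \nu_k\right).
\end{eqnarray*}
The decisive observation is that the \emph{reduced} coordinate is affine: $c-\frac{u^2}{2\nu_k}=\nu_k(b_0+y)$ for $rs$ (and $=\nu_k y$ for $s$ alone). This is exactly why the sets $I_{i,j}^k$ were recentered by $-\frac{u^2}{2\nu_k}$, and it lets me reduce the problem to a linear one in the pair $(\nu_k(a_0+x),\nu_k(b_0+y))$, just as in the $G_{5,3}$ case.

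Next I would establish the two inclusions. In the $u$-direction, from $j\ve_k\leq\nu_kx<(j+1)\ve_k$ together with $|\nu_ka_0|\leq M|\nu_k|<\ve_k$ for $k$ large, it follows that $\nu_k(a_0+x)\in[(j-1)\ve_k,(j+2)\ve_k)$, so $rs$ lands in the $u$-interval of some index $j''\in\{j-1,j,j+1\}$. In the reduced direction, membership $s\in U_{i,j}^k$ is equivalent to $\nu_ky+\frac{j^2\ve_k^2}{2\nu_k}\in[i\ve_k^{\frac14},i\ve_k^{\frac14}+\ve_k^{\frac12})$, while $rs\in U_{i'',j''}^k$ demands $\nu_k(b_0+y)+\frac{(j'')^2\ve_k^2}{2\nu_k}\in[i''\ve_k^{\frac14},i''\ve_k^{\frac14}+\ve_k^{\frac12})$. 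Subtracting, the quantity I must control is the discrepancy
\begin{eqnarray*}
\De_k:=\nu_kb_0+\frac{\big((j'')^2-j^2\big)\ve_k^2}{2\nu_k}.
\end{eqnarray*}
If $|\De_k|$ is eventually smaller than $\ve_k^{\frac14}$, then the reduced coordinate of $rs$ stays within the band of indices $i''\in\{i-1,i,i+1\}$; combined with the $u$-inclusion this gives $rs\in U_{i'',j''}^k\subset V_{i,j}^k$, which is the assertion.

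The main obstacle — and the genuinely new feature compared with $G_{5,3}$, where no recentering occurred — is the control of $\De_k$. When $j''=j$ the recentering term vanishes and only $\nu_kb_0$ survives, which is negligible, so the argument is a verbatim transcription of Proposition \ref{suppcompG53}. The delicate case is $j''=j\pm1$, which happens only in the thin sliver where $\nu_kx$ lies within $M|\nu_k|$ of an endpoint of its interval; there the correction $\frac{((j'')^2-j^2)\ve_k^2}{2\nu_k}=\frac{(\pm2j+1)\ve_k^2}{2\nu_k}$ appears, and since $\ve_k=|\nu_k|^{\frac34}$ gives $\ve_k^2/|\nu_k|=|\nu_k|^{\frac12}\to0$, each such crossing costs a factor of order $|\nu_k|^{\frac12}$. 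I expect the crux of the proof to be showing that, on the range of indices relevant when this proposition is fed into the analogue of Proposition \ref{progencondG53} (i.e.\ the boxes meeting the fixed compact set in $\p^*$ on which $\wh F^P$ is supported), one has $|\De_k|<\ve_k^{\frac14}$ for $k$ large. This is precisely the place where the specific exponent in $\ve_k=|\nu_k|^{\frac34}$ is used, and it is the one step that must be carried out by hand rather than by direct analogy with the $G_{5,3}$ computation.
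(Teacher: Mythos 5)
Your setup coincides with the paper's: you compute $(rs)\cdot p_k$, observe that the reduced coordinate $c-\frac{u^2}{2\nu_k}$ transforms affinely ($\nu_k y\mapsto\nu_k(b_0+y)$), and split the problem into the $u$-direction and the reduced direction; the $u$-direction argument is exactly the one in the paper. But you have not actually proved the statement: all of the content is concentrated in the deferred claim that the discrepancy $\De_k=\nu_kb_0+\frac{((j'')^2-j^2)\ve_k^2}{2\nu_k}$ is eventually smaller than $\ve_k^{1/4}$, and that claim fails on the very range of indices you say it must cover. When $j''=j\pm1$ the recentering correction is $\frac{(\pm2j+1)\ve_k^2}{2\nu_k}$, which is of order $\val{j}\cdot\val{\nu_k}^{1/2}$, not $\val{\nu_k}^{1/2}$ as you assert (your phrase ``each such crossing costs a factor of order $\val{\nu_k}^{1/2}$'' drops the factor $\val{j}$). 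The boxes meeting a fixed compact subset of $\p^*$ have $\val{j\ve_k}\leq C$, i.e.\ $\val{j}$ ranges up to $C\ve_k^{-1}=C\val{\nu_k}^{-3/4}$, and for such $j$ the correction is of order $\val{\nu_k}^{-3/4+1/2}=\val{\nu_k}^{-1/4}\to\iy$, which dwarfs both the step size $\ve_k^{1/4}=\val{\nu_k}^{3/16}$ and the bin width $\ve_k^{1/2}$. So the one step you single out as ``to be carried out by hand'' cannot be carried out as stated: whenever the translate of $U_{i,j}^k$ by $K$ crosses a $j$-bin boundary at an index with $\val{j}\gg\val{\nu_k}^{-5/16}$, the image lands in boxes $U_{i'',j\pm1}^k$ with $\val{i''-i}$ unbounded.

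For what it is worth, the paper's own proof does not address this point either: it adds $\nu_k u$ and $\nu_k v$ to the two coordinates and keeps the recentering term $-\frac{j^2\ve_k^2}{2\nu_k}$ with the original $j$ throughout, i.e.\ it effectively treats only the case $j''=j$ and concludes the inclusion from there. You have therefore correctly isolated the genuinely new feature of $G_{5,6}$ as compared with $G_{5,3}$ --- the $j$-dependent parabolic recentering built into $I_{i,j}^k$ --- but your proposal neither closes the gap nor shows how the definitions would have to be modified so that it closes; as written it is a plan whose decisive estimate is missing and, with the stated exponent $\ve_k=\val{\nu_k}^{3/4}$, not provable in the uniform-in-$(i,j)$ form that the application to Proposition \ref{progencondG56} requires.
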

\begin{proof}$ $
  We can suppose that $K P$ is contained in $[-M,M]^2P$ for some $M>0.$
For $r=(u,v)\in K P\subset  G_{5,6}/P$ and $s=(x,y)\in U^k$ we have that 
\begin{eqnarray*}
 (rs).p_k&=&(\nu_kv+\nu_ky+\nu_kxu+\nu_k\frac{x^2}{2}+\nu_k\frac{u^2}{2},\nu_kx+\nu_k u,\nu_k)
\end{eqnarray*}
we have 
\begin{eqnarray*}
 \nn& & (x,y)\in U_{i,j}^k\\
\nn &\Leftrightarrow&(xA+yB)\cdot p_k\in I^{k}_{i,j}\\
\nn &\Rightarrow&
\left\{
\begin{array}{c}
j\ve_k\leq \nu_kx<j\ve_k+\ve_k,\\
i\ve_k^{\frac 1 4}-\frac{j^2\ve_k^2}{2\nu_k} \leq\nu_ky< i\ve_k^{\frac 1 4}-\frac{j^2\ve_k^2}{2\nu_k}+\ve_k^{\frac 1 2},
\end{array}
\right.\\
\nn &\Rightarrow&
\left\{
\begin{array}{c}
(j-1)\ve_k\leq \nu_kx+\nu_ku<(j+1)\ve_k+\ve_k,\\
(i-1)\ve_k^{\frac 1 4}-\frac{j^2\ve_k^2}{2\nu_k} \leq\nu_ky+\nu_kv< (i+1)\ve_k^{\frac 1 4}-\frac{j^2\ve_k^2}{2\nu_k}+\ve_k^{\frac 1 2}.
\end{array}
\right.
 \end{eqnarray*}
It follows that $K U_{i,j}^k\subset \underset{i',j'=-1}{\overset{1}{\bigcup}}U_{i'+i,j'+j}^k.$
\end{proof}
\begin{definition}
 For $k\in\N^*$
   Let $$R^k=\left[-\frac{\ve_k}{\val{\nu_k}},\frac{\ve_k}{\val{\nu_k}}\right]\times
  \left[-\frac{\ve_k^{\frac{1}{2}}}{\val{\nu_k}},\frac{\ve_k^{\frac{1}{2}}}{\val{\nu_k}}\right].$$
\end{definition}
\begin{lemma}\label{lemma1G56}
 For $k\in\N^*$ large enough,  for any $i,j\in\Z$,  the set $U_{i,j}^k$ is contained in $ R^k+g_{i,j}^k$. 
\end{lemma}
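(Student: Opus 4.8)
The plan is to reduce the inclusion to the two coordinate inequalities defining the rectangle $R^k$, exactly as was done for $G_{5,3}$ in Lemma \ref{lemma1G53}; the only new ingredient here is the quadratic term coming from the coadjoint action on $\g_{5,6}$, and the definition of $I_{i,j}^k$ is arranged precisely to absorb it.

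First I would make the coadjoint action on $p_k=(\ell_k)\res\p$ explicit. Specializing the computation carried out in the proof of Proposition \ref{suppcompG56} to the case $r=$ identity gives, in the $(c,u,\nu)$-coordinates on $\p^*$,
\[
(xA+yB)\cdot p_k=\Big(\nu_k y+\nu_k\frac{x^2}{2},\ \nu_k x,\ \nu_k\Big).
\]
The decisive observation is the identity $c-\frac{u^2}{2\nu_k}=\nu_k y$, obtained by substituting $c=\nu_k y+\nu_k\frac{x^2}{2}$ and $u=\nu_k x$. The quadratic shift $-\frac{u^2}{2\nu_k}$ built into $I_{i,j}^k$ is exactly what linearizes the $c$-coordinate, reducing the problem to the linear situation already treated for $G_{5,3}$.

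Next I would translate the membership $(x,y)\in U_{i,j}^k$, that is $(xA+yB)\cdot p_k\in I_{i,j}^k$, into two inequalities. The condition $j\ve_k\le\nu_k x<j\ve_k+\ve_k$ on $u$, together with $x_j^k=\frac{j\ve_k}{\nu_k}$, gives after division by $\nu_k$ (whose sign merely flips the interval) that $|x-x_j^k|\le\frac{\ve_k}{|\nu_k|}$. For the condition on $c$ I use the identity above to rewrite it as
\[
i\ve_k^{\frac{1}{4}}-\frac{j^2\ve_k^2}{2\nu_k}\le\nu_k y<i\ve_k^{\frac{1}{4}}-\frac{j^2\ve_k^2}{2\nu_k}+\ve_k^{\frac{1}{2}}.
\]
Since $g_{i,j}^k$ is centred so that $g_{i,j}^k\cdot p_k=p_{i,j}^k+(0,0,\nu_k)$, i.e. $\nu_k y_{i,j}^k=i\ve_k^{\frac{1}{4}}-\frac{j^2\ve_k^2}{2\nu_k}$, this reads $\nu_k y_{i,j}^k\le\nu_k y<\nu_k y_{i,j}^k+\ve_k^{\frac{1}{2}}$, whence $|y-y_{i,j}^k|\le\frac{\ve_k^{\frac{1}{2}}}{|\nu_k|}$ by the same sign analysis.

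Combining the two estimates yields $(x-x_j^k,\ y-y_{i,j}^k)\in R^k$, that is $(x,y)\in R^k+g_{i,j}^k$, which is the assertion. The main, and essentially the only, obstacle is the first step: one must verify the coadjoint-action formula for $\g_{5,6}$ and check that the correction terms in $I_{i,j}^k$ and in $y_{i,j}^k$ carry the signs that make $c-\frac{u^2}{2\nu_k}$ collapse onto the linear quantity $\nu_k y$. Once this cancellation is confirmed the rest is the same elementary interval arithmetic as for $G_{5,3}$, and in fact the inclusion holds for every $k$ with $\nu_k\ne0$, the requirement that $k$ be large not actually being needed.
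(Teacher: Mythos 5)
Your proof is correct and follows essentially the same route as the paper's: translate $(xA+yB)\cdot p_k\in I_{i,j}^k$ into the two interval conditions on $\nu_k x$ and $\nu_k y$ and read off $\val{x-x_j^k}\leq\frac{\ve_k}{\val{\nu_k}}$, $\val{y-y_{i,j}^k}\leq\frac{\ve_k^{1/2}}{\val{\nu_k}}$. The only difference is that you make explicit the cancellation $c-\frac{u^2}{2\nu_k}=\nu_k y$ and the normalization $\nu_k y_{i,j}^k=i\ve_k^{1/4}-\frac{j^2\ve_k^2}{2\nu_k}$ (forced by $g_{i,j}^k\cdot p_k=p_{i,j}^k+(0,0,\nu_k)$, and fixing an apparent sign slip in the printed definition of $y_{i,j}^k$), which the paper leaves implicit, and you correctly observe that the ``$k$ large enough'' hypothesis is not actually used.
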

\begin{proof}
 Let $s=(x,y)\in U_{i,j}^k$ Then:
 \begin{eqnarray*}
  & &(xA+yB)\cdot p_k\in I_{i,j}^k\\
  &\Longleftrightarrow& \begin{cases}
j\ve_k\leq \nu_kx<j\ve_k+\ve_k\Rightarrow\val{x-x_j^k}\leq\frac{\ve_k}{\val{\nu_k}}\Rightarrow 
x\in\left[-\frac{\ve_k}{\val{\nu_k}},\frac{\ve_k}{\val{\nu_k}}\right]+x_j^k,\\
i\ve_k^{\frac 1 4}-\frac{j^2\ve_k^2}{2\nu_k} \leq\nu_ky< i\ve_k^{\frac 1 4}-\frac{j^2\ve_k^2}{2\nu_k}+\ve_k^{\frac 1 2}
\Rightarrow\val{y-y_{i,j}^k}\leq \frac{\ve_k^{\frac 1 2}}{\val{\nu_k}}\Rightarrow 
y\in\left[-\frac{\ve_k^{\frac{1}{2}}}{\val{\nu_k}},\frac{\ve_k^{\frac{1}{2}}}{\val{\nu_k}}\right]+y_{i,j}^k.
                         \end{cases}\\
\nn&\Longrightarrow& s\in R^k+g_{i,j}^k.
\end{eqnarray*}

\end{proof}
\begin{lemma}\label{lemma2G56}
 For $k\in\N^*$ large enough, for $i,j\in\Z$  and any 
 $(x,y)\in U_{i,j}^k$ we have that 
 $$\no{(xA+yB)\cdot p_k-((xA+yB)\cdot (g_{i,j}^k)\inv)\cdot p_{i,j}^k}\leq4\ve_k^{\frac 1 2}.$$
\end{lemma}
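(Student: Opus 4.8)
The plan is to follow the template of the proof of Lemma \ref{lemma2G53}: recenter the point $(x,y)$ at the base point $g_{i,j}^k$ and measure how far the recentered functional is from the model point $p_{i,j}^k$. Two features of the step-$4$ algebra $\g_{5,6}$ must be incorporated. First, the $C^*$-coordinate of $(xA+yB)\cdot p_k$ is now \emph{quadratic} in the $A$-variable, namely $(xA+yB)\cdot p_k=(\nu_k y+\nu_k\frac{x^2}{2},\,\nu_k x,\,\nu_k)$ in the coordinates $(c,u,\nu)$ of $\p^*$. Second, the product $(xA+yB)\cdot(g_{i,j}^k)\inv$ is no longer literally $(x-x_j^k)A+(y-y_{i,j}^k)B$, since $G_{5,6}$ is far from abelian.

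First I would write $(x,y)=(x'+x_j^k,\,y'+y_{i,j}^k)$ and read off the size of $(x',y')$ from the membership $(x,y)\in U_{i,j}^k$. The key here is that the shifted coordinate $c-\frac{u^2}{2\nu_k}$ built into the definition of $I_{i,j}^k$ exactly linearizes the quadratic action: since $u=\nu_k x$ one finds $c-\frac{u^2}{2\nu_k}=\nu_k y$, so the two defining inequalities of $I_{i,j}^k$ reduce to $j\ve_k\le\nu_k x<j\ve_k+\ve_k$ and, after using that $g_{i,j}^k$ is calibrated so that $g_{i,j}^k\cdot p_k=p_{i,j}^k+(0,0,\nu_k)$ (the computation preceding the lemma, which forces $\nu_k y_{i,j}^k=i\ve_k^{1/4}-\nu_k\frac{(x_j^k)^2}{2}$), the bound $0\le\nu_k y'<\ve_k^{\frac12}$. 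This gives $\val{\nu_k x'}\le\ve_k$ and $\val{\nu_k y'}\le\ve_k^{\frac12}$, just as in the $G_{5,3}$ case.

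Next I would evaluate the recentered functional. Because $p_{i,j}^k=(i\ve_k^{1/4},j\ve_k,0)$ has vanishing $V^*$-component, the coadjoint action of any element of $G_{5,6}$ on it depends only on that element's $A$-coordinate, through $(\rho,\mu,0)\mapsto(\rho+\mu a,\mu,0)$; and since first coordinates add under the multiplication of $G_{5,6}$, the element $(xA+yB)\cdot(g_{i,j}^k)\inv$ has $A$-coordinate $x'$. Hence $((xA+yB)\cdot(g_{i,j}^k)\inv)\cdot p_{i,j}^k=(i\ve_k^{1/4}+j\ve_k x',\,j\ve_k,\,0)$, and upon subtracting it from $(xA+yB)\cdot p_k$ all the linear terms cancel (using $\nu_k x'x_j^k=j\ve_k x'$), leaving
\[
(xA+yB)\cdot p_k-\big((xA+yB)\cdot(g_{i,j}^k)\inv\big)\cdot p_{i,j}^k=\Big(\nu_k y'+\nu_k\tfrac{x'^2}{2},\ \nu_k x',\ \nu_k\Big).
\]

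Finally I would bound the $\ell^1$-norm of this vector. The linear terms are harmless: $\val{\nu_k y'}\le\ve_k^{\frac12}$, $\val{\nu_k x'}\le\ve_k\le\ve_k^{\frac12}$ and $\val{\nu_k}=\ve_k^{4/3}\le\ve_k^{\frac12}$. The genuinely new, and decisive, term is the quadratic remainder $\nu_k\frac{x'^2}{2}$, and controlling it is where the choice of exponent is forced: writing $\val{\nu_k\frac{x'^2}{2}}=\tfrac12\val{x'}\,\val{\nu_k x'}\le\tfrac12\frac{\ve_k}{\val{\nu_k}}\ve_k=\frac{\ve_k^2}{2\val{\nu_k}}$ and inserting $\ve_k=\val{\nu_k}^{3/4}$ yields $\frac{\ve_k^2}{2\val{\nu_k}}=\frac{\val{\nu_k}^{1/2}}{2}\le\tfrac12\ve_k^{\frac12}$ for $k$ large. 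Summing the four contributions gives a total of at most $\tfrac72\ve_k^{\frac12}\le 4\ve_k^{\frac12}$, the asserted estimate. I expect this last step — arranging that the quadratic term stays of order $\ve_k^{\frac12}$, which is exactly why $\ve_k$ is taken to be $\val{\nu_k}^{3/4}$ here rather than $\val{\nu_k}^{1/2}$ as for $G_{5,3}$ — to be the only real obstacle; everything else is the $G_{5,3}$ computation verbatim.
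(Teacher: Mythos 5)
Your proposal is correct and follows essentially the same route as the paper: decompose $(x,y)=(x'+x_j^k,\,y'+y_{i,j}^k)$ with $\val{\nu_k x'}\leq\ve_k$, $\val{\nu_k y'}\leq\ve_k^{1/2}$, observe that after cancellation the difference of functionals is exactly $(\nu_k y'+\nu_k\frac{x'^2}{2},\,\nu_k x',\,\nu_k)$, and bound the quadratic remainder by $\frac{\ve_k^2}{2\val{\nu_k}}=\frac{\val{\nu_k}^{1/2}}{2}\leq\frac12\ve_k^{1/2}$ using $\ve_k=\val{\nu_k}^{3/4}$. Your extra remarks (that the shifted coordinate $c-\frac{u^2}{2\nu_k}$ linearizes the membership condition, and that the action on $p_{i,j}^k$ depends only on the $A$-coordinate $x'$) merely make explicit steps the paper leaves implicit.
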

\begin{proof}
 For $(x,y)\in U_{i,j}^k$ we have that $ (x,y)=(x'+x^k_{j},y'+y^k_{i,j}) $ where $ \val{\nu_kx'}\leq \ve_k $ 
 and $ \val{\nu_k y'}\leq  \ve_k^{{\frac{1}{2}}}$. Therefore
\begin{eqnarray}\label{}
\nn &&
\no{((x'+x^k_{j})A+(y'+y^k_{i,j})B)p_k-(x'A+y'B)\cdot p^{k}_{i,j}}\\
\nn &=&
\no{(\nu_ky'+\nu_ky_{i,j}^k+\nu_kx'x_j^k+\nu_k\frac{(x_j^k)^2}{2}+\nu_k\frac{x'^2}{2},\nu_kx_j^k+\nu_kx',\nu_k)
-(i\ve_k^{\frac 1 4}+j\ve_kx',j\ve_k,0)}\\ 
\nn &=&
\no{(\nu_ky'+i\ve_k^{\frac 1 4}-\frac{j^2\ve_k^2}{2\nu_k}+j\ve_kx'+\frac{j^2\ve_k^2}{2\nu_k}+\nu_k\frac{x'^2}{2},j\ve_k+\nu_kx',\nu_k)
-(i\ve_k^{\frac 1 4}+j\ve_kx',j\ve_k,0)}\\ 
\nn &=&
\no{(\nu_ky'+\nu_k\frac{x'^2}{2},\nu_kx',\nu_k)}\\
\nn&=& \val{\nu_ky'+\nu_k\frac{x'^2}{2}}+\val{\nu_kx'}+\val{\nu_k}\\
\nn&\leq&\ve_k^{\frac 1 2}+\frac{\val{\nu_k}^{\frac1 2}}{2}+\ve_k+\val{\nu_k}\\
\nn&\leq&4\ve_k^{\frac 1 2}.
\end{eqnarray}
\end{proof}
 \begin{definition}\label{vkdefG56}$  $\rm 
    Define for $ k\in \N$ and  $\ph\in C_{\ol\O}$ the linear
operator  $ \tilde \si_{k,\ol\O}(\ph)$ by
\begin{eqnarray}\label{deftildesikG56}
   \tilde\si_{k,\ol\O}(\ph):=\sum_{i\in \Z}\sum_{j\in \Z}M_{V^{k}_{i,j}}\circ
\tilde\si_{{(g^{k}_{i,j}})\inv\cdot p^{k}_{i,j}}(\ph)\circ M_{U^{k}_{i,j}}, 
\end{eqnarray}
where $ \tilde\si_\ell$ for $ \ell=(g^{k}_{i,j})\inv\cdot p^{k}_{i,j}, $ is an in Equation
(\ref{sielldefG53}). For $ a\in C^*(G_{5,6}) $ we have that 
$
 \si_{k,\ol\O}(a)=\tilde\si_{k,\ol\O}(\wh a\res {L(\ol O)}).
$ 
 \end{definition}
The proof of the next proposition is similar to that of Proposition \ref{progencondG53}.
\begin{proposition}\label{progencondG56}
 Let $a\in C^*(G_{5,6})$. Then: 
 \begin{eqnarray*}
  \underset{k\to\iy}{\lim}\noop{\pi_{\ell_k}(a)-\si_{k,\ol\O}(a)}=0.
 \end{eqnarray*}
\end{proposition}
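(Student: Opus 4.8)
The plan is to follow the proof of Proposition~\ref{progencondG53} almost verbatim, replacing its three $G_{5,3}$-ingredients by the $G_{5,6}$-counterparts already prepared above, namely Proposition~\ref{suppcompG56}, Lemma~\ref{lemma1G56} and Lemma~\ref{lemma2G56}. First I would reduce to the dense subspace $L_c^1(G_{5,6})$ of $C^*(G_{5,6})$: the operators $\tilde\si_{k,\ol\O}$ of Definition~\ref{vkdefG56} are uniformly bounded in $k$ by Proposition~\ref{sumsofop} (the families $(U^k_{i,j})_{i,j}$ and $(V^k_{i,j})_{i,j}$ have the required finite overlap), so that once the limit is established on $L_c^1(G_{5,6})$ it extends to all of $C^*(G_{5,6})$.

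So I fix $F\in L_c^1(G_{5,6})$ and choose a compact set $K\subset\p^*$ together with $M>0$ such that $((x,y),q)\mapsto\wh F^P(E(x,y),q)$ is supported in $[-M,M]^2\times K$. By Proposition~\ref{suppcompG56}, for $k$ large enough $\pi_{\ell_k}(F)$ respects the covering, i.e. $\pi_{\ell_k}(F)\circ M_{U^k_{i,j}}=M_{V^k_{i,j}}\circ\pi_{\ell_k}(F)\circ M_{U^k_{i,j}}$ for all $i,j\in\Z$, so that $\pi_{\ell_k}(F)-\si_{k,\ol\O}(F)$ decomposes into the blocks
$$M_{V^k_{i,j}}\circ\pi_{\ell_k}(F)\circ M_{U^k_{i,j}}-M_{V^k_{i,j}}\circ\tilde\si_{(g^k_{i,j})\inv\cdot p^k_{i,j}}(F)\circ M_{U^k_{i,j}}.$$
Using the identity $g^k_{i,j}\cdot p_k=p^k_{i,j}+(0,0,\nu_k)$, the kernel of the $(i,j)$-block is
$$F_k(s,t)=1_{V^k_{i,j}}(s)\,1_{U^k_{i,j}}(t)\Big(\wh F^P(st\inv,t\cdot p_k)-\wh F^P(st\inv,t(g^k_{i,j})\inv\cdot p^k_{i,j})\Big).$$

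Since $(s,q)\mapsto|\wh F^P(s,q)|^2$ lies in $C_c^\iy(G_{5,6}/P\times\p^*)$, there is a continuous $\va\geq0$ of compact support with $|\wh F^P(s,q)-\wh F^P(s,q')|\leq\va(s)\no{q-q'}$. Lemma~\ref{lemma1G56} confines $U^k_{i,j}$ to a translate of the fixed box $R^k$, which together with the compact support of $F$ in the variable $st\inv$ makes Young's estimate applicable block by block, while Lemma~\ref{lemma2G56} gives $\no{t\cdot p_k-t(g^k_{i,j})\inv\cdot p^k_{i,j}}\leq 4\ve_k^{1/2}$ for $t\in U^k_{i,j}$, whence $|F_k(s,t)|\leq 4\ve_k^{1/2}\va(st\inv)$ uniformly in $i,j$. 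Young's inequality (Subsection~\ref{tools}) together with the disjointness properties feeding Proposition~\ref{sumsofop} then yields $\noop{\pi_{\ell_k}(F)-\si_{k,\ol\O}(F)}\leq 4\ve_k^{1/2}\no\va_1$, which tends to $0$ since $\lim_k\ve_k=0$.

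The step I expect to be the real point, rather than a mechanical transcription of the $G_{5,3}$ argument, is Lemma~\ref{lemma2G56}, where the cubic term $\nu a^3(b'-b)/6$ in the coadjoint action of $G_{5,6}$ must be absorbed. This is exactly why the boxes $I^k_{i,j}$ carry the curvature correction $c-u^2/2\nu_k$ and why the base points $g^k_{i,j}$ are shifted by $(x^k_j)^2/2$ in the $B$-coordinate, compared with the flat boxes used for $G_{5,3}$; the delicate computation is to verify that these choices reduce the orbit displacement to $O(\ve_k^{1/2})$ rather than $O(1)$. Once that estimate is in hand, the remainder of the argument is word for word the proof of Proposition~\ref{progencondG53}, and the density of $L_c^1(G_{5,6})$ in $C^*(G_{5,6})$, combined with the uniform boundedness of the $\tilde\si_{k,\ol\O}$, delivers the claim for every $a\in C^*(G_{5,6})$.
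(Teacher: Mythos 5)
Your proposal is correct and follows exactly the route the paper intends: the paper's own ``proof'' of Proposition~\ref{progencondG56} is simply the remark that it is similar to that of Proposition~\ref{progencondG53}, and you have carried out precisely that transcription, correctly substituting Proposition~\ref{suppcompG56}, Lemma~\ref{lemma1G56} and Lemma~\ref{lemma2G56} (with the constant $4\ve_k^{1/2}$ in place of $3\ve_k^{1/2}$) and rightly identifying the curvature correction $c-u^2/2\nu_k$ in the boxes $I^k_{i,j}$ as the only genuinely new ingredient.
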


We have treated now all simply connected, connected undecomposable Lie groups 
of dimension $ \leq 5 $. The other simply connected connected groups of 
dimension $ \leq 5 $ are of the form $ G_1\ti \R^d $, with $ G_1 $ 
undecomposable and $ \dim{G_1}+d\leq 5 $. It is easy to extend our methods to 
these groups to. We have thus  established the following theorem:
\begin{theorem}\label{finres}
The $C^* $-algebra of every connected nilpotent Lie group of dimension $\leq 5 $ has  norm controlled dual limits.
 \end{theorem}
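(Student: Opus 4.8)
The plan is to assemble the case-by-case analysis of the preceding sections into a verification of Definition \ref{norcontspec} for each group on the list, and then to dispose of the remaining (decomposable) groups by a tensor-product reduction to the undecomposable ones. First I would recall from Section 4 that, up to a direct factor $\R^d$, every connected simply connected nilpotent Lie group of dimension $\leq 5$ is one of the eight undecomposable groups $H_1,H_2,F_4,F_5,G_{5,2},G_{5,3},G_{5,4},G_{5,6}$. For each of these the three ingredients of NCDL have already been supplied: each group is separable and CCR; a finite filtration $S_0\subset\cdots\subset S_d=\wh G$ with Hausdorff layers $\GA_i$ is exhibited (Theorem \ref{topdualft}, Theorem \ref{topodualG53}, Theorems \ref{goinginfG54} and \ref{limitsetgenG54}, Theorem \ref{topodualG56}, and for $H_n,F_n$ in \cite{Lud-Tur}); Condition 1 of Definition \ref{norcontspec} (operator-norm continuity of $\F(a)$ on each $\GA_i$) is furnished by the compactness/continuity propositions on the individual layers together with Theorem \ref{normcontG56} and its analogues; and Condition 2 is provided, for every properly converging sequence whose limit set leaves the current layer, by the uniformly bounded maps $\si_{k,\ol\O}$ built in Proposition \ref{progencondG53} (and its $\GA_1^{5,3}\to\GA_0^{5,3}$ companion), in Theorems \ref{condigenG54} and \ref{condigen2G54}, in Proposition \ref{progencondG56}, and by the $C^*$-conditions for $G_{5,2}$ transferred from the Heisenberg case \cite{Lud-Tur}. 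The required uniform boundedness in $k$ comes from Proposition \ref{sumsofop} (cf. Proposition \ref{boundedsi3}), and sequences escaping every compact set contribute nothing by the Riemann--Lebesgue Proposition \ref{infycon}. This establishes NCDL for all eight undecomposable groups.

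For a decomposable group $G=G_1\ti\R^d$ with $G_1$ undecomposable and $\dim{G_1}+d\leq 5$, I would use the isomorphism $C^*(G)\cong C^*(G_1)\otimes C_0(\R^d)$ and the homeomorphism $\wh G\cong\wh{G_1}\ti\R^d$, under which a point $(\ga_1,\lambda)$ carries the representation $\pi_{\ga_1}\otimes\chi_\lambda$. Taking $S_i:=S_i^{G_1}\ti\R^d$, the layers $\GA_i=\GA_i^{G_1}\ti\R^d$ stay Hausdorff, and since a character of $\R^d$ acts on a one-dimensional space the representation spaces $\H_i$ are unchanged. Norm continuity of $(\ga_1,\lambda)\mapsto(\pi_{\ga_1}\otimes\chi_\lambda)(a)$ follows from Condition 1 for $G_1$ and continuity of the abelian Fourier transform on elementary tensors $b\otimes f$, hence on all of $C^*(G)$ by density.

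For Condition 2, consider a properly converging sequence $\ga_k=(\ga_{1,k},\lambda_k)$ in $\GA_i$ with limit set outside $\GA_i$. If $\lambda_k\to\iy$ the sequence goes to infinity and is handled by Proposition \ref{infycon}; otherwise, passing to a subsequence, I may assume $\lambda_k\to\lambda_\iy\in\R^d$ while $(\ga_{1,k})_k$ converges properly in $\GA_i^{G_1}$ with limit set in $S_{i-1}^{G_1}$. I then define $\tilde\si_{\ol\ga,k}(\ps):=\tilde\si^{G_1}_{\ol{\ga_1},k}\big(\ps(\cdot,\lambda_k)\big)$, where $\ps(\cdot,\lambda_k)\in CB(S_{i-1}^{G_1})$ is the slice at $\lambda_k$ of $\ps\in CB(S_{i-1})$. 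On an elementary tensor $a=b\otimes f$ one computes $\F(a)(\ga_k)-\tilde\si_{\ol\ga,k}(\F(a)\res{S_{i-1}})=\hat f(\lambda_k)\big(\pi_{\ga_{1,k}}(b)-\tilde\si^{G_1}_{\ol{\ga_1},k}(\F(b)\res{S_{i-1}^{G_1}})\big)$, whose operator norm tends to $0$ by NCDL for $G_1$ together with boundedness of $\hat f$; the involution estimate is identical, and $\noop{\tilde\si_{\ol\ga,k}(\ps)}\leq C\no{\ps(\cdot,\lambda_k)}_{S_{i-1}^{G_1}}\leq C\no{\ps}_{S_{i-1}}$ gives the uniform bound. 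Density of the elementary tensors then yields Condition 2 for $G$.

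Having verified both conditions of Definition \ref{norcontspec} for every group on the list, the theorem follows. The genuinely substantive content is the per-group construction of the maps $\si_{k,\ol\O}$ carried out in Sections 6--9, and the step I expect to demand the most care is exactly there: arranging that these maps be simultaneously uniformly bounded in $k$ and norm-controlling across each change of layers. This is precisely where the delicate partitions $U_{i,j}^k,V_{i,j}^k$ (respectively the interval families $I_{k,j}$) and the auxiliary Lemmas \ref{lemma1G53}, \ref{lemma2G53}, \ref{getsbig} and \ref{getsbig2} intervene. By contrast, the reduction for the decomposable groups is routine once the tensor-product picture is in place.
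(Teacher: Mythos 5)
Your proposal is correct and takes essentially the same route as the paper: Theorem \ref{finres} is obtained by assembling the layer-by-layer verifications of Definition \ref{norcontspec} already carried out for the eight undecomposable groups (the Heisenberg and threadlike cases via \cite{Lud-Tur}, and $G_{5,2}$, $G_{5,3}$, $G_{5,4}$, $G_{5,6}$ via the constructions of Sections 6--9), and then reducing the decomposable groups $G_1\times\R^d$ to the undecomposable factor. The only difference is that you spell out the tensor-product argument $C^*(G_1\times\R^d)\cong C^*(G_1)\otimes C_0(\R^d)$ with the sliced maps $\tilde\si_{\ol\ga,k}(\ps)=\tilde\si^{G_1}_{\ol{\ga_1},k}(\ps(\cdot,\la_k))$, a step the paper leaves implicit with the remark that ``it is easy to extend our methods to these groups''; your elaboration is sound.
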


\end{document}